\newcommand{\mA}{\mathbf{A}}
\newcommand{\mI}{\mathbf{I}}
\newcommand{\mW}{\mathbf{W}}
\newcommand{\va}{\mathbf{a}}
\newcommand{\vb}{\mathbf{b}}
\newcommand{\ve}{\mathbf{e}}
\newcommand{\vr}{\mathbf{r}}
\newcommand{\vs}{\mathbf{s}}
\newcommand{\vv}{\mathbf{v}}
\newcommand{\vx}{\mathbf{x}}
\newcommand{\vz}{\mathbf{z}}
\newcommand{\Rmbb}{\mathbb{R}}
\providecommand{\oline}[1]{\mkern 1.5mu\overline{\mkern-1.5mu#1}}
\renewcommand{\hbar}{\oline{h}}
\newcommand{\norm}[1]{\left\| #1 \right\|}
\newcommand{\R}{\Rmbb}
\newtheorem{theorem}{Theorem}[section]
\newtheorem{lemma}[theorem]{Lemma}
\newtheorem{corollary}[theorem]{Corollary}
\newtheorem{definition}[theorem]{Definition}
\newtheorem{remark}[theorem]{Remark}
\newtheorem{prop}[theorem]{Proposition}
\newcommand{\xinit}{\vx_0}
\newcommand{\xk}{\vx_k}
\newcommand{\xkpo}{\vx_{k+1}}
\newcommand{\xopt}{\vx^\star}
\newcommand{\xkmo}{\vx_{k-1}}
\newcommand{\ek}{\ve_k}
\newcommand{\ekpo}{\ve_{k+1}}
\newcommand{\smin}{\sigma_{q-\beta, \mathrm{min}}^2}
\newcommand{\smax}{\sigma_{\mathrm{max}}^2}
\newcommand{\specificthanks}[1]{\@fnsymbol{#1}}
\let\svthefootnote\thefootnote
\newcommand\freefootnote[1]{
  \let\thefootnote\relax
  \footnotetext{#1}
  \let\thefootnote\svthefootnote
}
\author[1]{Lu Cheng}
\author[1]{Benjamin Jarman}
\author[1]{Deanna Needell}
\author[2]{Elizaveta Rebrova}
\affil[1]{Department of Mathematics, University of California, Los Angeles}
\affil[2]{Department of Operations Research and Financial Engineering, Princeton University}
\begin{document}
\title{On Block Accelerations of Quantile Randomized Kaczmarz for Corrupted Systems of Linear Equations}

\date{\vspace{-5ex}}
\maketitle

\abstract{

 With the growth of large data as well as large-scale learning tasks, the need for efficient and robust linear system solvers is greater than ever. The randomized Kaczmarz method (RK) and similar stochastic iterative methods have received considerable recent attention due to their efficient implementation and memory footprint. 
These methods can tolerate streaming data, accessing only part of the data at a time, and can also approximate the least squares solution even if the system is affected by noise. However, when data is instead affected by large (possibly adversarial) corruptions, these methods fail to converge, as corrupted data points draw iterates far from the true solution. A recently proposed solution to this is the QuantileRK method, which avoids harmful corrupted data by exploring the space carefully as the method iterates. The exploration component requires the computation of quantiles of large samples from the system and is computationally much heavier than the subsequent iteration update. 

In this paper, we propose an approach that better uses the information obtained during exploration by incorporating an averaged version of the block Kaczmarz method. This significantly speeds up convergence, while still allowing for a constant fraction of the equations to be arbitrarily corrupted.  We provide theoretical convergence guarantees as well as experimental supporting evidence. 
We also demonstrate that the classical projection-based block Kaczmarz method cannot be robust to sparse adversarial corruptions, but rather the blocking has to be carried out by averaging one-dimensional projections.}
\section{Introduction}
\freefootnote{BJ and DN were partially supported by NSF DMS 2011140; BJ, DN, and ER were partially supported by NSF DMS 2108479.}

Let $\mA \in \mathbb{R}^{m \times n}$, $\vb \in \mathbb{R}^m$, and suppose we wish to find $\vx \in \mathbb{R}^n$ such that $\mA \vx = \vb$. Such linear systems are ubiquitous across applied mathematics and the sciences, arising in contexts ranging from medical imaging \cite{natterer, hounsfield_CAT} to machine learning \cite{bottou2010sgd}, sensor networks \cite{sensors}, and more. A common and widely studied approach is to seek the least squares solution $\vx_{\mathrm{LS}} = \operatorname{argmin}\norm{\mA \vx - \vb}$, for which many methods have been devised. 

In this paper, we consider the related problem of trying to solve a consistent system $\mA \vx = \vb^t$, where $\mA$ is of full rank, and whose solution is $\xopt$. Here, however, instead of observing the \emph{true} right hand side $\vb^t$ one observes a \emph{corrupted} version, $\vb = \vb^t + \vb^c$, where $\vb^c$ represents a vector of corruptions. In this setting, $\vx_{\mathrm{LS}}$ may be far from $\xopt$, rendering least squares solvers unsuitable. Frequently, such systems are highly overdetermined, with $m \gg n$, for example in settings where one has many more measurements than covariates. In this case, it is reasonable to hope to recover $\xopt$ as long as a sufficiently small fraction of rows are corrupted. Indeed, we assume that corruptions may be of arbitrary size and location, but affect only some fraction $\norm{\vb^c}_{0}/m := \beta \in [0,1)$ of data points. We refer to a row with a corrupted right hand side entry as a \emph{corrupted row}.

This model covers a wide variety of scenarios in which data may suffer corruptions during collection, transmission, storage, or otherwise. As one example, a frequent setting in which overdetermined linear systems appear is that of computerized tomography: in this case, each row of the system represents the absorption of a single X-ray beam through a medium, and solving the system recovers an image of said medium. A small number of beams malfunctioning may lead to catastrophic errors of arbitrary size in the resulting data, but as long as the number of such errors is a small one may still hope to recover the underlying solution to the uncorrupted system. Similar situations may arise in sensor networks from malfunctioning sensors, or error correcting codes from transmission errors. Note that typical methods for the least squares problem are unsuitable in this setting, as with arbitrarily large corruptions the least squares solution $\vx_{\mathrm{LS}}$ may be far from $\xopt$ (even if $\beta$ is small); this is contrary to the widely-studied \emph{noisy} setting, in which one assumes that every data point may be damaged by some small amount of noise, but that the least squares solution is still an accurate estimation of the solution.

This sparse corruption model is well-studied within the error-correction and compressed sensing literature: see \cite{candes2005decoding, eldar2012compressed, foucart2013mathematical}. However, such methods often require loading the entire system into memory; a requirement that is frequently impractical or impossible in settings where the system is large-scale, such as those systems arising in medical imaging applications \cite{hounsfield_CAT}. Recent works \cite{HadNeeCorr18, haddock2020quantilebased, Steinerberger2021QuantileBasedRK} have introduced novel approaches that in fact require only loading small portions (even single rows) of the system into memory at any time, whilst achieving linear convergence even in the presence of large -- or adversarially located -- corruptions.

In this work, we introduce a new iterative solver for corrupted linear systems, QuantileABK, building upon the averaged block Kaczmarz method introduced in \cite{Necoara2019FasterRB} and the quantile-based variant of randomized Kaczmarz, QuantileRK, introduced in \cite{haddock2021greed}. As with many iterative methods in the Kaczmarz family, QuantileABK relies on \emph{residual} information to determine the step size. The residual at the iterate $\vx_k$ is the vector of distances from $\vx_k$ to the hyperplanes defined by the rows of the matrix $\mA$, that is, $\vb - \mA \vx_k$. The standard randomized Kaczmarz method, on a consistent uncorrupted system, makes steps in the directions of projections to the individual hyperplanes of length equal to the corresponding residual entry. The underlying idea of the QuantileRK method is that large residual components suggest (a) potential corruptions and (b) large and potentially unstable next iteration steps. So, statistics of the absolute values of the residual entries are used to select trustworthy directions and only use them. We give more detailed backgrounds to each of the aforementioned prior methods in \cref{sec:background}. An important inefficiency of QuantileRK is that despite the entire residual being computed to detect corruptions, only a single row is used to compute the next iterate. Our method instead leverages the information gained from the residual with a more complex projection step to take a highly over-relaxed step size, leading to a huge acceleration in convergence over the single-row method QuantileRK \cite{haddock2021greed}. 

We prove several convergence results for the proposed method. For an example of the acceleration our method brings, here is a simplified restatement of one of the results that holds for a particular class of random matrices:

\begin{theorem}[Informal restatement of \cref{thm:subgcase}]\label{thm:informalsubgcase}
Assume that $\mA \in \R^{m \times n}$ satisfies a certain random matrix model (see \cref{def:subgausstype}) and has sufficiently large aspect ratio $m/n$. Suppose then that the system $\mA \vx = \vb$ has a fraction $\beta$ of corrupted rows, with $\beta$ sufficiently small. Then with high probability, the iterates produced by applying QuantileABK (see \cref{alg:QBRK}) to this system satisfy
\begin{equation*}
    \norm{\xk - \xopt}^2 \leq \left(1 - c\right)^k\norm{\xinit - \xopt}^2,
\end{equation*}
where $c$ depends only on a user-chosen quantile parameter (in particular, $c$ is independent of $m$ and $n$).
\end{theorem}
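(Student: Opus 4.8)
The plan is to track the error vectors $\ek := \xk - \xopt$ through the over-relaxed averaged update of \cref{alg:QBRK} and to reduce everything to two facts about the random matrix model of \cref{def:subgausstype}: a subset-uniform conditioning estimate for averaged one-dimensional projections, and a subset-uniform operator-norm bound on small blocks. Write $r_i^{(k)} := \langle\va_i,\xk\rangle - b_i$ for the residual entry of row $i$, let $\epsilon_k$ be the $q$-quantile of $\{\abs{r_i^{(k)}}\}_{i=1}^m$ (computed on the full residual, or a large subsample), and let $\tau_k := \{\,i : \abs{r_i^{(k)}}\le\epsilon_k\,\}$ be the selected block, so that the iteration is $\xkpo = \xk - \alpha\,\abs{\tau_k}^{-1}\sum_{i\in\tau_k} r_i^{(k)}\norm{\va_i}^{-2}\va_i$ with over-relaxation parameter $\alpha$. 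Splitting $\tau_k$ into its clean part $\tau_k^{\mathrm{cl}}$ (on which $b^c_i=0$, hence $r_i^{(k)} = \langle\va_i,\ek\rangle$) and its corrupted part $\tau_k^{\mathrm{cr}}$ yields the affine recursion
\begin{equation*}
  \ekpo = (\mI - \alpha\mM_k)\,\ek - \alpha\,\vg_k,
\end{equation*}
with $\mM_k := \abs{\tau_k}^{-1}\sum_{i\in\tau_k^{\mathrm{cl}}} \norm{\va_i}^{-2}\va_i\va_i^{\trans}$ the averaged projection over the clean selected rows and $\vg_k := \abs{\tau_k}^{-1}\sum_{i\in\tau_k^{\mathrm{cr}}} r_i^{(k)}\norm{\va_i}^{-2}\va_i$ the contribution of the corrupted rows that survived the quantile filter. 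It then suffices to show that $\mM_k$ is uniformly well-conditioned, so that a fixed $\alpha$ gives $\norm{\mI - \alpha\mM_k}\le\rho<1$ for every admissible block, and that $\alpha\norm{\vg_k}\le c'(q)\,\beta\,\norm{\ek}$ with $c'(q)$ depending on $q$ only.

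First I would record, once and for all with probability $1-e^{-\Omega(m)}$, the deterministic consequences of \cref{def:subgausstype} that will be invoked: (a) $\norm{\va_i}^2 = \Theta(n)$ for all $i$; (b) for every set $S$ of rows with $\abs S\ge(q-\beta)m$, the averaged projection $\abs S^{-1}\sum_{i\in S}\norm{\va_i}^{-2}\va_i\va_i^{\trans}$ has all eigenvalues in an interval $[\mu_-,\mu_+]$ of order $1/n$ with $\mu_+/\mu_- \le \kappa_q$ a constant depending only on $q$ (and mildly on $\beta$) --- in the paper's notation this is the statement that $\smin$ and $\smax$ concentrate; (c) for every set $S$ with $\abs S\le\beta m$ one has $\norm{\sum_{i\in S}\norm{\va_i}^{-2}\va_i\va_i^{\trans}} = O(\abs S/n)$, up to a $\log(1/\beta)$ factor; and (d) for every unit vector $\vv$, at most a $\gamma$-fraction of rows satisfy $\abs{\langle\va_i,\vv\rangle}>C$, where $\gamma$ can be made as small as desired by taking the absolute constant $C$ large. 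Items (b)--(d) are the technical core: each follows from a matrix Chernoff/Bernstein estimate for a single $S$ (resp.\ $\vv$) together with a union bound over the $\binom{m}{\abs S}$ subsets of a given size (resp.\ an $\varepsilon$-net of the sphere). This is exactly where the hypothesis that $m/n$ be sufficiently large enters: it must beat the $2^m$- and $e^{O(n)}$-type union-bound factors against the $e^{-\Omega(m)}$ tail, and it must force $(q-\beta)m\gg n$ and $\beta m\gg n$ so that the blocks in (b) and (c) genuinely concentrate --- under these conditions the deviation constants can be taken to depend on $q$ alone. Because these events are uniform over all subsets and directions, they apply to every iterate even though $\tau_k$ and $\ek$ are data-dependent.

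Conditioned on the good event the iteration is essentially deterministic. Applying (d) to $\vv = \ek/\norm{\ek}$ shows at least $(1-\gamma-\beta)m$ clean rows have $\abs{\langle\va_i,\ek\rangle}\le C\norm{\ek}$; provided the quantile is chosen with $q<1-\gamma-\beta$ this forces $\epsilon_k\le C\norm{\ek}$, whence $\abs{\tau_k}\ge qm$, $\abs{\tau_k^{\mathrm{cr}}}\le\beta m$, and $\abs{\tau_k^{\mathrm{cl}}}\ge(q-\beta)m$. Then (a)--(b) give $\mM_k\succeq \tfrac12\mu_-\mI$ and $\norm{\mM_k}\le\mu_+$ (the factor from dividing by $\abs{\tau_k}$ rather than $\abs{\tau_k^{\mathrm{cl}}}$ being $\ge 1-\beta/q$), so the fixed choice $\alpha = 2/(\mu_-+\mu_+)$, which is of order $n$ and depends on $q$ and $n$ but not on the realization, yields $\norm{\mI-\alpha\mM_k}\le\rho_q := (\kappa_q-1)/(\kappa_q+1)<1$. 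For the corruption term, every $i\in\tau_k^{\mathrm{cr}}$ has $\abs{r_i^{(k)}}\le\epsilon_k\le C\norm{\ek}$ and $\norm{\va_i}^{-1}=O(n^{-1/2})$, so writing $\vg_k = \abs{\tau_k}^{-1}\sum_{i\in\tau_k^{\mathrm{cr}}} w_i\,\va_i\norm{\va_i}^{-1}$ with $\abs{w_i} = O(\norm{\ek}/\sqrt n)$ and bounding $\norm{\sum_i w_i\va_i\norm{\va_i}^{-1}}\le (\sum_i w_i^2)^{1/2}\,\norm{\sum_{i\in\tau_k^{\mathrm{cr}}}\norm{\va_i}^{-2}\va_i\va_i^{\trans}}^{1/2}$, estimate (c) gives $\norm{\vg_k} = O\big((qm)^{-1}(\beta m/n)\norm{\ek}\big)$ and hence $\alpha\norm{\vg_k} = O(\beta q^{-1}\norm{\ek})$ (up to a $\sqrt{\log(1/\beta)}$ factor); note the triangle inequality alone would lose the cancellation here, so the operator-norm bound (c) is essential. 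Combining, $\norm{\ekpo}\le(\rho_q + c'(q)\beta)\norm{\ek}$, so if $\beta$ is small enough that $c'(q)\beta\le(1-\rho_q)/2$ we get $\norm{\ekpo}^2\le(1-c)\norm{\ek}^2$ with $c := 1-\big((1+\rho_q)/2\big)^2>0$ a function of $q$ alone; iterating in $k$ gives the stated bound.

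I expect the main obstacle to be (b)/(c): proving the subset-uniform conditioning and operator-norm estimates for averaged one-dimensional projections with constants that depend only on the quantile $q$ and do not deteriorate as $m,n\to\infty$, and simultaneously verifying that the needed parameter ranges are consistent --- the quantile argument needs $q$ bounded away from $1$, the conditioning of $\mM_k$ needs $(q-\beta)m\gg n$ together with a quantitative (not merely positive) lower eigenvalue bound over all such subsets, and the corruption estimate needs $\beta$ small relative to $1-\rho_q$, so the three conditions on $(q,\beta,m/n)$ must carve out a nonempty regime. A secondary point to handle carefully is that $\tau_k$ depends on $\ek$ and hence on all prior randomness; this causes no difficulty only because the events in (b)--(d) are made uniform over all subsets and directions in advance, so no adaptive union bound over iterations is required.
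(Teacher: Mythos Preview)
Your decomposition $\ekpo = (\mI - \alpha\mM_k)\ek - \alpha\vg_k$ and the overall strategy---uniform spectral control of the clean block plus a separate bound on the corrupted contribution---is exactly the paper's route. The paper likewise expands the squared error, isolates the clean term $(\mI - \tfrac{\alpha}{|\tau|}\mA_{\tau_1}^{\top}\mA_{\tau_1})\ek$, and bounds the corrupted remainder; the subgaussian case is then obtained by plugging high-probability estimates $\smin \gtrsim (q-\beta)^3 m/n$ and $\smax \lesssim m/n$ into a general deterministic result. So the core idea---your item (b)---matches.

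Where you diverge is in items (c) and (d), and here the paper is considerably simpler. The paper never proves a subset-uniform operator-norm bound on \emph{small} blocks (your (c)) or a uniform-over-the-sphere fraction bound (your (d)). Instead it uses two deterministic lemmas that require only $\sigma_{\max}(\mA)$: the quantile is bounded via a pigeonhole argument, $Q_q(\xk)\le \sigma_{\max}\norm{\ek}/\sqrt{m(1-q-\beta)}$, which replaces your (d) with no probabilistic work at all; and the corrupted sum is bounded by $\norm{\mA_{\tau_2}^{\top}\vr}\le\sigma_{\max}\norm{\vr}$, i.e.\ using the full matrix rather than the small block, which replaces your (c). This yields $\alpha\norm{\vg_k}\lesssim\sqrt{\beta/(1-q-\beta)}\,\norm{\ek}$, a $\sqrt{\beta}$ rather than $\beta$ dependence, but still dimension-free once $\smax\sim m/n$. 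Your sharper (c) would improve the $\beta$-dependence, but it comes with the awkward side condition $\beta m\gg n$ that you flag yourself---this is in tension with ``$\beta$ sufficiently small'' when the aspect ratio $m/n$ is fixed, and the paper's cruder bound sidesteps it entirely.

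Two minor notes. First, \cref{def:subgausstype} normalizes so that $\norm{\va_i}=1$, not $\norm{\va_i}^2=\Theta(n)$; your convention is equivalent after rescaling, but the $\norm{\va_i}^{-2}$ factors you carry are all identically $1$ in the paper's setup. Second, the paper separates the argument into a fully deterministic convergence theorem (in terms of $\smin,\smax$) and then a short probabilistic corollary, whereas you intertwine the two; the paper's modularity makes clearer that the only random-matrix input needed is the pair $(\smin,\smax)$, not the additional uniform bounds you posit.
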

This result may be compared to (\cite{haddock2020quantilebased}, Theorem 1), to see that our method converges faster than QuantileRK in this setting by a factor linear in $n$, the number of columns of the system. Moreover, our method has a computational cost of the same order, with the most significant cost in both methods being the computation of the residual. We note that this acceleration in convergence occurs also in the uncorrupted case (i.e., when $\beta = 0$). See Section~\ref{sec:method} for the formal description of the algorithm and further discussion, and Section~\ref{sec:mainresults} for all theorem statements. Notably, we do not restrict ourselves to the random matrix setting: as in \cite{Steinerberger2021QuantileBasedRK}, we show a general guarantee of linear convergence, with a rate depending on the spectral properties of $\mA$ and its row submatrices (\cref{thm:mainthm}). 

The idea to leverage several equations to speed up Kaczmarz methods is not new, it is in the core of a sequence of Block Kaczmarz methods, including \cite{Elfving1980BlockiterativeMF, needell2013paved, needell2015blockls, Necoara2019FasterRB}. However, not all of them are equally extendable to the corrupted framework. The focus of this work is to discriminate between block Kaczmarz accelerations in terms of their provable robustness to adversarial corruptions: see additional discussion in Sections~\ref{s:block-k} and \ref{s:proj_vs_adv}.

\subsection{Organization}\label{sec:org}
The remainder of the paper is organized as follows. In \cref{sec:notation} we introduce notation used throughout the paper. In \cref{sec:background} we give a detailed background for previous methods upon which our method is built, and in \cref{sec:mainresults} we give a summary of our main results. \cref{sec:method} contains a description of our proposed method, and \cref{sec:theory} contains our theoretical results. In \cref{sec:experiments} we demonstrate our method in a range of experiments, and finally in \cref{sec:conclusion} we conclude and offer ideas for future directions.

\subsection{Notation}\label{sec:notation}
For a matrix $\mA \in \mathbb{R}^{m \times n}$, we denote its rows by $\va_i \in \mathbb{R}^{n}$, $i \in [m]$. For a collection of indices $\tau \subseteq [m]$, we let $\mA_{\tau}$ denote the matrix obtained from $\mA$ by restricting to rows indexed by $\tau$. We denote the operator norm of $\mA$ by $\norm{\mA}$, and the Fr\"obenius norm by $\norm{\mA}_F$. For a vector $\vv$ we denote its Euclidean norm by $\norm{\vv}$. For a matrix $\mA$ we denote its largest singular value by $\sigma_{\max}(\mA)$, and smallest by $\sigma_{\min}(\mA)$. When the matrix at hand is clear, we abbreviate these to $\sigma_{\max}$ and $\sigma_{\min}$.

In sections where we view $\mA$ as an instance of a certain family of random matrices, we use some definitions from probability. Namely, for a real-valued random variable $X$, we denote its subgaussian norm by $\norm{X}_{\Psi_2} := \inf\{t > 0 : \mathbb{E}(\exp(X^2/t^2)) \leq 2\}$. For a random vector $\vv \in \mathbb{R}^n$, its subgaussian norm is defined as $\norm{\vv}_{\Psi_2} := \sup_{\vx \in S^{n-1}} \norm{\langle \vv, \vx \rangle}_{\Psi_2}$. A random variable is said to be subgaussian if it has finite subgaussian norm. Lastly, a random vector $\vv \in \mathbb{R}^n$ is said to be isotropic if $\mathbb{E}(\vv \vv^\top) = \mI$, where $\mI$ denotes an appropriately-sized identity matrix.

We will frequently make use of a quantile of the absolute residual. For $q \in [0,1]$ and $\vx \in \mathbb{R}^n$, we denote the $q$\textsuperscript{th} quantile of the (corrupted) absolute residual $|\mA \vx - \vb|$ by
\[
Q_q(\vx) := \text{q\textsuperscript{th} quantile of }\{|\langle \va_i, \vx\rangle - b_i| : i \in [m]\},
\]
recalling that the $q$\textsuperscript{th} quantile of a multiset $S$ is the $\lceil qS\rceil$\textsuperscript{th} smallest element of $S$.

Lastly, we use $C, c, c_1, \cdots$ to denote absolute constants that may vary from line to line. Subscripts are used to denote dependence on particular quantities, e.g. $C_q$ denotes an absolute constant depending on $q$.

\subsection{Background \& Related Work}\label{sec:background}
\subsubsection{Randomized Kaczmarz}
The Kaczmarz method \cite{Kac37:Angenaeherte-Aufloesung} (later rediscovered for use in computerized tomography as the Algebraic Reconstruction Technique \cite{hounsfield_CAT}) is a popular iterative method for solving overdetermined consistent linear systems. An arbitrary initial iterate $\xinit$ is projected sequentially onto the hyperplanes corresponding to rows of the system $\mA \vx = \vb$, so that at the $k$\textsuperscript{th} iteration the update has the form 
\[
\xk = \xkmo - \frac{\va_i^\top \xkmo - b_i}{\norm{\va_i}^2}\va_i,
\]
where $i = k \text{ mod $m$}$. Whilst convergence to $\xinit$ is guaranteed via a simple application of Pythagoras's theorem, quantitative convergence guarantees proved elusive. In the landmark paper \cite{SV09:Randomized-Kaczmarz}, the authors proved a linear convergence guarantee when rows are selected at random according to a particular distribution. Namely, in their randomized Kaczmarz method, at iteration $k$ row $i$ is selected with probability $\norm{\va_i}^2/\norm{\mA}_F^2$, and the update takes the same form as above. This row selection scheme gave rise to \cref{thm:rkconv}.

\begin{theorem}[Strohmer \& Vershynin, 2007]\label{thm:rkconv}
Suppose that $\mA \vx = \vb$ is consistent with solution $\vx^\ast$. Then the iterates produced by applying randomized Kaczmarz to this system satisfy:
\[
\mathbb{E}\left(\norm{\xk - \xopt}^2\right) \leq \left( 1 - \frac{\sigma_{\mathrm{min}}^2}{\norm{\mA}_F^2}\right)^k \norm{\xinit - \xopt}^2.
\]
\end{theorem}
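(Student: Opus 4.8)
The plan is to track the squared error $\norm{\xk - \xopt}^2$, show it contracts in expectation by the stated factor at each step, and then obtain the bound by taking total expectations and iterating over $k$.

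First I would exploit consistency of the system. Since $\mA\xopt = \vb$, we have $b_i = \va_i^\top\xopt$ for every $i$, so the update rule can be written purely in terms of the error vector $\ek := \xk - \xopt$:
\[
\ek = \ekmo - \frac{\va_i^\top\ekmo}{\norm{\va_i}^2}\va_i = \left(\mI - \frac{\va_i\va_i^\top}{\norm{\va_i}^2}\right)\ekmo,
\]
where $i$ is the (random) index drawn at step $k$. The matrix on the right is the orthogonal projection onto the hyperplane $\{\vz : \va_i^\top\vz = 0\}$, so by the Pythagorean theorem
\[
\norm{\ek}^2 = \norm{\ekmo}^2 - \frac{(\va_i^\top\ekmo)^2}{\norm{\va_i}^2}.
\]

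Next I would take the conditional expectation over the choice of $i$; this is the crucial step, since the sampling probabilities $\norm{\va_i}^2/\norm{\mA}_F^2$ are designed precisely to cancel the $\norm{\va_i}^2$ in the denominator. Indeed,
\[
\E\left[\frac{(\va_i^\top\ekmo)^2}{\norm{\va_i}^2} \,\Big|\, \ekmo\right] = \sum_{i=1}^m \frac{\norm{\va_i}^2}{\norm{\mA}_F^2}\cdot\frac{(\va_i^\top\ekmo)^2}{\norm{\va_i}^2} = \frac{\norm{\mA\ekmo}^2}{\norm{\mA}_F^2} \geq \frac{\sigma_{\min}^2}{\norm{\mA}_F^2}\norm{\ekmo}^2,
\]
where the last inequality uses that $\mA$ has full column rank, so $\norm{\mA\vz} \geq \sigma_{\min}\norm{\vz}$ for all $\vz$. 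Combining with the Pythagorean identity gives $\E\left[\norm{\ek}^2 \mid \ekmo\right] \leq \left(1 - \sigma_{\min}^2/\norm{\mA}_F^2\right)\norm{\ekmo}^2$.

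Finally, taking total expectations and applying this contraction inductively over $k$ steps yields the claimed bound. There is no serious obstacle here: the only points requiring care are to notice that the norm-proportional sampling exactly converts the per-step decrement into $\norm{\mA\ekmo}^2/\norm{\mA}_F^2$, and that full column rank (assumed throughout, since $\mA$ has full rank with $m \gg n$) lets us pass from $\norm{\mA\ekmo}^2$ to $\sigma_{\min}^2\norm{\ekmo}^2$. If one only assumes consistency with $\xopt$ the projection of $\xinit$ onto the solution set, one additionally observes that every $\ek$ lies in the row space of $\mA$, on which the same singular-value estimate holds with $\sigma_{\min}$ the smallest nonzero singular value.
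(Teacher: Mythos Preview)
Your proof is correct and is precisely the standard argument of Strohmer and Vershynin. Note, however, that the paper does not actually prove \cref{thm:rkconv}: it is quoted as background from \cite{SV09:Randomized-Kaczmarz} and used only for comparison with the block and quantile variants, so there is no ``paper's own proof'' to compare against here. Your write-up matches the original derivation in all essential respects.
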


This result spurred a boom in related research, including Kaczmarz variants with differing row selection protocols \cite{Steinerberger2021AWR, Haddock2018OnMM, bai2018greedy}, block update methods \cite{Elfving1980BlockiterativeMF, popa1997, needell2013paved}, and adaptive methods \cite{gower2021adaptive}. Our method is motivated by block methods in particular, which we proceed to discuss in more detail. 

\subsubsection{Block Kaczmarz Methods}\label{s:block-k}

Variants of the Kaczmarz method that make use of more than a single row at each iteration, often referred to as block methods, have been extensively studied. Two particular methodologies have proven popular: 
\begin{itemize}
    \item \emph{projective} block methods, in which at each iteration the iterate is projected onto the subspace defined by an entire block of rows \cite{needell2013paved, popa1997, Elfving1980BlockiterativeMF}, and
    \item \emph{averaged} block methods, in which at each iteration the projections of the previous iterate onto each individual row in a block are computed and then averaged \cite{Necoara2019FasterRB, moorman2021average}.
\end{itemize}

Consider first the projective methodology. It has been shown the projective block Kaczmarz significantly outperforms randomized Kaczmarz \cite{needell2013paved}, particularly in the case when the system has coherent rows \cite{needell2013two}. Each iteration of the projective algorithm computes the best possible update given the information from the considered block, however, block projections are known to be significantly less stable for more sophisticated tasks, for example linear feasibility problems \cite{briskman2015block}. 

The presence of corruptions may also significantly disrupt projective block variants. Whilst in the single-row setting the quantile statistic is able to control the potential harm caused by projecting onto a corrupted row, a block containing a corrupted row may yield a projection that is arbitrarily far from the true solution. To some extent, this issue can be alleviated by posing an assumption of row \emph{incoherence}: that every two rows are not nearly parallel, i.e., their normal vectors have small scalar products. Informally, this results in the intersection subspaces being ``close enough" to individual projection points due to non-trivial angles between the solution hyperplanes for individual equations. The incoherence condition is implicitly needed in the existing non-block QuantileRK results \cite{haddock2020quantilebased, Steinerberger2021QuantileBasedRK} to ensure that the quantile statistic is representative. In this work, it also appears in the form of a restricted smallest singular value, discussed below.

However, the incoherence assumption does not resolve the second deficiency of projective block methods applied to corrupted systems. Namely, a residual-based criterion for deciding if a certain equation is trustworthy or corrupted cannot guarantee to identify all corrupted equations: for example, a current iterate might satisfy some corrupted equation exactly. Projecting onto a block containing a corrupted equation keeps the iterate inside its corrupted (shifted) hyperplane. Finally, when increasing block size, one rapidly increases the chance of an adversarial setting in which the majority of the blocks contain at least one corrupted row. A concrete adversarial construction for projective block methods is discussed in \cref{s:proj_vs_adv}.

Given the lack of robustness of projective block methods, we focus in this work on modifying an averaged block Kaczmarz method introduced by Necoara \cite{Necoara2019FasterRB} and also considered in \cite{moorman2021average}. For a consistent system $\mA \vx = \vb$ with solution $\xopt$, at the $k$\textsuperscript{th} iteration a block of row indices $\tau_k$ is selected from a distribution $\mathcal{D}$ on $[m]$. Then, the projections of $\xkmo$ onto each row in $\tau$ are computed and averaged, possibly in a weighted fashion. A step of size $\alpha_k$ -- potentially dependent on the iteration -- is then taken in this averaged direction. The update is thus given by
\[
\xk = \xkmo - \alpha_k \sum_{i \in \tau_k}w^k_i\frac{\va_i^\top \xkmo - b_i}{\norm{\va_i}^2}\va_i, \quad \text{ with weights $w^k_i$ such that } \sum_{i \in \tau_k}w^k_i = 1.
\]
The method may be found in full as Algorithm 4.1 in \cite{Necoara2019FasterRB}, and we refer to it as AveragedRBK. The convergence of AveragedRBK depends on the spectra of the row submatrices formed by sampled blocks. Indeed, the key quantity
\begin{equation*}
\sigma^2_{\mathcal{D}, \max} := \max_{\tau \sim \mathcal{D}} \sigma^2_{\max}(\mA_\tau),
\end{equation*}
the largest singular value of any row-submatrix with rows sampled from $\mathcal{D}$.

Necoara's framework allows many freedoms: in row selection strategy, weighting scheme, and step size. Specializing to the particular case of uniformly weighted rows, a constant (optimized) step size, and fixed block size (but without restraint on other aspects of $\mathcal{D}$), the following convergence result holds.

\begin{theorem}[Necoara, 2019]\label{thm:avebrkconv}
Suppose that the system $\mA \vx = \vb$ is consistent with solution $\xopt$, and that $\mA$ has been normalized such that each row has unit norm. Then the iterates produced by applying AveragedRBK with block size $|\tau|$, step size $\frac{|\tau|}{\sigma^2_{\mathcal{D}, \max}}$, and row weights $1/|\tau|$, satisfy
\[
\mathbb{E}\left(\norm{\xk - \xopt}^2\right) \leq \left(1 - \frac{|\tau|\sigma_{\mathrm{min}}^2}{m\sigma^2_{\mathcal{D}, \max}}\right)^k \norm{\xinit - \xopt}^2.
\]
\end{theorem}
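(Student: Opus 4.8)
The plan is to follow the standard energy-decrease argument for randomized Kaczmarz-type methods: track the error $\ek := \xk - \xopt$ and show that each step contracts it in expectation by a factor $1 - \tfrac{|\tau|\sigma_{\min}^2}{m\sigma^2_{\mathcal{D},\max}}$, then unroll the recursion.

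First I would rewrite the update as a linear recursion on the error. Since the system is consistent, $b_i = \va_i^\top\xopt$ for all $i$, and since $\mA$ has unit-norm rows the AveragedRBK update collapses to
\[
\ek \;=\; \ekmo - \frac{\alpha}{|\tau|}\sum_{i\in\tau_k}(\va_i^\top\ekmo)\,\va_i \;=\; \Bigl(\mI - \tfrac{1}{\sigma^2_{\mathcal{D},\max}}\,\mA_{\tau_k}^\top\mA_{\tau_k}\Bigr)\ekmo \;=:\; \mM_{\tau_k}\ekmo ,
\]
where I have substituted the prescribed step size $\alpha = |\tau|/\sigma^2_{\mathcal{D},\max}$, so that $\alpha/|\tau| = 1/\sigma^2_{\mathcal{D},\max}$. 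Since $\mM_{\tau_k}$ is symmetric, $\norm{\ek}^2 = \ekmo^\top\mM_{\tau_k}^2\ekmo$.

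The crux is the operator-inequality chain $\Eb{\mM_{\tau_k}^2} \preceq \Eb{\mM_{\tau_k}} \preceq \bigl(1 - \tfrac{|\tau|\sigma_{\min}^2}{m\sigma^2_{\mathcal{D},\max}}\bigr)\mI$. For the first inequality I would observe that the eigenvalues of $\mA_\tau^\top\mA_\tau$ lie in $[0,\sigma_{\max}^2(\mA_\tau)] \subseteq [0,\sigma^2_{\mathcal{D},\max}]$ by the very definition of $\sigma^2_{\mathcal{D},\max}$, hence $\mathbf{0}\preceq\mM_\tau\preceq\mI$ for every sampled block $\tau$; a symmetric matrix with eigenvalues in $[0,1]$ satisfies $\mM_\tau^2\preceq\mM_\tau$, and this is preserved under expectation. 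This is exactly where the specific step size enters: any larger $\alpha$ would push an eigenvalue of $\mM_\tau$ below $0$ and destroy this step. For the second inequality, using that $\mathcal{D}$ includes each row in the block with marginal probability $|\tau|/m$ (the natural ``fair'' sampling — some such condition is genuinely needed, since a $\mathcal{D}$ supported on a single block of size less than $n$ contracts nothing in some direction), one computes $\Eb{\mA_\tau^\top\mA_\tau} = \sum_{i\in[m]}\Pb{i\in\tau}\va_i\va_i^\top = \tfrac{|\tau|}{m}\mA^\top\mA$, so $\Eb{\mM_\tau} = \mI - \tfrac{|\tau|}{m\sigma^2_{\mathcal{D},\max}}\mA^\top\mA$; full column rank of $\mA$ gives $\mA^\top\mA\succeq\sigma_{\min}^2\mI$ as $n\times n$ matrices, which closes the chain (no row-space restriction on $\xinit$ is needed).

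Combining these, $\Eb{\norm{\ek}^2\mid\ekmo} = \ekmo^\top\Eb{\mM_{\tau_k}^2}\ekmo \le \bigl(1 - \tfrac{|\tau|\sigma_{\min}^2}{m\sigma^2_{\mathcal{D},\max}}\bigr)\norm{\ekmo}^2$, and I would finish by taking total expectations and iterating over $k$ via the tower property. There is no deep obstacle here; the only point worth flagging is bookkeeping around the hypotheses on $\mathcal{D}$: if one wants the statement for an arbitrary fixed-block-size sampling, the rate must instead be phrased via $\lambda_{\min}$ of $\Eb{\tfrac{1}{|\tau|}\sum_{i\in\tau}\va_i\va_i^\top}$ rather than through $\sigma_{\min}^2/m$, so I would either add the uniform-marginal assumption explicitly or state the bound at that level of generality.
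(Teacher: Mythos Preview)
The paper does not actually prove this theorem: it is stated as a background result from \cite{Necoara2019FasterRB} and the reader is referred there for the proof. So there is no ``paper's own proof'' to compare against directly.

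That said, your argument is correct, and it is essentially the same technique the present paper uses for the uncorrupted ``Term 1'' in the proof of \cref{thm:mainthm}: expand $\norm{\ek}^2 = \ekmo^\top\bigl(\mI - \tfrac{\alpha}{|\tau|}\mW_\tau\bigr)^2\ekmo$, bound $\mW_\tau^2 \preceq \sigma^2_{\mathcal{D},\max}\,\mW_\tau$ (equivalently, your $\mM_\tau^2 \preceq \mM_\tau$), and then lower-bound $\mW_\tau$ in expectation via the smallest singular value. Your observation that the stated result tacitly requires a uniform-marginal condition on $\mathcal{D}$ (so that $\Eb{\mA_\tau^\top\mA_\tau} = \tfrac{|\tau|}{m}\mA^\top\mA$) is well taken; the paper's phrasing ``without restraint on other aspects of $\mathcal{D}$'' is loose on exactly this point, and Necoara's original statement handles it by phrasing the rate in terms of $\lambda_{\min}$ of the expected block matrix, as you suggest in your closing caveat.
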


We include this particular result as it allows for easier comparison with other methods, but we refer the reader to \cite{Necoara2019FasterRB} for more general results. In particular, we see that under the setup of \cref{thm:avebrkconv}, AveragedRBK achieves an improvement in convergence rate by a factor of $|\tau|/\sigma^{2}_{\mathcal{D}, \max}$ compared to RK (recall \cref{thm:rkconv}). This is greater than one in most sensible cases, for instance if $\operatorname{rank}(\mA_{\tau}) \geq 2$ for all $\tau$, and will represent a significant speedup in cases where the sampled blocks are well-conditioned. We refer to Section 4.3 of \cite{Necoara2019FasterRB} for further details. Furthermore, we note that the accelerated convergence rate does not necessarily come with greater computation time as the individual row projections may be performed in parallel: see \cite{moorman2021average}.

\subsubsection{Kaczmarz Variants for Least Squares}

Research on randomized Kaczmarz and its variants originated in the setting of a consistent, full rank system. Since then, convergence results have been extended to the rank-deficient (but still consistent) case for randomized Kaczmarz \cite{zouzias2013rek} and projective block Kaczmarz \cite{Haddock2021PavingTW}. Generalizing results and methods to the inconsistent setting has also been an area of interest, for example in \cite{needell2010randomized} the author shows the following result in the setting of a \emph{noisy} system.

\begin{theorem}[Needell, 2010]
Suppose that $\mA\vx = \vb$ is a consistent system with solution $\xopt$, and that $\vr$ is some vector of noise. Then the iterates produced by applying randomized Kaczmarz to the system $\mA \vx = \vb + \vr$ satisfy
\[
\mathbb{E}\left(\norm{\xk - \xopt}^2\right) \leq \left(1 - \frac{\sigma_{\mathrm{min}}^2}{\norm{A}_F^2}\right)^k \norm{\xinit - \xopt}^2 + \frac{n}{\sigma_{\mathrm{min}}^2}\norm{\vr}_\infty^2.
\]
\end{theorem}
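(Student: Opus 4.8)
The plan is to run the Strohmer--Vershynin analysis behind \cref{thm:rkconv} while tracking the extra error injected at each step by the noise vector $\vr$. Write $\ek := \xk - \xopt$, and suppose row $i$ is sampled at step $k$, which happens with probability $\norm{\va_i}^2/\norm{\mA}_F^2$. Because $\mA\vx = \vb$ is consistent we have $b_i = \langle\va_i,\xopt\rangle$, so the randomized Kaczmarz update applied to $\mA\vx = \vb+\vr$ rearranges to
\[
\ek \;=\; \ekmo - \frac{\langle\va_i,\ekmo\rangle}{\norm{\va_i}^2}\,\va_i \;+\; \frac{r_i}{\norm{\va_i}^2}\,\va_i \;=\; P_i\ekmo + \frac{r_i}{\norm{\va_i}^2}\,\va_i,
\]
where $P_i := \mI - \va_i\va_i^\top/\norm{\va_i}^2$ is the orthogonal projection onto $\va_i^\perp$. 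The first summand lies in $\va_i^\perp$ and the second is parallel to $\va_i$, so Pythagoras gives the exact identity
\[
\norm{\ek}^2 \;=\; \norm{P_i\ekmo}^2 + \frac{r_i^2}{\norm{\va_i}^2} \;=\; \norm{\ekmo}^2 - \frac{\langle\va_i,\ekmo\rangle^2}{\norm{\va_i}^2} + \frac{r_i^2}{\norm{\va_i}^2}.
\]

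Next I would take the conditional expectation over the choice of $i$ given $\xkmo$. The middle term averages to $\norm{\mA\ekmo}^2/\norm{\mA}_F^2$, which is at least $\sigma_{\min}^2\norm{\ekmo}^2/\norm{\mA}_F^2$ since $\mA$ has full column rank (so $\ekmo\in\R^n$ causes no row-space issue), while the last term averages to $\sum_i r_i^2/\norm{\mA}_F^2 = \norm{\vr}^2/\norm{\mA}_F^2$. This gives the one-step contraction-with-drift
\[
\Eb{\norm{\ek}^2 \mid \xkmo} \;\le\; \Bigl(1 - \frac{\sigma_{\min}^2}{\norm{\mA}_F^2}\Bigr)\norm{\ekmo}^2 \;+\; \frac{\norm{\vr}^2}{\norm{\mA}_F^2}.
\]

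Finally I would take total expectations and unroll the recursion. The homogeneous part reproduces the Strohmer--Vershynin factor $(1-\sigma_{\min}^2/\norm{\mA}_F^2)^k\norm{\einit}^2$, and the inhomogeneous part is a geometric sum
\[
\frac{\norm{\vr}^2}{\norm{\mA}_F^2}\sum_{j=0}^{k-1}\Bigl(1 - \frac{\sigma_{\min}^2}{\norm{\mA}_F^2}\Bigr)^{j} \;\le\; \frac{\norm{\vr}^2}{\norm{\mA}_F^2}\cdot\frac{\norm{\mA}_F^2}{\sigma_{\min}^2} \;=\; \frac{\norm{\vr}^2}{\sigma_{\min}^2},
\]
and bounding $\norm{\vr}^2$ entrywise by $\norm{\vr}_\infty^2$ yields the stated additive term. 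I do not expect a genuine obstacle here; the two points that need care are (i) the cancellation making the update exactly Pythagorean, i.e.\ that the injected perturbation $r_i\va_i/\norm{\va_i}^2$ is orthogonal to $P_i\ekmo$ so that no cross term survives, and (ii) summing the perturbation series, which is what turns the per-step factor $1/\norm{\mA}_F^2$ into the $1/\sigma_{\min}^2$ appearing in the final noise term.
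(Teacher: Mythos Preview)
The paper does not prove this statement; it is quoted as background from \cite{needell2010randomized}, so there is no in-paper proof to compare against. Your argument is exactly the standard one from that reference: decompose the update into the orthogonal projection $P_i\ekmo$ plus the noise kick $r_i\va_i/\norm{\va_i}^2$, use Pythagoras, take conditional expectation, and unroll the resulting affine recursion.

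One point to flag: your last line does not actually reproduce the constant in the displayed statement. You arrive at the additive term $\norm{\vr}^2/\sigma_{\min}^2$, and ``bounding $\norm{\vr}^2$ entrywise by $\norm{\vr}_\infty^2$'' gives $\norm{\vr}^2 \le m\norm{\vr}_\infty^2$, hence $m\norm{\vr}_\infty^2/\sigma_{\min}^2$, not $n\norm{\vr}_\infty^2/\sigma_{\min}^2$. This is not a flaw in your reasoning; the original result in \cite{needell2010randomized} has additive term $\norm{\mA}_F^2\gamma^2/\sigma_{\min}^2$ with $\gamma=\max_i|r_i|/\norm{\va_i}$, which under the unit-row normalization used elsewhere in this paper becomes $m\norm{\vr}_\infty^2/\sigma_{\min}^2$. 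The $n$ in the quoted statement appears to be a transcription slip, and your derivation in fact yields the sharper horizon $\norm{\vr}^2/\sigma_{\min}^2$ before any coarsening to an $\ell_\infty$ bound.
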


This result shows that randomized Kaczmarz is guaranteed to converge at the same rate as for a consistent system, but only up to some error horizon. Similar results, of convergence to a horizon, have been shown for projective block Kaczmarz \cite{needell2013paved}, averaged block Kaczmarz \cite{moorman2021average}, and other variants \cite{Haddock2018OnMM}.

Other works have developed methods that converge all the way to the least squares solution \cite{censor1983inconsistent, zouzias2013rek}. For example, in randomized extended Kaczmarz \cite{zouzias2013rek}, randomized Kaczmarz is applied simultaneously to the systems $\mA^\top \vz = 0$ and $\mA \vx = \vb - \vz$, with the $\vz$ and $\vx$ iterates converging to $\vb_{\operatorname{Im}(\mA)^\perp}$ and $\vx_{\mathrm{LS}}$ respectively. Recent works have expanded this idea to both projective and averaged block variants \cite{needell2015blockls, du2020reabk}. However, as noted previously, such methods are unsuitable in the sparse corruption model as $\vx_{\mathrm{LS}}$ may be a poor approximation of the true solution $\vx^\ast$. We discuss previous works in this direction next.

\subsubsection{Quantile Randomized Kaczmarz}
The first study of Kaczmarz methods for the sparse corruption model may be found in \cite{HadNeeCorr18}, in which the authors make use of the notion that corrupted rows are likely to have larger residual entries, as their corresponding hyperplane is displaced far from both the current iterate and true solution. Through applying several rounds of Kaczmarz-type iterations, such corrupted rows may be detected with high probability. However, the method requires severe restrictions on the number of corrupted rows. In particular, the method does not support the sparse corruption model we consider here, in which the number of corruptions scales linearly with the number of rows.

In \cite{haddock2020quantilebased}, the authors expand on this residual-based heuristic and introduce a quantile-based modification of randomized Kaczmarz, QuantileRK, which also attempts to detect and avoid projecting onto corrupted rows. A sample of rows is taken and a quantile of the resultant (absolute) subresidual is computed, and then one further row is sampled. If this sampled row has absolute residual entry below the quantile, it is deemed acceptable for projection, otherwise the iterate remains unchanged. The algorithm is given in full in \cite{haddock2020quantilebased} as Method 1.

Whilst extensive experiments in \cite{haddock2020quantilebased} indicate the effectiveness of QuantileRK for a variety of systems, corruption models, and very high corruption rates (values of $\beta$ up to $0.5$), the authors require significant restrictions on the matrix $\mA$ for their theoretical results. In particular, they assume a random matrix heuristic, captured in the following definition.

\begin{definition}\label{def:subgausstype}(Subgaussian-type systems)
Let $\mA \in \mathbb{R}^{m \times n}$ be a random matrix. We say that $\mA$ is of subgaussian-type if all of the following hold:
\begin{enumerate}
    \item $\norm{\va_i} = 1$ for all $i \in [m]$.
    \item $\sqrt{n}\va_i$ is mean-zero and isotropic for all $i \in [m]$.
    \item For some $K > 0$, $\norm{\sqrt{n}\va_i}_{\Psi_2} \leq K$ for all $i \in [m]$.
    \item For some $D > 0$, every entry $a_{ij}$ of $\mA$ has density function uniformly bounded by $D\sqrt{n}$.
\end{enumerate}
Here, $K$ and $D$ are absolute constants independent form the size of the matrix.
\end{definition}

These conditions are satisfied, for example, by a matrix whose rows are sampled uniformly from the unit sphere in $\mathbb{R}^n$. With these constraints, in \cite{haddock2020quantilebased} the authors prove the following high-probability linear convergence guarantee, without placing any restriction on the size or placement of corruptions but with an additional requirement that $\mA$ be sufficiently tall.

\begin{theorem}[Haddock et al., 2021]\label{thm:qrkconv}
Assume that $\mA$ is of subgaussian-type, and that the system $\mA \vx = \vb$ has a fraction $\beta$ of corrupted rows. Then with high probability, the iterates produced by QuantileRK with $t = m$ (i.e., the full residual is computed at each iteration) applied to this system satisfy
\[
\mathbb{E}\left(\norm{\xk - \xopt}^2\right) \leq \left( 1- \frac{C_q}{n}\right)\norm{\xinit - \xopt}^2,
\]
for some constant $C_q$, so long as $\beta \leq \min(c_1q^2, 1-q)$ and $m \geq Cn$.
\end{theorem}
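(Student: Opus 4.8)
The plan is to mirror the standard one-step contraction analysis for randomized Kaczmarz, but with the key technical work being to show that, with high probability over the draw of $\mA$, the quantile statistic $Q_q(\vx_k)$ behaves like a genuine quantile of the \emph{uncorrupted} residual $\abs{\mA\vx_k - \vb^t}$ uniformly over all iterates $\vx_k$. Concretely, I would first fix an iterate $\vx$ with error $\ve = \vx - \xopt$ and condition on the good event (to be established) that two things hold: (i) the admitted rows — those $i$ with $\abs{\langle \va_i,\vx\rangle - b_i} \le Q_q(\vx)$ — are all \emph{uncorrupted}, so that for any admitted $i$ we have $\langle \va_i,\vx\rangle - b_i = \langle \va_i, \ve\rangle$; and (ii) a positive fraction of rows, say at least $(q-\beta)m$ of them, are uncorrupted and satisfy $\abs{\langle\va_i,\ve\rangle} \le Q_q(\vx)$, while also $Q_q(\vx)$ is not too large relative to $\norm{\ve}$. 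Point (i) needs $Q_q(\vx) < \abs{\langle\va_i,\ve\rangle - b^c_i}$ for every corrupted $i$ whose residual would otherwise be small; this is where $\beta \le 1-q$ matters, guaranteeing the quantile index $\lceil qm\rceil$ falls within the uncorrupted block when residuals are sorted, and where the density bound (condition 4 of \cref{def:subgausstype}) is used to rule out corrupted residuals clustering near the threshold.

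Given the good event, the one-step analysis is routine: selecting an admitted row $i$ uniformly (or by the QuantileRK sampling rule), the update is $\xkpo = \xk - \langle\va_i,\ve\rangle\va_i$ since $\norm{\va_i}=1$, so by Pythagoras $\norm{\ekpo}^2 = \norm{\ek}^2 - \langle\va_i,\ek\rangle^2$. Taking expectation over the row choice, $\Eb{\norm{\ekpo}^2 \mid \ek} \le \norm{\ek}^2 - \Eb{\langle\va_i,\ek\rangle^2 \mid i \text{ admitted}}$, and the crux is lower-bounding this conditional second moment by $(C_q/n)\norm{\ek}^2$. I would do this in two pieces: a lower bound on $\sum_{i \text{ admitted}} \langle\va_i,\ek\rangle^2$ coming from the restricted smallest singular value of the admitted submatrix $\mA_\tau$ — roughly $\sigma_{\min}^2(\mA_\tau)\norm{\ek}^2 \gtrsim (\abs{\tau}/n)\norm{\ek}^2$ for a tall subgaussian-type submatrix — combined with the fact that $\abs{\tau} \ge (q-\beta)m \gtrsim qm$; dividing by $\abs{\tau}$ (the number of admissible rows) yields the per-step gain of order $q/n$, hence $C_q \asymp q$ up to the subgaussian/isotropy constants $K, D$.

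The main obstacle, and where almost all the work lies, is establishing the good event \emph{uniformly over all $\vx$} (equivalently over all error directions $\ve/\norm{\ve} \in S^{n-1}$ and all scales), since the iterates are adaptively generated and cannot be treated as fixed. The standard route is a net argument: discretize $S^{n-1}$, show for each fixed direction that (a) concentration of $\tfrac1m\sum_i \mathbbm 1[\abs{\langle\va_i,\ve\rangle}\le t]$ around its mean (a one-dimensional marginal CDF controlled by conditions 2–4) gives the quantile-representativeness with failure probability $e^{-cm}$, and (b) a restricted singular value bound $\sigma_{\min}(\mA_\tau) \gtrsim \sqrt{\abs{\tau}/n} - \sqrt{\text{junk}}$ holds for all subsets $\tau$ of size $\ge (q-\beta)m$ with failure probability $e^{-c\abs{\tau}}$ (a union bound over $\binom{m}{\abs\tau}$ subsets, absorbed because $\abs{\tau} \gtrsim m$ and $m \ge Cn$); then take a union bound over the net of size $e^{Cn}$, which the $e^{-cm}$ tails beat precisely because $m \ge Cn$ with $C$ large. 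The condition $\beta \le c_1 q^2$ enters here to ensure the margin in the restricted singular value bound stays bounded away from zero after removing up to $\beta m$ adversarial rows, i.e.\ that the corruptions cannot destroy the well-conditioning of \emph{every} candidate admitted block. Handling the Lipschitz dependence of $Q_q(\vx)$ and of the admitted set on $\vx$ across net points — so that the per-point estimates transfer to a uniform statement — is the fiddly part that the density bound (condition 4) is designed to make tractable.
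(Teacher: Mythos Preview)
First, a meta-point: this theorem is quoted from \cite{haddock2020quantilebased} and is not proved in the present paper. The closest thing to a proof here is the argument behind the analogous \cref{thm:mainthm} (for QuantileABK) together with \cref{thm:steiner} (Steinerberger's deterministic version for QuantileRK), and those make clear where your plan breaks.

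The substantive gap is your item (i): you propose to show that, on a high-probability event over $\mA$, \emph{every} admitted row is uncorrupted. This is false under the corruption model, and no randomness assumption on $\mA$ can rescue it. The corruptions $b^c_i$ are deterministic and adversarial; for any fixed iterate $\vx_k$ an adversary can set $b^c_i = \langle \va_i, \vx_k\rangle - b^t_i$ so that the corrupted residual is exactly zero, guaranteeing that row passes any quantile threshold. The density bound in \cref{def:subgausstype} constrains the distribution of the entries of $\mA$, not the corrupted residuals $\langle \va_i,\ve\rangle - b^c_i$, which for fixed $\mA$ and $\ve$ are deterministic numbers shifted by an adversarial constant. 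There is simply no probabilistic mechanism that ``rules out corrupted residuals clustering near the threshold.''

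The actual argument (visible in the proof of \cref{thm:mainthm} here, and in \cref{thm:steiner}) does \emph{not} try to exclude corrupted rows from the admitted set. Instead it splits the admitted index set $\tau$ into uncorrupted indices $\tau_1$ and corrupted indices $\tau_2$ (with $|\tau_2|\le \beta m$), and controls the damage from $\tau_2$ via the quantile bound of \cref{lem:quantileestimate}: any admitted corrupted row has residual at most $Q_q(\vx_k) \le \sigma_{\max}\norm{\ek}/\sqrt{m(1-q-\beta)}$, so its projection can move the iterate by at most that amount. Combining this with \cref{lem:innerprodestimate} to sum over $\tau_2$ yields a ``bad'' term of order $\sqrt{\beta}\,\sigma_{\max}^2/(m\sqrt{1-q-\beta})$ times $\norm{\ek}^2$, which is then shown to be dominated by the ``good'' term coming from the restricted singular value of $\mA_{\tau_1}$ (your part (b), which is essentially correct and is \cref{prop:haddprop1} here). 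The condition $\beta \le c_1 q^2$ arises from this balancing, not from emptying $\tau_2$. Your net-and-union-bound strategy for making the singular value estimates uniform in $\vx$ is the right idea, but the one-step decrement you feed into it must include the corrupted-but-admitted contribution.
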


In \cite{Steinerberger2021QuantileBasedRK}, Steinerberger sought to generalize the theory behind QuantileRK beyond the random matrix setting. Indeed, he distilled the critical controls that the random matrix heuristic provides to conditions on the quantity
\begin{equation}\label{smin_def}
\smin := \inf_{\tau \subset [m], |\tau| = (q - \beta)m}\sigma_{\min}^2(\mA_{\tau}).
\end{equation}
Whilst assuming that $\mA$ is of subgaussian-type allows for estimations of $\smin$ (see \cite{haddock2020quantilebased}, Proposition 1), one may also give a much more general convergence result in terms of this quantity, albeit with stricter relative conditions on $q$ and $\beta$.

\begin{theorem}[Steinerberger, 2021]\label{thm:steiner}
Suppose that $\mA \vx = \vb$ has a fraction $\beta$ of corrupted rows. Then for $\beta < q < 1-\beta$, if
\[
\frac{q}{q-\beta}\left(\frac{2\sqrt{\beta}}{\sqrt{1- q - \beta}} + \frac{\beta}{1- q- \beta}\right) < \frac{\smin}{\smax},
\]
then the iterates of QuantileRK$(q)$ with $t = m$ applied to this system satisfy
\[
\mathbb{E}\left(\norm{\xk - \xopt}^2\right) \leq (1 - c_{\mA, \beta, q})^k\norm{\xinit - \xopt}^2,
\]
where
\[
c_{\mA, \beta, q} = (q-\beta)\frac{\smin}{q^2 m} - \frac{\smax}{qm}\left(\frac{2\sqrt{\beta}}{\sqrt{1- q - \beta}} + \frac{\beta}{1- q- \beta}\right) > 0.
\]
\end{theorem}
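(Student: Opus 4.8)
The plan is to track the squared error $\norm{\ve_k}^2$, where $\ve_k := \vx_k - \xopt$, by establishing a per-step conditional contraction and iterating it. Recall that at step $k$ QuantileRK$(q)$ with $t = m$ computes the full residual $\vr_k = \mA\vx_k - \vb$ and the threshold $Q_q(\vx_k)$, samples a row $i$, and --- provided $i$ is \emph{admissible}, i.e. $|r_k(i)| \le Q_q(\vx_k)$ --- performs the one-row projection $\xkpo = \vx_k - (r_k(i)/\norm{\va_i}^2)\va_i$, leaving $\vx_k$ unchanged otherwise. I would split the admissible rows into \emph{uncorrupted} ones (those with $b_i = \langle \va_i, \xopt\rangle$, so $r_k(i) = \langle\va_i,\ve_k\rangle$) and \emph{corrupted} ones ($b_i = \langle\va_i,\xopt\rangle + b_i^c$ with $b_i^c \neq 0$, so $r_k(i) = \langle\va_i,\ve_k\rangle - b_i^c$). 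A one-line expansion of $\norm{\ekpo}^2$ after a projection gives, for an uncorrupted admissible row, the exact decrease $\norm{\ekpo}^2 = \norm{\ve_k}^2 - \langle\va_i,\ve_k\rangle^2/\norm{\va_i}^2$, and for a corrupted admissible row, $\norm{\ekpo}^2 = \norm{\ve_k}^2 + \big((b_i^c)^2 - \langle\va_i,\ve_k\rangle^2\big)/\norm{\va_i}^2$; since admissibility forces $|b_i^c| \le |\langle\va_i,\ve_k\rangle| + Q_q(\vx_k)$, this last term is an \emph{increase} of at most $\big(2|\langle\va_i,\ve_k\rangle|\,Q_q(\vx_k) + Q_q(\vx_k)^2\big)/\norm{\va_i}^2$.

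Averaging these outcomes over the sampled admissible row (writing $\norm{\va_i}=1$, the standard normalization; the general case follows after row rescaling) yields a one-step bound of the schematic form
\[
\mathbb{E}\big[\norm{\ekpo}^2 \mid \vx_k\big] \le \norm{\ve_k}^2 - \tfrac{1}{qm}\!\!\sum_{i\text{ adm. unc.}}\!\!\langle\va_i,\ve_k\rangle^2 + \tfrac{1}{qm}\!\!\sum_{i\text{ adm. corr.}}\!\!\big(2|\langle\va_i,\ve_k\rangle|\,Q_q(\vx_k) + Q_q(\vx_k)^2\big),
\]
the factor $1/(qm)$ reflecting that there are on the order of $qm$ equally likely admissible rows. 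Two submatrix-spectrum facts then finish the argument. First, at least $(q-\beta)m$ of the admissible rows are uncorrupted, so picking among them any subset $\tau$ of size exactly $(q-\beta)m$ gives $\sum_{i\in\tau}\langle\va_i,\ve_k\rangle^2 = \norm{\mA_\tau\ve_k}^2 \ge \sigma_{\min}^2(\mA_\tau)\norm{\ve_k}^2 \ge \smin\norm{\ve_k}^2$ by \eqref{smin_def}, lower-bounding the good term by a multiple of $\smin\norm{\ve_k}^2$. Second --- the key estimate --- one bounds the threshold itself: since $q + \beta < 1$, at least $(1-q-\beta)m$ uncorrupted rows have residual $\ge Q_q(\vx_k)$, and the squares $\langle\va_i,\ve_k\rangle^2$ summed over any set of uncorrupted rows are at most $\norm{\mA\ve_k}^2 \le \smax\norm{\ve_k}^2$, forcing
\[
Q_q(\vx_k)^2 \le \frac{\smax}{(1-q-\beta)m}\,\norm{\ve_k}^2 .
\]

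Substituting this bound into the harm term, together with Cauchy--Schwarz over the at most $\beta m$ admissible corrupted rows (so $\sum_{i\text{ adm. corr.}}|\langle\va_i,\ve_k\rangle| \le \sqrt{\beta m}\,\sqrt{\smax}\,\norm{\ve_k}$), bounds the harm by $\tfrac{\smax}{qm}\big(\tfrac{2\sqrt{\beta}}{\sqrt{1-q-\beta}} + \tfrac{\beta}{1-q-\beta}\big)\norm{\ve_k}^2$. Combining the two estimates gives $\mathbb{E}[\norm{\ekpo}^2 \mid \vx_k] \le (1 - c_{\mA,\beta,q})\norm{\ve_k}^2$ with $c_{\mA,\beta,q}$ the displayed quantity (matching the stated constants exactly is a matter of careful bookkeeping), and the hypothesis $\tfrac{q}{q-\beta}\big(\tfrac{2\sqrt{\beta}}{\sqrt{1-q-\beta}} + \tfrac{\beta}{1-q-\beta}\big) < \smin/\smax$ is precisely the condition that $c_{\mA,\beta,q} > 0$; taking total expectations and iterating then yields the claimed geometric decay. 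I expect the main obstacle to be this threshold estimate and its tension with the good-row bound: one must argue simultaneously that the adversary cannot inflate $Q_q(\vx_k)$ --- which needs a constant fraction of \emph{uncorrupted} rows to carry genuinely large residual, hence the use of $\smax$ together with $q+\beta<1$ --- and that enough admissible uncorrupted rows retain residual controlled from below through $\smin$; these requirements pull in opposite directions, and reconciling them is exactly what forces the relative-size condition tying $\beta$, $q$, and the conditioning $\smin/\smax$. Careful treatment of the ceilings in the definition of $Q_q$, and of the fact that corrupted rows may sit anywhere in the sorted residual order, is the delicate part of the bookkeeping.
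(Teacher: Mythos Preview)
This theorem is not proved in the present paper; it is quoted from \cite{Steinerberger2021QuantileBasedRK} as background, so there is no in-paper proof to compare against directly. That said, your sketch is faithful to Steinerberger's argument, and its two key ingredients coincide with lemmas the paper \emph{does} prove for its own purposes: your threshold bound $Q_q(\xk)^2 \le \smax\norm{\ek}^2/\big((1-q-\beta)m\big)$ is exactly \cref{lem:quantileestimate}, and your Cauchy--Schwarz style bound $\sum_{i \in \tau}|\langle\va_i,\ek\rangle| \le \sigma_{\max}\sqrt{|\tau|}\,\norm{\ek}$ is \cref{lem:innerprodestimate}. The decomposition into admissible-uncorrupted and admissible-corrupted rows, the per-row error identities, and the reconciliation of the two opposing pressures through the condition on $\smin/\smax$ are all correct.

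One bookkeeping point worth flagging: your good-row lower bound
\[
\frac{1}{qm}\sum_{i\ \text{adm.\ unc.}}\langle\va_i,\ek\rangle^2 \ \ge\ \frac{\smin}{qm}\,\norm{\ek}^2,
\]
obtained by passing to a subset of size exactly $(q-\beta)m$ and invoking \eqref{smin_def}, is in fact \emph{tighter} than the coefficient $(q-\beta)\smin/(q^2 m)$ appearing in the stated $c_{\mA,\beta,q}$. So if your aim is to reproduce Steinerberger's constants verbatim you would have to imitate his slightly looser organization of this step; following your route as written yields a marginally stronger conclusion under a marginally weaker hypothesis (the factor $q/(q-\beta)$ on the left of the displayed condition would disappear). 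This is not a gap in your argument, only a mismatch with the displayed constants.
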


Informally, the convergence rate is good if the uniform restricted smallest singular value $\smin$ is well-separated from zero, which itself may be viewed as a version of the incoherence assumption mentioned above in Section~\ref{s:block-k}. Indeed, a row subsystem $\mA_\tau$ with nearly parallel rows is nearly degenerate and $\sigma_{\mathrm{min}}({\bf A}_{\tau})$ is very small. On the other hand, independent subgaussian rows are nearly mutually orthogonal with high probability (see, e.g., \cite{vershynin_2018}) and have $\sigma_{\mathrm{min}}({\bf A}_{\tau}) = O(\tau/n)$ when $\tau \gg n$. Further discussion in \cite{Steinerberger2021QuantileBasedRK} aids in understanding the relative condition on $q, \beta$, and $\smin$ of \cref{thm:steiner}, as well as drawing connections to the random matrix case studied in \cite{haddock2020quantilebased}. 

\subsection{Summary of Main Results}\label{sec:mainresults}
We introduce a new method, quantile averaged block Kaczmarz (QuantileABK), that applies the quantile-based techniques of QuantileRK to the averaged block Kaczmarz method. Namely, at each iteration a sample of rows is taken, the quantile of the corresponding subresidual is computed, and then an iteration of averaged block Kaczmarz is performed using \emph{every} row with residual entry below the quantile. We defer a full explanation of the method to \cref{sec:method}, including discussions on appropriate weights and step sizes.

\cref{thm:mainthm} shows that our method is guaranteed to converge at least linearly as long as $q, \beta$ and $\smin$ satisfy a similar constraint to that in \cref{thm:steiner}, without any assumption of randomness on $\mA$ (but still upholding the assumptions of full rank and unit norm rows). The proof of Theorem~\ref{thm:mainthm} can be found in Section~\ref{sec:proof_main}. 
\begin{theorem}\label{thm:mainthm}
Let $A \in \mathbb{R}^{m \times n}$ be of full rank with unit-norm rows. Suppose that the system $\mA \vx = \vb$ has a fraction $\beta$ of corrupted entries, and that $\beta < q < 1 - \beta$. If
\[
\frac{\sqrt{\beta}}{\sqrt{1 - q - \beta}} < \frac{\smin}{\smax},
\]
then the iterates of QuantileABK$(q)$, using a theoretically optimal step size, applied to this system satisfy

\begin{equation*}
    \norm{\xk - \xopt}^2 \leq \left(1 - \frac{c_1^2}{4c_2}\right)^k\norm{\xinit - \xopt}^2,
\end{equation*}
where
\[
c_1 = \frac{2 \smin}{qm} - \frac{2 \sqrt{\beta}\smax}{qm\sqrt{1-q-\beta}}, \qquad c_2 = \frac{\smax\smin}{q^2 m^2} - \frac{2\sqrt{\beta}\smax\smin}{q^2 m^2 \sqrt{1-q-\beta}} + \frac{\beta \sigma_{\mathrm{max}}^4}{q^2 m(1-q-\beta)}.
\]
\end{theorem}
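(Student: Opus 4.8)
The plan is to track $\norm{\xk-\xopt}^2$ directly; since the admissible set is a deterministic function of the residual at $\xkmo$, the whole iteration is deterministic, so no expectation is needed. One step of QuantileABK with uniform weights and step size $\alpha$ reads $\xk = \xkmo - \tfrac{\alpha}{\abs{\tau_k}}\mA_{\tau_k}^\trans\vv_k$, where $\tau_k$ collects the rows achieving the $\lceil qm\rceil$ smallest absolute residuals at $\xkmo$ and $\vv_k := \mA_{\tau_k}\xkmo-\vb_{\tau_k} = \mA_{\tau_k}\ekmo - \vb^c_{\tau_k}$ is the corrupted subresidual ($\vb^c$ the corruption vector, $\ekmo:=\xkmo-\xopt$). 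Using $\langle\mA_{\tau_k}\ekmo,\vv_k\rangle = \norm{\vv_k}^2 + \langle\vb^c_{\tau_k},\vv_k\rangle$, expanding the square gives
\begin{equation*}
\norm{\ek}^2 = \norm{\ekmo}^2 - \frac{2\alpha}{\abs{\tau_k}}\Big(\norm{\vv_k}^2 + \langle\vb^c_{\tau_k},\vv_k\rangle\Big) + \frac{\alpha^2}{\abs{\tau_k}^2}\norm{\mA_{\tau_k}^\trans\vv_k}^2 .
\end{equation*}
Thus the theorem reduces to two per-iteration estimates, a lower bound $\norm{\vv_k}^2+\langle\vb^c_{\tau_k},\vv_k\rangle \ge \tfrac{c_1}{2}\abs{\tau_k}\norm{\ekmo}^2$ and an upper bound $\norm{\mA_{\tau_k}^\trans\vv_k}^2 \le c_2\abs{\tau_k}^2\norm{\ekmo}^2$: the right-hand side is then a quadratic in $\alpha$ minimized at $\alpha = c_1/(2c_2)$ (this is the ``theoretically optimal'' step size) with minimum value $(1-\tfrac{c_1^2}{4c_2})\norm{\ekmo}^2$, and iterating finishes the proof.

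Both estimates rest on three elementary facts about the quantile filter, exactly the ones used in the QuantileRK analysis behind \cref{thm:steiner}: (i) $\abs{\tau_k}=\lceil qm\rceil$, at least $(q-\beta)m$ of whose rows are uncorrupted, since only $\beta m$ rows are corrupted in all; (ii) at least $(1-q-\beta)m$ uncorrupted rows have absolute residual at least $Q_q(\xkmo)$, so sandwiching the squared residual over such a set between $(1-q-\beta)m\,Q_q(\xkmo)^2$ and $\smax\norm{\ekmo}^2$ gives $Q_q(\xkmo)^2 \le \tfrac{\smax}{(1-q-\beta)m}\norm{\ekmo}^2$; (iii) every row in $\tau_k$, corrupted or not, has subresidual entry of magnitude at most $Q_q(\xkmo)$. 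Splitting $\tau_k$ into its uncorrupted part $G$ and corrupted part $B$, the subresidual equals $\mA_G\ekmo$ on $G$, while on $B$ it is a vector $\vv_B$ with $\norm{\vv_B}^2 \le \abs{B}\,Q_q(\xkmo)^2 \le \tfrac{\beta\smax}{1-q-\beta}\norm{\ekmo}^2$ by (ii)--(iii). For the linear term the corruption contribution telescopes — $\vb^c_B = \mA_B\ekmo-\vv_B$ forces $\langle\vb^c_{\tau_k},\vv_k\rangle = \langle\mA_B\ekmo,\vv_B\rangle-\norm{\vv_B}^2$ — so $\norm{\vv_k}^2+\langle\vb^c_{\tau_k},\vv_k\rangle = \norm{\mA_G\ekmo}^2 + \langle\mA_B\ekmo,\vv_B\rangle \ge \smin\norm{\ekmo}^2 - \sigma_{\max}(\mA)\norm{\ekmo}\norm{\vv_B}$, using monotonicity of $\sigma_{\min}$ under adjoining rows together with $\abs{G}\ge(q-\beta)m$ to get $\norm{\mA_G\ekmo}^2\ge\smin\norm{\ekmo}^2$. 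Substituting the bound on $\norm{\vv_B}$ produces the factor $\smin - \tfrac{\sqrt{\beta}\,\smax}{\sqrt{1-q-\beta}}$, which is positive precisely under the hypothesis $\tfrac{\sqrt{\beta}}{\sqrt{1-q-\beta}} < \tfrac{\smin}{\smax}$, and this is the quantity feeding $c_1$. The quadratic term is treated analogously, by writing $\mA_{\tau_k}^\trans\vv_k = \mA_G^\trans\mA_G\ekmo + \mA_B^\trans\vv_B$ and bounding the two pieces and their cross term with $\norm{\mA_{\tau_k}}\le\sigma_{\max}(\mA)$ together with the bounds on $\norm{\vv_B}$ and $Q_q(\xkmo)$; this routine calculation yields $c_2$.

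The main obstacle is the consequence of fact (iii): the quantile test cannot be guaranteed to discard every corrupted row — a corrupted equation might be (nearly) satisfied by $\xkmo$ — so the corrupted block $B$ genuinely contributes to the step, and the whole argument hinges on showing this contribution (the cross term $\langle\mA_B\ekmo,\vv_B\rangle$, of size $O(\sigma_{\max}(\mA)\norm{\ekmo}\norm{\vv_B})$) is dominated by the guaranteed progress from the uncorrupted block, $\norm{\mA_G\ekmo}^2\ge\smin\norm{\ekmo}^2$; this is exactly the role of the restricted-singular-value separation hypothesis. A secondary nuisance is the bookkeeping around $\lceil qm\rceil$ versus $qm$ in the counting in (i)--(ii), and the implicit requirement $(q-\beta)m\ge n$ needed for $\smin$ to be a genuine positive lower bound; both are handled exactly as in the prior QuantileRK results. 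Once the two coefficient bounds are in hand, the step-size optimization is the elementary minimization of $1-c_1\alpha+c_2\alpha^2$, giving the claimed optimal step $\alpha=c_1/(2c_2)$ and contraction factor $1-c_1^2/(4c_2)$.
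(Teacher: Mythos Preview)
Your linear-in-$\alpha$ analysis is correct and does recover the stated $c_1$: the telescoping identity $\norm{\vv_k}^2+\langle\vb^c_{\tau_k},\vv_k\rangle=\norm{\mA_G\ekmo}^2+\langle\mA_B\ekmo,\vv_B\rangle$ together with $\norm{\mA_G\ekmo}^2\ge\smin\norm{\ekmo}^2$ and the quantile bound on $\norm{\vv_B}$ gives exactly $\tfrac{c_1}{2}qm\norm{\ekmo}^2$.

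The gap is in the quadratic term, where the ``routine calculation'' you defer does \emph{not} yield the stated $c_2$. Writing $\mA_{\tau_k}^\trans\vv_k=\mW_G\ekmo+\mA_B^\trans\vv_B$ with $\mW_G:=\mA_G^\trans\mA_G$, the first summand contributes $\ekmo^\trans\mW_G^2\ekmo$, and the best upper bound available for this quantity in isolation is $\sigma_{\max}^4\norm{\ekmo}^2$, not $\smax\smin\norm{\ekmo}^2$ (indeed $\mW_G^2$ has eigenvalues up to $\sigma_{\max}^4$). Moreover the cross term $2\langle\mW_G\ekmo,\mA_B^\trans\vv_B\rangle$ is bounded by a \emph{positive} multiple of $\norm{\ekmo}^2$, whereas the middle term of the stated $c_2$ is negative. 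Your decomposition therefore produces a strictly larger quadratic constant (leading term $\sigma_{\max}^4/(q^2m^2)$ in place of $\smax\smin/(q^2m^2)$), which proves linear convergence but not with the rate claimed.

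The paper avoids this by \emph{not} separating the $\alpha$-linear and $\alpha$-quadratic pieces of the uncorrupted contribution. It writes $\ek=X-Y$ with $X:=(\mI-\tfrac{\alpha}{|\tau|}\mW_G)\ekmo$ and $Y:=\tfrac{\alpha}{|\tau|}\mA_B^\trans\vv_B$, and in $\norm{X}^2$ keeps the expression $-\tfrac{2\alpha}{|\tau|}\mW_G+\tfrac{\alpha^2}{|\tau|^2}\mW_G^2$ intact. The key spectral step is to apply $\mW_G^2\preceq\smax\mW_G$ \emph{first}, obtaining $-(\tfrac{2\alpha}{|\tau|}-\tfrac{\alpha^2\smax}{|\tau|^2})\mW_G$, and only then use $\mW_G\succeq\smin\mI$ on the combined (negative) coefficient. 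This coupling is precisely what manufactures the product $\smax\smin$ in the first term of $c_2$; it is lost once the linear and quadratic parts are bounded separately. The negative middle term of $c_2$ likewise arises from the cross term $-2\langle X,Y\rangle$, where $X$ already carries the contraction factor $(1-\tfrac{\alpha}{qm}\smin)$ in its norm bound. To reach the theorem as stated you need to reorganize around this $X$--$Y$ split rather than the linear/quadratic-in-$\alpha$ split.
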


The constants $c_1, c_2$ are difficult to interpret, so we include a different viewpoint in \cref{cor:mainthmcor} (also proved in Section~\ref{sec:proof_main}) to give a better idea of scaling. 

\begin{corollary}\label{cor:mainthmcor}
If in \cref{thm:mainthm} we choose $q$ such that for some $\epsilon \in [0,1)$
\[
\frac{\sqrt{\beta}}{\sqrt{1 - q - \beta}} = \epsilon\frac{\smin}{\smax},
\]
then the optimal step size may be expressed as
\begin{equation}\label{alpha}
\alpha_{opt} = \frac{qm(1-\epsilon)}{\smax - \epsilon(2-\epsilon)\smin},
\end{equation}
and we have the following convergence guarantee:
\[
\norm{\xk - \xopt}^2 \leq \left(1 - \frac{(1-\epsilon)^2 \smin}{\smax - \epsilon(2 - \epsilon)\smin}\right)^k \norm{\xinit - \xopt}^2.
\]
\end{corollary}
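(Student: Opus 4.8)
Corollary~\ref{cor:mainthmcor} is a direct algebraic consequence of Theorem~\ref{thm:mainthm}, so the entire proof is substitution and simplification — there is no new probabilistic or analytic content. The strategy is: (1) impose the stated relation $\sqrt{\beta}/\sqrt{1-q-\beta} = \epsilon\,\smin/\smax$ and substitute it into the formulas for $c_1$ and $c_2$ from Theorem~\ref{thm:mainthm}; (2) simplify $c_1^2/(4c_2)$ to obtain the claimed contraction factor; and (3) identify the optimal step size $\alpha_{opt}$ appearing implicitly in the proof of Theorem~\ref{thm:mainthm} (where "theoretically optimal step size" is used) and check it reduces to \eqref{alpha}.

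\textbf{Step (1): substitution.} From the hypothesis, $\sqrt{\beta} = \epsilon\,(\smin/\smax)\sqrt{1-q-\beta}$, hence $\sqrt{\beta}/\sqrt{1-q-\beta} = \epsilon\,\smin/\smax$ and, squaring, $\beta/(1-q-\beta) = \epsilon^2\,\smin^2/\smax^2$. Plugging the first of these into $c_1$ gives
\[
c_1 = \frac{2\smin}{qm} - \frac{2\smax}{qm}\cdot\epsilon\frac{\smin}{\smax} = \frac{2\smin(1-\epsilon)}{qm}.
\]
For $c_2$, the first two terms combine as $\frac{\smax\smin}{q^2m^2}(1 - 2\epsilon\frac{\smin}{\smax})$, and the last term becomes $\frac{1}{q^2m}\cdot\smax^2\cdot\frac{\beta}{1-q-\beta} = \frac{\smax^2}{q^2m}\cdot\epsilon^2\frac{\smin^2}{\smax^2}\cdot\frac{1}{m}\cdot m$ — wait, one must be careful with the $m$ versus $m^2$ bookkeeping here: the last term of $c_2$ in Theorem~\ref{thm:mainthm} has a single $m$ in the denominator and a $\sigma_{\max}^4$, whereas the first two have $m^2$, so after substitution one should factor $\smax\smin/(q^2m^2)$ out of the whole expression, which forces rewriting $\frac{\beta\smax^4}{q^2m(1-q-\beta)} = \frac{\smax\smin}{q^2m^2}\cdot\frac{m\smax^3}{\smin}\cdot\frac{\beta}{1-q-\beta} = \frac{\smax\smin}{q^2m^2}\cdot\frac{m\smax^3}{\smin}\cdot\epsilon^2\frac{\smin^2}{\smax^2}$. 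This still carries a stray factor of $m\smin/\smax$, which signals that the clean factorization is instead $\smax^2/(q^2m^2)$ times something — the bookkeeping needs to be done carefully against the exact exponents in the theorem statement, and this is precisely the one spot where an arithmetic slip is easy. Once the correct common factor is pulled out, one expects
\[
c_2 = \frac{\smax\smin}{q^2m^2}\bigl(1 - 2\epsilon\tfrac{\smin}{\smax} + \epsilon^2\tfrac{\smin}{\smax}\bigr)\;=\;\frac{\smin}{q^2m^2}\bigl(\smax - \epsilon(2-\epsilon)\smin\bigr),
\]
after verifying that the $m$-powers and $\sigma$-powers in the third term of $c_2$ do reconcile to this form (the hypothesis $\sqrt\beta/\sqrt{1-q-\beta}=\epsilon\smin/\smax<1$ and the implicit normalization should make the $m$'s cancel correctly).

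\textbf{Step (2) and (3): assemble.} With the two simplified expressions,
\[
\frac{c_1^2}{4c_2} = \frac{4\smin^2(1-\epsilon)^2/(q^2m^2)}{4\cdot\smin\bigl(\smax-\epsilon(2-\epsilon)\smin\bigr)/(q^2m^2)} = \frac{(1-\epsilon)^2\smin}{\smax - \epsilon(2-\epsilon)\smin},
\]
which is exactly the exponent base in the corollary; the contraction factor $(1 - c_1^2/(4c_2))^k$ then follows immediately from Theorem~\ref{thm:mainthm}. For $\alpha_{opt}$, I would go back to the proof of Theorem~\ref{thm:mainthm} in Section~\ref{sec:proof_main}, where the optimal step size minimizing the per-iteration bound is of the form $\alpha_{opt} = c_1/(2c_2)$ (the standard optimizer of a quadratic $1 - 2\alpha\cdot(\text{linear}) + \alpha^2\cdot(\text{quadratic})$ bound); substituting the simplified $c_1, c_2$ gives
\[
\alpha_{opt} = \frac{c_1}{2c_2} = \frac{2\smin(1-\epsilon)/(qm)}{2\smin(\smax-\epsilon(2-\epsilon)\smin)/(q^2m^2)} = \frac{qm(1-\epsilon)}{\smax - \epsilon(2-\epsilon)\smin},
\]
matching \eqref{alpha}. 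The main obstacle — such as it is — is purely the careful tracking of powers of $m$ and of $\smin,\smax$ in the third term of $c_2$ when factoring; there is no conceptual difficulty, and positivity of the denominator $\smax - \epsilon(2-\epsilon)\smin$ follows since $\epsilon\in[0,1)$ implies $\epsilon(2-\epsilon)<1$ and $\smin\le\smax$.
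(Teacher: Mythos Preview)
Your approach is correct and essentially identical to the paper's: substitute the relation into $c_1$ and $c_2$, simplify, and read off $\alpha_{opt}=c_1/(2c_2)$ and the rate $1-c_1^2/(4c_2)$.

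The one place where you hesitate --- the ``stray factor of $m\smin/\smax$'' in the third term of $c_2$ --- is not a real obstacle but an artifact of a typo in the statement of \cref{thm:mainthm}. If you look at the derivation of Term~3 in the proof of \cref{thm:mainthm} (Section~\ref{sec:proof_main}), the bound is $\alpha^2\beta\sigma_{\max}^4/\bigl(q^2m^2(1-q-\beta)\bigr)$, with $m^2$ in the denominator, not $m$; the theorem \emph{statement} dropped a power of $m$. With the correct $m^2$, the third term becomes $\epsilon^2\sigma_{q-\beta,\min}^4/(q^2m^2)$ after substitution, and the factorization
\[
c_2=\frac{\smin}{q^2m^2}\bigl(\smax-\epsilon(2-\epsilon)\smin\bigr)
\]
falls out immediately, exactly as you anticipated. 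So your bookkeeping worry is justified but resolves trivially once the typo is corrected, and the rest of your argument (Steps~2 and~3) is clean and matches the paper verbatim.
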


In general, the quantity $\smin$ is hard to estimate - both theoretically and empirically, particularly for very tall $\mA$. By specializing to the case of $\mA$ being of subgaussian-type (recall \cref{def:subgausstype}) we utilize results from \cite{haddock2020quantilebased} to estimate $\smin$ and obtain \cref{thm:subgcase}, a formal statement of the earlier \cref{thm:informalsubgcase}. The proof of Theorem~\ref{thm:subgcase} can be found in Section~\ref{sec:proof_subgaus}. 

\begin{theorem}\label{thm:subgcase}
Let $A$ be a random matrix satisfying \cref{def:subgausstype} with constants $K$ and $D$. Suppose then that the system $\mA \vx = \vb$ has a fraction $\beta$ of corrupted entries, with $\beta < q < 1-\beta$ and we have
\begin{equation}\label{eq:betacondition}
0 \le \epsilon < 1, \quad  \text{ where } \epsilon:= \frac{\beta}{C_3(1 - q + \beta)(q- \beta)^6},
\end{equation} 
and $C_3$ is an absolute constant depending only on the distribution of the rows of $\mA$. Suppose furthermore that $\mA$ has sufficiently large aspect ratio,
\[
\frac{m}{n} > C_4 \frac{1}{q-\beta}\log \frac{DK}{q-\beta}.
\]
Then the optimal step size for QuantileABK$(q)$ is 
\begin{equation}\label{eq:subgalpha}
\alpha_{opt} = c_{\epsilon, q, \beta}n,
\end{equation}
where $c_{\epsilon, q, \beta}$ is a constant depending on $\epsilon, q, \beta$. Moreover, with probability at least $1 - c\exp(-c_q m)$ the iterates of QuantileABK$(q)$, using the step size given in \cref{eq:subgalpha}, satisfy
\begin{equation*}
    \norm{\xk - \xopt}^2 \leq \left(1 - C_q\right)^k\norm{\xinit - \xopt}^2,
\end{equation*}
where $c_{q}, C_q$ depend only on $q$ (in particular, they are independent of $m$ and $n$).
\end{theorem}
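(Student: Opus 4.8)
The plan is to obtain this result as a corollary of the abstract guarantee \cref{cor:mainthmcor}, substituting for $\smin$ and $\smax$ the explicit high-probability estimates available for subgaussian-type matrices. So the proof splits into three steps: (i) establish those estimates; (ii) check that, together with assumption \eqref{eq:betacondition}, they imply the hypothesis of \cref{cor:mainthmcor} with dimension-free constants; (iii) read off the step size and contraction rate.

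For step (i) I would invoke Proposition 1 of \cite{haddock2020quantilebased} with sampling rate $q-\beta$ to lower-bound $\smin$, together with the standard upper bound $\sigma_{\max}^2(\mA) \lesssim m/n$ for a matrix with independent mean-zero isotropic subgaussian rows normalized to unit norm. The outcome is: provided $m/n > C_4\frac{1}{q-\beta}\log\frac{DK}{q-\beta}$, with probability at least $1 - c\exp(-c_q m)$ one has simultaneously $\smin \gtrsim (q-\beta)^{a}\,\frac{m}{n}$ and $\smax \lesssim \frac{m}{n}$, with the implied constants and the exponent $a$ depending only on the row distribution (through $K$ and $D$). It is the union bound over all $\binom{m}{(q-\beta)m}$ candidate index sets $\tau$ with $|\tau| = (q-\beta)m$ that forces the stated logarithmic lower bound on the aspect ratio. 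In particular, on this event $\smin/\smax \ge \rho$ for a constant $\rho = \rho(q-\beta, K, D)$ that is independent of $m$ and $n$.

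For steps (ii)--(iii): with the step-(i) estimates in hand, the abstract hypothesis $\frac{\sqrt{\beta}}{\sqrt{1-q-\beta}} < \frac{\smin}{\smax}$ of \cref{thm:mainthm}/\cref{cor:mainthmcor} becomes a relation among $\beta$, $q$ and the model constants only, and assumption \eqref{eq:betacondition} (with $C_3$ absorbing the step-(i) constants) is exactly what ensures this relation, as well as ensuring that the parameter $\epsilon_\star := \frac{\sqrt{\beta}}{\sqrt{1-q-\beta}}\cdot\frac{\smax}{\smin}$ --- which plays the role of $\epsilon$ in \cref{cor:mainthmcor} --- stays below $1$ by a dimension-free margin on the good event. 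Granting this, \cref{cor:mainthmcor} applies, and we simply read off its conclusions. The optimal step size $\alpha_{opt} = \frac{qm(1-\epsilon_\star)}{\smax - \epsilon_\star(2-\epsilon_\star)\smin}$ has denominator $\asymp m/n$, hence $\alpha_{opt} \asymp n$, which is \eqref{eq:subgalpha}; and the contraction factor $1 - \frac{(1-\epsilon_\star)^2\smin}{\smax - \epsilon_\star(2-\epsilon_\star)\smin}$, since $0 < \smin \le \smax$ and $\epsilon_\star < 1$ (so the denominator lies in $(0,\smax]$), is at most $1 - (1-\epsilon_\star)^2\frac{\smin}{\smax} \le 1 - (1-\epsilon_\star)^2\rho =: 1 - C_q$ with $C_q$ dimension-free. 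Combining with the probability from step (i) yields $\norm{\xk - \xopt}^2 \le (1-C_q)^k\norm{\xinit - \xopt}^2$ on an event of probability at least $1 - c\exp(-c_q m)$, as claimed.

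I expect the main obstacle to lie in steps (i) and (ii). For (i), extracting from Proposition 1 of \cite{haddock2020quantilebased} the lower bound on $\sigma_{\min}^2(\mA_\tau)$ that holds uniformly over \emph{all} row subsets of size $(q-\beta)m$, with the correct dependence on $q-\beta$ and under the claimed aspect ratio, is the substantive input. For (ii), the delicate bookkeeping is tracking how this estimate, once fed into \cref{cor:mainthmcor}, collapses the abstract hypothesis precisely to the explicit form \eqref{eq:betacondition} and certifies that the resulting $\epsilon_\star$ and $C_q$ are genuinely free of $m$ and $n$. A minor secondary point is the identification of the random quantity $\alpha_{opt}$ with a deterministic $c_{\epsilon, q, \beta}\,n$: on the good event $\smin$ and $\smax$ are pinned to within constant factors of $m/n$, and the rate in \cref{cor:mainthmcor} is visibly stable under constant-factor changes of the step size, so one may equally take any fixed step size of order $n$.
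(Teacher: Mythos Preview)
Your proposal is correct and follows essentially the same route as the paper: invoke Proposition~1 of \cite{haddock2020quantilebased} at level $\delta=q-\beta$ for the uniform lower bound $\smin\gtrsim (q-\beta)^3 m/n$, combine with the standard subgaussian upper bound $\smax\lesssim m/n$, and plug both into \cref{cor:mainthmcor}, bounding the denominator of the contraction term by $\smax$ to obtain a dimension-free rate. Your observation that the ``optimal'' step size is a priori random and must be replaced by a deterministic $c\,n$ via the stability in Remark~\ref{approx_rate} is a point the paper leaves implicit.
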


As a concrete example, if $\beta = 0.012$ and $\mA$ is a sufficiently tall normalized Gaussian matrix, the conditions of \cref{thm:subgcase} allow taking $q$ as large as  $0.8486$. With $\beta = 0.012$, $q = 0.8486$, we obtain a convergence rate of $C_q = C_{0.8486} \geq 0.0287$ (note that this is independent of the size of the system and decreases initial distance to the solution $10$ times in $80$ iterations).

Finally, in all the theorems, one does not have to compute the optimal step size precisely to get convergence rate of the optimal order. In particular, Remark~\ref{approx_rate} shows that with the step size $\tilde\alpha = \xi \alpha_{opt}$ with $\xi \in (0,2)$,  the convergence rate is $(\xi - \xi^2/2)$ times the ``optimal" convergence rate.

\section{Proposed Method}\label{sec:method}

Here we provide a formal description of our algorithm. Under the same heuristic as in \cite{haddock2020quantilebased, Steinerberger2021QuantileBasedRK}, we use the $q$-quantile of the absolute residual $|\mA\vx - \vb|$ as a threshold to detect and avoid projecting onto rows that are too far from the current iterate (and thus, are likely to be corrupted). Then, an iteration of averaged block Kaczmarz is performed using the $qm$ rows with residual entries less than the computed quantile, using a fixed step size $\alpha$.
\begin{center}
\begin{minipage}{.7\textwidth}
\begin{algorithm}[H]
	\caption{Quantile Averaged Block Kaczmarz}\label{alg:QBRK}
	\begin{algorithmic}[1]
		\Procedure{QuantileABK}{$\mA,\vb$, $N$, $q$, $\alpha$, $\xinit$}
		\For{$k = 1, 2, \ldots, N-1$}
		    \State Compute $Q_q(\vx_{k-1}) = q\textsuperscript{th}\text{ quantile of }\{|\va_i^T \vx_{k-1} - b_i| : i \in [m]\}$
		    \State Set $\tau = \{i \in [m] : |\va_i^T \vx_{k-1} - b_i| < Q_q(\vx_{k-1})\}$
    		\State Update $\xk = \vx_{k-1} - \frac{\alpha}{|\tau|}\sum_{i \in \tau}(\va_i^T \vx_{k-1} - b_i)\va_i$
		\EndFor
		\State \Return $\vx_N$
		\EndProcedure
	\end{algorithmic}
\end{algorithm}
\end{minipage}
\end{center}
Note here that we show the algorithm as running for a prespecified number of iterations $N$, but in practice one may use any desired stopping criterion.

We note that the iterates of both QuantileRK and QuantileABK are at least $O(\beta m)$-times more computationally intensive than the standard RK method. This is because one needs to compute the residual entries of that many rows to obtain a quantile statistic that is able to accurately detect corrupted rows.

We note that the performance of the method depends heavily on the parameters $q$ and $\alpha$. In \Cref{sec:theory}, we prove our main convergence result, including a derivation of an optimal value of $\alpha$ and constraints on $q$ to ensure convergence. We follow this with experiments in \Cref{sec:experiments} to examine the optimal choice of $\alpha$ in practice, and to show the effects of varying $q$.

\begin{remark}
Note that in \cite{Necoara2019FasterRB}, a \textit{weighted} average is taken at each iteration, whilst we take an unweighted average. We reason that in our method, there is no particular reason to weight some rows more heavily than others: whilst one may be inclined to weight rows, say, proportionally to their residual entry, this has the knock-on effect of weighting potentially corrupted rows more heavily. However, we believe that our analysis may be extended to include additional weight parameters. 
\end{remark}

\begin{remark}
We choose to use a fixed step size at each iteration, but it is possible to extend the method to have varying step size. In particular, the theoretically optimal step size derived in \cref{thm:mainthm} is difficult to estimate \textit{a priori}, and may be substituted with an adaptive step size calculated only with information available at runtime as analyzed in \cite{Necoara2019FasterRB}. Note that the QuantileSGD method proposed in \cite{haddock2020quantilebased}, like QuantileRK, also utilizes the idea of varying the step size. While QuantileRK uses the quantile of the residual to decide whether to update the current iterate, QuantileSGD always does the weighted update, with the step size determined by the quantile size (and thus changing with the iterations).  
\end{remark}

\section{Theoretical Results}
\label{sec:theory}
\subsection{Preliminaries}

We begin our theory by introducing requisite preliminary results from \cite{Steinerberger2021QuantileBasedRK, haddock2020quantilebased}, and we include their proofs for completeness. Firstly, we provide an estimate on the residual quantiles computed at each iteration. Such an estimate is necessary to bound the impact that corrupted rows passing under this threshold can have on convergence. This is (\cite{Steinerberger2021QuantileBasedRK}, Lemma~1) and is a deterministic version of (\cite{haddock2020quantilebased}, Corollary~1). 

\begin{lemma}\label{lem:quantileestimate}
Consider applying QuantileABK to the system $\mA \vx = \vb$. If $0 < q < 1 - \beta$, then the quantile computed at the $k$-th iteration satisfies
\begin{equation*}
    Q_q(\xk) \leq \frac{\sigma_{\mathrm{max}}}{\sqrt{m}\sqrt{1-q-\beta}}\norm{\xk - \xopt}.
\end{equation*}
\end{lemma}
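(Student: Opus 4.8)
The plan is to combine consistency of the underlying true system with an elementary counting argument on the squared residual entries. Write $\ek := \xk - \xopt$ and put
\[
t := \frac{\sigma_{\max}}{\sqrt{m}\sqrt{1-q-\beta}}\norm{\ek},
\]
which is finite and nonnegative since $q < 1-\beta$. Recall that $Q_q(\xk)$ is by definition the $\lceil qm\rceil$\textsuperscript{th} smallest of the $m$ numbers $\{|\va_i^\top\xk - b_i|\}_{i \in [m]}$, so it suffices to exhibit at least $\lceil qm\rceil$ indices $i$ with $|\va_i^\top\xk - b_i| < t$; for this it is enough to show that at most $m(1-q)$ indices have $|\va_i^\top\xk - b_i| \ge t$.

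First I would separate the corrupted rows, of which there are at most $\beta m$ (using only the cardinality $\norm{\vb^c}_0 \le \beta m$, never the unknown locations), from the uncorrupted rows. For an uncorrupted row $i$ we have $b_i = b^t_i = \va_i^\top\xopt$, so its residual entry is exactly $\va_i^\top\ek$. Summing over the uncorrupted indices and then enlarging the sum to all of $[m]$,
\[
\sum_{i \text{ uncorrupted}} (\va_i^\top\ek)^2 \;\le\; \norm{\mA\ek}^2 \;\le\; \sigma_{\max}^2\norm{\ek}^2 \;=\; m(1-q-\beta)\,t^2 .
\]
A Markov-type bound then shows that the number $N$ of uncorrupted rows with $|\va_i^\top\ek| \ge t$ satisfies $Nt^2 \le m(1-q-\beta)t^2$, hence $N \le m(1-q-\beta)$ (the degenerate case $\ek = 0$, where $t = 0$, is trivial since then all uncorrupted residuals vanish and there are at least $(1-\beta)m > qm$ of them).

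Combining the two contributions, the total number of indices with $|\va_i^\top\xk - b_i| \ge t$ is at most $\beta m + m(1-q-\beta) = m(1-q)$. Therefore at least $mq$ indices have residual entry $< t$; since this count is an integer it is at least $\lceil mq\rceil$, so the $\lceil mq\rceil$\textsuperscript{th} smallest residual entry is strictly less than $t$, which gives $Q_q(\xk) \le t = \frac{\sigma_{\max}}{\sqrt{m}\sqrt{1-q-\beta}}\norm{\ek}$, as claimed.

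I do not anticipate a genuine obstacle: the argument is short, and the only points requiring care are keeping the counting consistent with the ceiling in the definition of the quantile, and verifying that the bound uses nothing about which rows are corrupted beyond their number --- which is exactly what makes the estimate robust to adversarially placed corruptions. This lemma is essentially (\cite{Steinerberger2021QuantileBasedRK}, Lemma~1) restated for the block iteration, and its proof is reproduced here for completeness.
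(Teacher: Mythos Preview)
Your proof is correct and is essentially the same argument as the paper's: both bound the sum of squared uncorrupted residuals by $\sigma_{\max}^2\norm{\ek}^2$ and then apply a counting/Markov step. The only cosmetic difference is that the paper works directly with $Q_q(\xk)$ (at least $(1-q-\beta)m$ uncorrupted rows have residual $\ge Q_q(\xk)$, hence $m(1-q-\beta)Q_q(\xk)^2 \le \sigma_{\max}^2\norm{\ek}^2$), whereas you fix the target $t$ first and count in the contrapositive direction.
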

\begin{proof}
We follow \cite{Steinerberger2021QuantileBasedRK}. Let $\tau_1, \tau_2 \subset [m]$ denote the sets of indices of uncorrupted and corrupted rows respectively. Note that $|\tau_2| \leq \beta m$. We then have that
\begin{align*}
    \sum_{i \in \tau_1}|\langle \va_i, \xk \rangle - b_i|^2 &= \norm{\mA_{\tau_1}\xk - \vb_{\tau_1}}^2 \\
    &= \norm{\mA_{\tau_1}\xk - \mA_{\tau_1}\xopt}^2 \\
    &\leq \norm{\mA_{\tau_1}}^2\norm{\xk - \xopt}^2 \\
    &\leq \norm{\mA}^2\norm{\xk - \xopt}^2 \\
    &= \sigma_{\mathrm{max}}^2\norm{\xk - \xopt}^2.
\end{align*}

Next, note that by the definition of $Q_q(\xk)$, at least $(1-q)m$ rows have absolute residual entry greater than $Q_q(\xk)$, and at least $(1-q-\beta)m$ of those are uncorrupted. Therefore,
\begin{equation*}
    m(1-q-\beta)Q_q(\xk)^2 \leq \sum_{i \in \tau_1}|\langle \va_i, \xk\rangle - b_i|^2 \leq \sigma_{\mathrm{max}}^2\norm{\xk - \xopt}^2.
\end{equation*}
Rearranging gives the result.
\end{proof}

Next, we provide an estimate on the coherence of any subset of rows of $\mA$ of fixed size. This is necessary to control the adversarial case in which corruptions occur on coherent rows. We replicate (\cite{haddock2020quantilebased}, Lemma 4), but without randomness assumptions on $\mA$.

\begin{lemma}\label{lem:innerprodestimate}
Let $\mA \in \mathbb{R}^{m \times n}$ and let $\vx \in \mathbb{R}^n$. Then for every set of row indices $\tau \subseteq [m]$, we have
\[
\sum_{i \in \tau}|\langle \vx, \va_i \rangle | \leq \sigma_{\mathrm{max}} \sqrt{|\tau|}\norm{\vx}.
\]
\end{lemma}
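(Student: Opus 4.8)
The statement is $\sum_{i \in \tau} |\langle \vx, \va_i\rangle| \le \sigma_{\max}\sqrt{|\tau|}\,\norm{\vx}$, and the natural route is to recognize $\sum_{i \in \tau} |\langle \vx, \va_i\rangle|$ as an $\ell_1$ norm of the vector $\mA_\tau \vx \in \R^{|\tau|}$ and then pass to the $\ell_2$ norm. Concretely, I would first write $\sum_{i \in \tau} |\langle \vx, \va_i\rangle| = \norm{\mA_\tau \vx}_1$. The key elementary inequality is that for any vector $\vv \in \R^d$ one has $\norm{\vv}_1 \le \sqrt{d}\,\norm{\vv}_2$ (an immediate consequence of Cauchy--Schwarz applied to $\vv$ against the all-ones vector). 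Applying this with $d = |\tau|$ gives $\norm{\mA_\tau \vx}_1 \le \sqrt{|\tau|}\,\norm{\mA_\tau \vx}_2$.

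Next I would bound $\norm{\mA_\tau \vx}_2 \le \norm{\mA_\tau}\,\norm{\vx} = \sigma_{\max}(\mA_\tau)\,\norm{\vx}$ by definition of the operator norm, and then observe that restricting to a subset of rows cannot increase the operator norm: $\sigma_{\max}(\mA_\tau) \le \sigma_{\max}(\mA) = \sigma_{\max}$. (This last fact follows since $\norm{\mA_\tau \vx}^2 = \sum_{i\in\tau}\langle\va_i,\vx\rangle^2 \le \sum_{i\in[m]}\langle\va_i,\vx\rangle^2 = \norm{\mA\vx}^2$, so the Rayleigh quotient for $\mA_\tau$ is dominated by that for $\mA$.) Chaining the three inequalities yields $\sum_{i\in\tau}|\langle\vx,\va_i\rangle| \le \sqrt{|\tau|}\,\sigma_{\max}\norm{\vx}$, which is exactly the claim.

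There is essentially no obstacle here: the lemma is a two-line consequence of Cauchy--Schwarz plus monotonicity of the operator norm under row restriction, and it is noted in the excerpt as a derandomized version of a known lemma. The only things worth stating carefully are the inequality $\norm{\vv}_1 \le \sqrt{d}\norm{\vv}_2$ and the reduction $\sigma_{\max}(\mA_\tau) \le \sigma_{\max}(\mA)$; both are standard and require no computation. So the ``hard part'' is really just being explicit about which norm is which. I would present it as a short displayed chain of inequalities.
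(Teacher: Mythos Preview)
Your proof is correct and essentially the same as the paper's. The paper packages the argument via a sign vector $\vs$ (with $s_i=\operatorname{sign}\langle\vx,\va_i\rangle$ on $\tau$, zero elsewhere) and writes $\sum_{i\in\tau}|\langle\vx,\va_i\rangle|=\langle\vx,\mA^\top\vs\rangle\le\|\mA^\top\vs\|\,\|\vx\|\le\sigma_{\max}\sqrt{|\tau|}\,\|\vx\|$; your $\ell_1$--$\ell_2$ inequality on $\mA_\tau\vx$ followed by $\sigma_{\max}(\mA_\tau)\le\sigma_{\max}(\mA)$ is the dual phrasing of the same Cauchy--Schwarz step.
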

\begin{proof}
As in \cite{haddock2020quantilebased}, let $\vs \in \mathbb{R}^m$ have entries
\[
s_i = \begin{cases} \operatorname{sign}(\langle \vx, \va_i \rangle), & \text{ if $i \in \tau$} \\
0, &\text{ otherwise,} \end{cases}
\]
for $i \in [m]$. Then we have
\[
\sum_{i \in \tau} |\langle \vx, \va_i \rangle | = \sum_{i = 1}^m \langle \vx, s_i \va_i \rangle = \langle \vx, \sum_{i = 1}^m s_i \va_i\rangle \leq \norm{\sum_{i = 1}^m s_i \va_i}\norm{\vx} = \norm{\mA^\top \vs}\norm{\vx} \leq \sigma_{\mathrm{max}}\sqrt{|\tau|}\norm{\vx},
\]
as desired.
\end{proof}

Lastly, we give a bound on the norm of the sum of a block of rows of $\mA$. This will be used in conjunction with \cref{lem:quantileestimate} to bound the disruptive effects of corrupted rows that pass under the quantile threshold. We note that to the best of our knowledge, \cref{lem:normsum} is a new result.

\begin{lemma}\label{lem:normsum}
Let $\mA \in \mathbb{R}^{m \times n}$. Then for every set of row indices $\tau \subseteq [m]$, we have
\[
\norm{\sum_{i \in \tau} \va_i}^2 \leq \smax |\tau|.
\]
\end{lemma}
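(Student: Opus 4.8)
The plan is to recognize the sum $\sum_{i\in\tau}\va_i$ as a matrix-vector product and then invoke the operator norm of $\mA$, much as in the proof of \cref{lem:innerprodestimate} but with an even simpler choice of auxiliary vector (no signs are needed here). Concretely, I would let $\vs\in\mathbb{R}^m$ be the indicator vector of $\tau$, that is, $s_i = 1$ if $i\in\tau$ and $s_i = 0$ otherwise, so that $\sum_{i\in\tau}\va_i = \sum_{i=1}^m s_i\va_i = \mA^\top\vs$.

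The key step is then submultiplicativity of the operator norm: $\norm{\mA^\top\vs}\le\norm{\mA^\top}\norm{\vs}$. Since $\norm{\mA^\top}=\norm{\mA}=\sigma_{\mathrm{max}}(\mA)$ and $\norm{\vs}^2=\sum_{i\in\tau}1=|\tau|$, squaring gives $\norm{\sum_{i\in\tau}\va_i}^2\le\sigma_{\mathrm{max}}^2(\mA)\,|\tau| = \smax|\tau|$, which is exactly the claim.

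There is no genuine obstacle here; the whole argument is a single line once one spots the reformulation. The only point worth a remark is why one should not simply use the triangle inequality $\norm{\sum_{i\in\tau}\va_i}\le\sum_{i\in\tau}\norm{\va_i}$: that bound is weaker — by a factor that can be as large as roughly $|\tau|/\smax$ — precisely in the regime of interest, namely when the rows indexed by $\tau$ are close to mutually orthogonal so that $\sigma_{\mathrm{max}}^2(\mA_\tau)\ll|\tau|$. It is exactly in this incoherent regime that \cref{lem:normsum} will later be combined with \cref{lem:quantileestimate} to control the perturbation caused by corrupted rows that slip below the quantile threshold in the averaged update.
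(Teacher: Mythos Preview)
Your proposal is correct and is essentially identical to the paper's own proof: both introduce the indicator vector $\vs$ of $\tau$, write $\sum_{i\in\tau}\va_i = \mA^\top\vs$, and apply $\norm{\mA^\top\vs}\le\norm{\mA^\top}\norm{\vs}$ together with $\norm{\vs}^2=|\tau|$.
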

\begin{proof}
We use a similar trick to \cref{lem:innerprodestimate}. Namely, let $\vs \in \mathbb{R}^m$ have entries
\[
s_i = \begin{cases} 1, & \text{ if $i \in \tau$} \\ 0 & \text{ otherwise,}\end{cases}
\]
for $i \in [m]$. Note that $\norm{\vs}^2 = |\tau|$. Then we have that
\[
\norm{\sum_{i \in \tau} \va_i}^2 = \norm{\mA^T \vs}^2 \leq \norm{\mA^T}^2 \norm{\vs}^2 = \smax |\tau|,
\]
as claimed.
\end{proof}
\subsection{General Case}\label{sec:proof_main}
Armed with our lemmas from the previous section, we are now ready to prove \cref{thm:mainthm}.
\begin{proof}[Proof of \cref{thm:mainthm}]
Denote by $\tau$ the set of row indices passing the quantile test at iteration $k+1$, i.e. 
\[
\tau = \{i \in [m] : |b_i - \va_i^\top \xk| \leq Q_q(\xk)\}.
\]
Denote by $\tau_1, \tau_2 \subset \tau$ the subsets of indices corresponding to uncorrupted and corrupted rows respectively. Note that we have $|\tau| = qm$, $|\tau_1| \geq (q-\beta)m$, $|\tau_2| \leq \beta m$. As is typical in Kaczmarz-esque convergence proofs, we now attempt to bound $\norm{\xkpo - \xopt}^2$ in terms of $\norm{\xk - \xopt}^2$. We have:
\begin{align*}
    \norm{\xkpo - \xopt}^2 &= \norm{\xk - \frac{\alpha}{|\tau|}\sum_{i \in \tau}(\va_i^T \vx_{k} - b_i)\va_i - \xopt}^2 \\
    &= \norm{\xk - \frac{\alpha}{|\tau|}\sum_{i \in \tau_1}(\va_i^T \vx_{k} - b_i)\va_i - \frac{\alpha}{|\tau|}\sum_{i \in \tau_2}(\va_i^T \vx_{k} - b_i)\va_i - \xopt}^2 \\
    &= \norm{(\xk - \xopt) - \frac{\alpha}{|\tau|}\sum_{i \in \tau_1}\va_i \va_i^T (\xk - \xopt) - \frac{\alpha}{|\tau|}\sum_{i \in \tau_2}(\va_i^T \xk - b_i) \va_i}^2 \\
    &=\norm{X - Y}^2 = \norm{X}^2 - 2\left\langle X, Y \right\rangle + \norm{Y}^2, 
\end{align*}
where $X := \left(\mI - \frac{\alpha}{|\tau|}\mA_{\tau_1}^T \mA_{\tau_1}\right)\ek$ with $\ek = \xk - \xopt$, and $Y := \frac{\alpha}{|\tau|}\sum\limits_{i \in \tau_2}(\va_i^T \xk - b_i)\va_i$. We proceed to analyze these three terms individually. 

\bigskip

\textbf{Term 1:} For the first uncorrupted term $\|X\|^2$, we pursue an analysis similar to that of \cite{Necoara2019FasterRB}, and let $\mW_{\tau_1} := \mA_{\tau_1}^\top\mA_{\tau_1}$. We then have:
\begin{align}
   \|X\|^2 = \norm{\left(\mI - \frac{\alpha}{|\tau|}\mW_{\tau_1}\right)\ek}^2 &= \ek^\top \left(\mI - \frac{\alpha}{|\tau|}\mW_{\tau_1}\right)^2\ek \nonumber\\
    &= \ek^\top \left(\mI - 2\frac{\alpha}{|\tau|}\mW_{\tau_1} + \frac{\alpha^2}{|\tau|^2}\mW_{\tau_1}^2\right)\ek. \label{term1}
\end{align}
Then, we estimate under the positive semi-definite (Loewner) ordering: $$\smin \leq \sigma_{\text{min}}^2(\mA_{\tau_1}) \le \mW_{\tau_1}  \leq \lambda_{\text{max}}(\mW_{\tau_1}) = \sigma_{\text{max}}^2(\mA_{\tau_1}) \leq \sigma_{\text{max}}^2,$$ 
recalling that $\sigma_{\text{max}}^2 := \sigma_{\text{max}}^2(\mA)$, and the smallest restricted singular values $\smin$ is defined as per $\eqref{smin_def}$. Furthermore, to ensure a decrease in norm at each iteration, we require $2\alpha/|\tau|  - \alpha^2\sigma_{\text{max}}^2/|\tau|^2 \geq 0$. This is a less restrictive bound than the later condition on $\alpha$ for convergence in \cref{eq:alphacond}, so we proceed. From \label{term1} we have
\begin{align*}
       \|X\|^2 &\leq \ek^\top \left(\mI - 2\frac{\alpha}{|\tau|}\mW_{\tau_1} + \frac{\alpha^2}{|\tau|^2}\sigma_{\text{max}}^2\mW_{\tau_1}\right)\ek \\
    &\leq \left(1 - \left(\frac{2\alpha}{|\tau|} - \frac{\alpha^2}{|\tau|^2}\sigma_{\text{max}}^2\right)\sigma_{\text{min}}^2(\mA_{\tau_1})\right)\norm{\ek}^2 \\
    &\leq \left(1 - \left(\frac{2\alpha}{qm} - \frac{\alpha^2}{q^2 m^2}\sigma_{\text{max}}^2\right)\smin\right)\norm{\ek}^2.
\end{align*}

\bigskip

\textbf{Term 2:}
For the scalar product between uncorrupted and accepted but corrupted parts $\langle X, Y\rangle$, we make use of \cref{lem:innerprodestimate}. We have
\begin{align*}
    \langle X, Y\rangle &\leq \frac{2\alpha}{|\tau|}\sum_{i \in \tau_2}\left|\left\langle \left(\mI - \frac{\alpha}{|\tau|}\mW_{\tau_1}\right)\ek, (\va_i^\top \xk - b_i)\va_i\right\rangle \right| \\
    &\leq \frac{2\alpha Q_q(\xk)}{|\tau|}\sqrt{|\tau_2|}\sigma_{\text{max}}\norm{\left(\mI - \frac{\alpha}{|\tau|}\mW_{\tau_1}\right)\ek} \\
    &\leq \frac{2\alpha Q_q(\xk)\sqrt{|\tau_2|}}{|\tau|}\sigma_{\text{max}}\left(1 - \frac{\alpha}{|\tau|}\smin\right)\norm{\ek} \\
    &\leq \frac{2\alpha\sqrt{\beta}\sigma_{\text{max}}^2}{qm\sqrt{1-q-\beta}}\left(1 -\frac{\alpha}{qm}\smin\right)\norm{\ek}^2.
\end{align*}
Here we use that $|\tau| = qm$, $|\tau_2| \leq \beta m$, and estimate $Q_q(\xk)$ using \cref{lem:quantileestimate}.

\bigskip

\textbf{Term 3:} Lastly, we estimate the maximal total impact of the residual constrained corrupted equations, making use of both \cref{lem:quantileestimate} and \cref{lem:normsum}:
\begin{align*}
    \|Y\|^2 &= \norm{\frac{\alpha}{|\tau|}\sum_{i \in \tau_2}(\va_i^\top \xk - b_i)\va_i}^2 \\
    &= \frac{\alpha^2}{|\tau|^2}\norm{\sum_{i \in \tau_2}(\va_i^\top \xk - b_i)\va_i}^2 \\
    &\leq \frac{\alpha^2 Q_q(\xk)^2}{|\tau|^2}\norm{\sum_{i \in \tau_2} \va_i}^2\\
    &\leq \frac{\alpha^2 Q_q(\xk)^2}{|\tau|^2}\sigma_{\text{max}}^2 |\tau_2| \\
    &\leq \frac{\alpha^2 \beta \sigma_{\text{max}}^4}{q^2 m^2(1-q-\beta)}\norm{\ek}^2.
\end{align*}

\bigskip

\textbf{Bringing all three estimates together} yields
\begin{align}\label{rate}
    \norm{\ekpo}^2 &\leq \biggl[1 - \left(\frac{2\alpha}{qm} - \frac{\alpha^2}{q^2 m^2}\sigma_{\text{max}}^2\right)\smin + \frac{2\alpha \sqrt{\beta}\sigma_{\text{max}}^2}{qm\sqrt{1-q-\beta}}\left(1 - \frac{\alpha}{qm}\smin\right)\nonumber\\
    & \qquad+ \frac{\alpha^2 \beta \sigma_{\text{max}}^4}{q^2 m^2(1-q-\beta)}\biggr]\norm{\ek}^2 = \biggl(1 - c_1\alpha + c_2\alpha^2\biggr)\norm{\ek}^2,
\end{align}
where
\begin{equation}\label{c1}
c_1 = \frac{2 \smin}{qm} - \frac{2 \sqrt{\beta}\sigma_{\text{max}}^2}{qm\sqrt{1-q-\beta}},
\end{equation}
\begin{equation}\label{c2}
c_2 = \frac{\sigma_{\text{max}}^2\smin}{q^2 m^2} - \frac{2\sqrt{\beta}\sigma_{\text{max}}^2\smin}{q^2 m^2 \sqrt{1-q-\beta}} + \frac{\beta \sigma_{\text{max}}^4}{q^2 m^2(1-q-\beta)}.
\end{equation}

In order to achieve convergence we must have $c_1 > 0$. This is equivalent to
\begin{equation*}
    \frac{\sqrt{\beta}}{\sqrt{1-q-\beta}} < \frac{\smin}{\sigma_{\text{max}}^2},
\end{equation*}
which is reminiscent of the relative conditions imposed on $q$ and $\beta$ in \cref{thm:steiner}, though slightly relaxed. With this restriction, we then have convergence for all $\alpha$ such that 
\begin{equation}\label{eq:alphacond}
1 - c_1 \alpha + c_2 \alpha^2 < 1,
\end{equation}
equivalently, $\alpha \in (0, c_1/c_2)$, with an optimal choice of $\alpha := c_1/2c_2$. With this optimal choice, our per-iteration guarantee becomes

\begin{equation*}
     \norm{\ekpo}^2 \leq \left(1 - \frac{c_1^2}{4c_2}\right)\norm{\ek}^2.
\end{equation*}
Induction then yields the result.
\end{proof}
\begin{remark}[Non-optimal choices of $\alpha$]\label{approx_rate}
Note that since the convergence rate has quadratic dependence on $\alpha$ \eqref{rate}, taking $\alpha = \xi \alpha_{opt}$ with $\xi \in (0,2)$ results in the convergence rate that is $(\xi - \xi^2/2)$ times the ``optimal" convergence rate. This implies certain stability in the choice of $\alpha$: an approximation within a small constant factor does not change the dependence of the convergence rate on any characteristics of the matrix $\bf A$. Further, we focus on estimating the optimal step size $\alpha = \alpha_{opt}$.
\end{remark}

We proceed now to prove \cref{cor:mainthmcor}, in effect giving a simplification of the convergence rate derived in \cref{thm:mainthm}.

\begin{proof}[Proof of \cref{cor:mainthmcor}]

Given that for some $\epsilon \in (0,1)$,
\[
\frac{\sqrt{\beta}}{\sqrt{1-q-\beta}} = \epsilon\frac{\smin}{\sigma_{\text{max}}^2},
\]
we may simplify the expression for the rate via its components $c_1$ and $c_2$ given by \eqref{c1} and \eqref{c2}. Specifically, it simplifies to  $$c_1 = \frac{2(1-\epsilon)\smin}{qm}
$$
and
\begin{align*}
    c_2 &= \frac{\sigma_{\text{max}}^2\smin}{q^2 m^2} - \frac{2\sqrt{\beta}\sigma_{\text{max}}^2\smin}{q^2 m^2 \sqrt{1-q-\beta}} + \frac{\beta \sigma_{\text{max}}^4}{q^2 m^2(1-q-\beta)} \\
    &= \frac{\sigma_{\text{max}}^2 \smin}{q^2 m^2} - \frac{2 \epsilon \sigma_{q-\beta, \text{min}}^4}{q^2 m^2} + \frac{\epsilon^2 \sigma_{q-\beta, \text{min}}^4}{q^2 m^2} \\
    &= \frac{\smin}{q^2 m^2}\left(\sigma_{\text{max}}^2 - \epsilon(2-\epsilon)\smin\right).
\end{align*}
We can thus express the theoretical optimal step size as
\[
\alpha = \frac{c_1}{2c_2} = \frac{qm(1-\epsilon)}{\sigma_{\text{max}}^2 - \epsilon(2-\epsilon)\smin},
\]
and our guaranteed convergence rate as
\[
1 - \frac{c_1^2}{4c_2} = 1 - \frac{(1-\epsilon)^2 \smin}{\sigma_{\text{max}}^2 - \epsilon(2 - \epsilon)\smin}.
\]

\end{proof}

\begin{remark}
Our convergence rate in the general case is difficult to compare with the rate for QuantileRK given in \cref{thm:steiner}, as both expressions are complex and quite different. However, comparing leading terms one may show that our rate is $\mathcal{O}(\smin / \smax)$, and the rate found in \cref{thm:steiner} is $\mathcal{O}(\smin / m)$, showing our method yields a speedup by a factor of $m/\smax \ge 1$ (recall that the normalization of the rows ensures that $m = \|\mA\|_F^2 \ge \smax$). In the next section, we are able to make this more precise for the particular case that $\mA$ satisfies the random matrix heuristic given in \cref{def:subgausstype}.
\end{remark}
\subsection{Subgaussian Case}\label{sec:proof_subgaus}

In this section we take the point of view of \cite{haddock2020quantilebased}, namely that $\mA$ belongs to the class of random matrices described by \cref{def:subgausstype}. Within this setting, we will show that $\smin$ and $\smax$ are both on the order of $m/n$, giving rise to \cref{thm:subgcase}: i.e., the theoretical convergence rate in this case is not dependent on $m$ or $n$.

As mentioned in \cite{haddock2020quantilebased} and discussed in greater detail in \cite{Steinerberger2021QuantileBasedRK}, a standard example of a matrix satisfying \cref{def:subgausstype} is one whose rows have been sampled independently from the uniform distribution on the sphere. Alternatively, one may sample rows from the standard multivariate Gaussian distribution, and then normalize. The benefit of introducing this random matrix model is that the spectra of such matrices are well studied. In particular, it allows for a high probability uniform lower bound on the smallest singular values of uniform-sized submatrices of $\mA$: we state (\cite{haddock2020quantilebased}, Proposition 1) below.
\begin{prop}[\cite{haddock2020quantilebased}, Proposition 1]\label{prop:haddprop1}
Let $\delta \in (0,1]$ and let $\mA \in \mathbb{R}^{m \times n}$ satisfy \cref{def:subgausstype} with constants $D$ and $K$. Then there exist absolute constants $C_1, C_2 > 0$ such that if $\mA$ has large enough aspect ratio, namely,
\[
\frac{m}{n} > C_1 \frac{1}{\delta}\log\frac{DK}{\delta},
\]
then the following high probability uniform lower bound holds for the smallest singular values of all its row submatrices that have at least $\delta m$ rows.
\begin{equation*}
    \mathbb{P}\left(\inf_{\tau \subseteq [m], |\tau| \geq \delta m}\sigma_{\mathrm{min}}(\mA_\tau) \geq \frac{\delta^{3/2}}{24D}\sqrt{\frac{m}{n}}\right) \geq 1 - 3\exp(C_2 \delta m).
\end{equation*}
\end{prop}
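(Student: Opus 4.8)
The plan is to translate the claim into a uniform anti-concentration statement for the inner products $\langle\va_i,\vx\rangle$ over $\vx\in S^{n-1}$, prove the pointwise version from the bounded-density hypothesis, and then make it uniform at a cost matching the assumed aspect ratio. As a first reduction I would use that $\mA_{\tau'}^\top\mA_{\tau'}\succeq\mA_\tau^\top\mA_\tau$ whenever $\tau\subseteq\tau'$, so $\tau\mapsto\sigma_{\mathrm{min}}(\mA_\tau)$ is monotone under inclusion and it suffices to treat every $\tau$ of the fixed size $s:=\lceil\delta m\rceil$. Since $\sigma_{\mathrm{min}}(\mA_\tau)^2=\inf_{\vx\in S^{n-1}}\sum_{i\in\tau}\langle\va_i,\vx\rangle^2$ and $\min_{|\tau|=s}\sum_{i\in\tau}\langle\va_i,\vx\rangle^2$ equals the sum of the $s$ smallest numbers among $\{\langle\va_i,\vx\rangle^2\}_{i=1}^m$, the proposition reduces to: on an event of probability $\ge 1-3e^{-c\delta m}$, every $\vx\in S^{n-1}$ has at most $\delta m/2$ indices $i$ with $|\langle\va_i,\vx\rangle|$ below a threshold of order $\delta/(D\sqrt n)$ — for then at least $\delta m/2$ of the $s$ smallest squared inner products each contribute $\gtrsim\delta^2/(D^2n)$, giving $\sigma_{\mathrm{min}}(\mA_\tau)^2\gtrsim\delta^3m/(D^2n)$ and hence the stated bound after a square root and a chase of constants.

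The pointwise ingredient is an anti-concentration bound: for fixed $\vx\in S^{n-1}$ I would combine the marginal density bound $D\sqrt n$ on the entries $a_{ij}$ (condition 4 of \cref{def:subgausstype}) with the isotropy and subgaussianity of $\sqrt n\,\va_i$ (conditions 2--3) to get $\Pb{|\langle\va_i,\vx\rangle|\le t}\le CD\sqrt n\,t$ on the relevant scale. Choosing $t_0$ with $CD\sqrt n\,t_0\le c\delta$ for a small absolute $c$ makes $\Pb{|\langle\va_i,\vx\rangle|\le t_0}\le c\delta$, so $N_\vx:=\#\{i:|\langle\va_i,\vx\rangle|\le t_0\}$ is a sum of $m$ independent indicators of mean $\le c\delta m$, and a Chernoff bound then yields $\Pb{N_\vx\ge\delta m/2}\le(e/4)^{\delta m/2}\le e^{-c'\delta m}$ once $c$ is small enough that the binomial tail beats the factor $(2e/\delta)^{\delta m/2}$ coming from choosing which $\delta m/2$ indices are small.

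It remains to make this uniform over $S^{n-1}$, which I expect to be the delicate part. A straightforward $\eta$-net at scale $\eta\asymp t_0$ has entropy $n\log(CD\sqrt n/\delta)$, and the union bound then needs $m/n\gtrsim\tfrac1\delta(\log\tfrac D\delta+\log n)$ — an extra $\log n$ that the stated hypothesis $m/n>C_1\tfrac1\delta\log\tfrac{DK}\delta$ does not carry. To remove it I would split $S^{n-1}$ into compressible vectors (well approximated by $O(\mu n)$-sparse unit vectors, $\mu$ a small constant) and incompressible ones: for compressible $\vx$, a union over the $\binom{n}{\mu n}$ sparsity patterns together with a coarse net on each pattern has entropy only $O(\mu n\log(C/\mu))$, free of $\log n$; for incompressible $\vx$ one lower-bounds $\norm{\mA_\tau\vx}$ by a distance-to-the-span argument (in the style of Rudelson--Vershynin), whose failure probability is $e^{-c\delta m}$ once $s=\lceil\delta m\rceil$ exceeds $n$ by the required factor, and it is exactly here that $\log\tfrac{DK}\delta$ is consumed. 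The net-to-sphere transfer itself is deterministic: if $\norm{\vx-\vx'}\le\eta$ then $|\langle\va_i,\vx\rangle-\langle\va_i,\vx'\rangle|\le\eta$ since $\norm{\va_i}=1$, so with $\eta=t_0/2$ the count of small inner products of $\vx$ at threshold $t_0/2$ is dominated by that of $\vx'$ at threshold $t_0$.

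Assembling the pieces, on the intersection of the finitely many good events — total failure probability $\le 3e^{-c\delta m}$ — every $\vx\in S^{n-1}$ has fewer than $\delta m/2$ indices with $\langle\va_i,\vx\rangle^2\le t_0^2/4$, so $\sum_{i\in\tau}\langle\va_i,\vx\rangle^2\ge(\delta m/2)(t_0^2/4)\asymp\delta^3m/(D^2n)$ for every $|\tau|\ge\delta m$, and choosing the constants in $t_0$ and in the net/distance steps favorably delivers $\sigma_{\mathrm{min}}(\mA_\tau)\ge\tfrac{\delta^{3/2}}{24D}\sqrt{m/n}$. I expect the two genuine obstacles to be (i) the anti-concentration estimate with the correct $D\sqrt n$ scaling, which needs the compressible/incompressible dichotomy already at the level of a single row, since the marginal-density bound alone is too weak along nearly-sparse directions; and (ii) removing the spurious $\log n$ from the uniformization so that the result holds under precisely the stated aspect-ratio assumption. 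The monotonicity reduction, the Chernoff step, and the deterministic net transfer are routine once these are in hand.
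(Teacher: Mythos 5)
First, a point of reference: the paper does not prove this proposition at all --- it is imported verbatim from \cite{haddock2020quantilebased} (their Proposition 1) and used as a black box, so there is no in-paper argument to compare against. Judged on its own terms, your skeleton is the right one and matches the standard proof of such statements: monotonicity of $\sigma_{\min}(\mA_\tau)$ under adding rows reduces to $|\tau|=\lceil\delta m\rceil$; the identity $\min_{|\tau|=s}\norm{\mA_\tau\vx}^2=\sum(\text{$s$ smallest }\langle\va_i,\vx\rangle^2)$ eliminates the union over $\tau$; and a small-ball count plus Chernoff gives the pointwise bound. But two steps are not actually closed. The more serious one is the anti-concentration estimate $\Pb{|\langle\va_i,\vx\rangle|\le t}\le CD\sqrt{n}\,t$ for \emph{arbitrary} unit $\vx$: condition 4 of \cref{def:subgausstype} bounds only the marginal densities of the individual entries, and the entries within a row are not assumed independent, so this bound does not follow from the hypotheses by any argument you give. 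You flag this yourself as ``obstacle (i)'' and gesture at a compressible/incompressible dichotomy at the level of a single row, but that is precisely the content that would need to be supplied; as written the pointwise ingredient, on which everything else rests, is asserted rather than proved.

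The second issue is the uniformization, where you diagnose the problem correctly but choose the hard route. Transferring the small-ball \emph{counts} from a net to the sphere forces the net scale down to $t_0\asymp\delta/(D\sqrt n)$ and hence costs an extra $\log n$, which you then propose to remove with Rudelson--Vershynin-style machinery for incompressible vectors. The standard fix is much lighter: transfer the quantity $f(\vx):=\min_{|\tau|=s}\norm{\mA_\tau\vx}$ itself, which is Lipschitz on the sphere with constant $\norm{\mA}\lesssim K\sqrt{m/n}$ (cf.\ \cref{thm:boundonsmax}). Since the target lower bound on $f$ is also of order $(\delta^{3/2}/D)\sqrt{m/n}$, a net at the \emph{constant} scale $\eta\asymp\delta^{3/2}/(DK)$ suffices, its entropy is $n\log(CDK/\delta)$ with no $\log n$, and the union bound then requires exactly $m/n\gtrsim\frac1\delta\log\frac{DK}{\delta}$ --- which also explains why $K$ appears inside the logarithm of the stated aspect-ratio condition, something your version never accounts for. (Minor: the failure probability in the displayed statement should read $3\exp(-C_2\delta m)$; your $1-3e^{-c\delta m}$ is the intended form.)
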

Equipped with this result, taking $\delta = q-\beta$ gives the following bound on our key quantity of interest $\smin(\mA)$.
\begin{corollary}
Suppose that $\mA$ satisfies Assumptions 1 and 2, and let $C_1, C_2$ be the absolute constants arising from \cref{prop:haddprop1} upon taking $\delta = q-\beta$. If
\[
\frac{m}{n} > C_1 \frac{1}{q-\beta}\log \frac{DK}{q-\beta},
\]
then with probability at least $1 - 3\exp(-C_2(q-\beta)m)$,
\[
\smin \geq \frac{(q-\beta)^3}{(24D)^2}\frac{m}{n}.
\]
\end{corollary}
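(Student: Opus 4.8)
The plan is to obtain the statement as a direct specialization of \cref{prop:haddprop1} with the parameter choice $\delta = q - \beta$. This is a legitimate choice: the hypothesis $\beta < q < 1 - \beta$ forces $q - \beta \in (0,1]$, so $\delta = q-\beta$ lies in the admissible range for \cref{prop:haddprop1}. Under this substitution, the aspect-ratio requirement $\frac{m}{n} > C_1 \frac{1}{\delta}\log\frac{DK}{\delta}$ becomes precisely the hypothesis assumed in the corollary, so \cref{prop:haddprop1} applies verbatim and yields, with probability at least $1 - 3\exp(-C_2(q-\beta)m)$, the uniform bound
\[
\inf_{\tau \subseteq [m],\, |\tau| \geq (q-\beta)m} \sigma_{\min}(\mA_\tau) \;\geq\; \frac{(q-\beta)^{3/2}}{24D}\sqrt{\frac{m}{n}}.
\]

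Next I would translate this into a statement about $\smin$. Recall from \eqref{smin_def} that $\smin = \inf_{\tau \subset [m],\, |\tau| = (q-\beta)m}\sigma_{\min}^2(\mA_\tau)$, i.e., the infimum is taken over subsets of size \emph{exactly} $(q-\beta)m$. Since every such $\tau$ satisfies $|\tau| \geq (q-\beta)m$, it is one of the sets controlled by the event above; hence, on that event, $\sigma_{\min}(\mA_\tau) \geq \frac{(q-\beta)^{3/2}}{24D}\sqrt{m/n}$ for each of them. Squaring (both sides are nonnegative, so the inequality is preserved) and taking the infimum over all such $\tau$ gives $\smin \geq \frac{(q-\beta)^3}{(24D)^2}\frac{m}{n}$, as claimed. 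A minor bookkeeping point is that $(q-\beta)m$ should be read as $\lceil (q-\beta)m\rceil$ (or one assumes it is an integer); this rounding does not affect the argument.

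I do not expect any genuine obstacle here: the analytic content — the high-probability uniform spectral lower bound for row submatrices of a subgaussian-type matrix — is entirely encapsulated in \cref{prop:haddprop1}, which we use as a black box. The only subtlety worth a sentence is the bookkeeping between ``subsets of size exactly $(q-\beta)m$'' in the definition of $\smin$ and ``subsets of size at least $\delta m$'' in \cref{prop:haddprop1}; since the former family is contained in the latter, the uniform bound transfers with no loss, and the squaring and infimum steps are monotone and hence immediate.
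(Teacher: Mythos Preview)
Your proposal is correct and matches the paper's approach exactly: the paper simply states that taking $\delta = q-\beta$ in \cref{prop:haddprop1} yields the corollary, and your write-up spells out the (trivial) details of that specialization. Your remarks about the ``exactly $(q-\beta)m$'' versus ``at least $(q-\beta)m$'' bookkeeping and the squaring step are the only things to check, and you handle them correctly.
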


Furthermore, we have the following standard bound on $\smax(\mA)$ (see, e.g. \cite{vershynin_2018}, Theorem 4.6.1.):
\begin{theorem}\label{thm:boundonsmax}
Let $\mA \in \mathbb{R}^{m \times n}$ be a random matrix satisfying \cref{def:subgausstype} with constants $K, D$. Then \[
\smax \leq (1 + CK^2)\frac{m}{n}
\]
with probability at least $1 - 2\exp(-cm)$, for some absolute constants $C,c > 0$.
\end{theorem}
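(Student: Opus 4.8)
The plan is to deduce this from a standard concentration bound for sample covariance matrices, once we undo the normalization built into \cref{def:subgausstype}. The rows $\va_i$ of $\mA$ are not isotropic — condition~(2) gives $\E[\va_i\va_i^\top] = \tfrac1n\mI$ — so I would first pass to the rescaled matrix $\mB := \sqrt n\,\mA$. By conditions~(2) and~(3) of \cref{def:subgausstype}, the rows of $\mB$ are independent, mean-zero, isotropic, sub-gaussian random vectors with $\norm{\cdot}_{\Psi_2}\le K$. Since $\sigma_{\max}^2(\mA) = \tfrac1n\,\sigma_{\max}^2(\mB) = \tfrac mn\cdot\bigl\|\tfrac1m\mB^\top\mB\bigr\|$, it suffices to show that $\bigl\|\tfrac1m\mB^\top\mB\bigr\| \le 1 + CK^2$ with probability at least $1-2\exp(-cm)$.

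For this I would invoke (\cite{vershynin_2018}, Theorem~4.6.1) in its sample-covariance form: for an $m\times n$ matrix $\mB$ with independent, mean-zero, isotropic, sub-gaussian rows of $\Psi_2$-norm at most $K$, and for any $t\ge 0$,
\[
\Bigl\|\tfrac1m\mB^\top\mB - \mI_n\Bigr\| \;\le\; K^2\max\{\delta,\delta^2\}, \qquad \delta := C_0\Bigl(\sqrt{\tfrac nm} + \tfrac{t}{\sqrt m}\Bigr),
\]
with probability at least $1 - 2\exp(-t^2)$. Taking $t = \sqrt m$ produces a failure probability of $2\exp(-m)$, of the required form; and in the tall regime $m\gtrsim n$ relevant here (indeed \cref{thm:subgcase} assumes $m/n$ large), we have $\delta \le 2C_0$, a bare constant, so $\max\{\delta,\delta^2\}\le C_1$. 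Hence $\bigl\|\tfrac1m\mB^\top\mB\bigr\| \le 1 + C_1K^2$ on this event, and multiplying by $m/n$ gives $\smax = \sigma_{\max}^2(\mA) \le (1 + CK^2)\tfrac mn$ with probability at least $1-2\exp(-m)$, as claimed. I note that choosing this covariance form of the theorem (rather than the two-sided singular-value form) is what keeps the final constant genuinely of the shape $1+CK^2$ rather than a quartic in $K$.

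There is no real obstacle beyond bookkeeping, but the one step that genuinely needs care — and the reason one cannot simply quote a crude estimate — is the rescaling: applying any row-norm bound directly to $\mA$, whose rows are unit vectors, yields only the deterministic but far too weak $\smax \le \norm{\mA}_F^2 = m$, overshooting the target by a full factor of $n$; working with $\mB$ instead exploits the near-orthogonality of isotropic sub-gaussian rows. If one preferred to avoid the citation, the same estimate follows from the textbook $\varepsilon$-net argument: take a $\tfrac14$-net $\mathcal N$ of $S^{n-1}$ with $|\mathcal N|\le 9^n$, bound $\norm{\mB\vx}^2 = \sum_{i\in[m]}\langle\sqrt n\,\va_i,\vx\rangle^2$ for each fixed $\vx\in\mathcal N$ by Bernstein's inequality for sums of independent sub-exponential variables (each summand has mean $1$ by isotropy and sub-exponential norm $O(K^2)$), union-bound over $\mathcal N$ with deviation parameter a large constant multiple of $K^2 m$ — which beats the entropy $n\log 9$ precisely because $m\gtrsim n$ — and conclude with the standard comparison $\sigma_{\max}(\mB) \le 2\max_{\vx\in\mathcal N}\norm{\mB\vx}$.
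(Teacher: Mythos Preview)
Your proposal is correct and matches the paper's approach exactly: the paper does not give a proof but simply cites \cite{vershynin_2018}, Theorem~4.6.1, which is precisely the result you invoke after the rescaling $\mB = \sqrt{n}\,\mA$. Your write-up in fact supplies more detail than the paper does, including the observation that the tall aspect ratio $m \gtrsim n$ (guaranteed in the only place the bound is used, \cref{thm:subgcase}) is what keeps $\delta$ bounded by an absolute constant.
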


Equipped with these, we may conclude \cref{thm:subgcase} directly from Corollary~\ref{cor:mainthmcor}:
\begin{proof}[Proof of \cref{thm:subgcase}]
The restriction on $\beta$ given in \cref{eq:betacondition}, and the optimal step size $\alpha$ given in \cref{eq:subgalpha}, follow immediately from plugging the estimates on $\smin, \smax$ (given in \cref{prop:haddprop1} and \cref{thm:boundonsmax} respectively) into the corresponding restriction and optimal step size formulae found in \cref{cor:mainthmcor}.

For the convergence result, we may similarly apply estimates of $\smax$ and $\smin$ to the convergence guarantee in \cref{cor:mainthmcor}. Using these, with probability at least $1 - 2\exp(-cm) - 3\exp(-C_2(q-\beta)m) \geq 1 - c_3\exp(-c_q m)$ we have that 
\begin{align*}
1 - \frac{(1-\epsilon)^2 \smin}{\smax - \epsilon(2 - \epsilon)\smin} &\leq 1 - \frac{(1-\epsilon)^2 \smin}{\smax} \\
& \leq 1 - \frac{(1-\epsilon)^2\frac{(q-\beta)^3 m}{(24D)^2 n}}{(1+CK^2) \frac{m}{n}} \\
&= 1 - \frac{(1-\epsilon)^2 (q-\beta)^3}{(1+CK^2)(24D)^2} \\
&= 1 - C_q.
\end{align*}
Now, $c_q$ and $C_q$ are absolute constants depending only on $q$, so we have that for any $k$,
\[
\norm{\xk - \xopt}^2 \leq (1 - C_q)^k \norm{\xinit - \xopt}^2,
\]
as claimed.

\end{proof}

\begin{remark}\cref{thm:subgcase} shows that in the subgaussian setting, QuantileABK enjoys a speedup by a factor of $n$ over QuantileRK (recall \cref{thm:qrkconv}). This is, heuristically, due to the fact that averaging the projections onto many rows yields a direction vector that points more directly towards $\xopt$ than any single projection. Hence, one may take a much larger step size, of order $n$ in the subgaussian setting. The convergence rate then enjoys a corresponding increase of the same order. We note that the optimal step size is likely closely related to the coherence of the matrix, in that larger step sizes may be used for matrices with nearly orthogonal rows (such as those of subgaussian-type). This is because the coherence, in some sense, determines how much information about the location of $\xopt$ can be obtained from a block of rows. We explore and comment on this phenomenon further in \cref{sec:experiments}.
\end{remark}

\section{Experimental Results}
\label{sec:experiments}

We divide our experiments into two main sections. We first present results using our method as presented in \cref{alg:QBRK}, including determining the optimal step size, exploring robustness with respect to the quantile parameter, and comparing performance with QuantileRK. We then perform an analysis of a variation of our method, in which only a subset of rows are taken at each iteration and used for computing the quantile and averaged direction vector. This reduces computational cost (at least when it is not possible to compute the full residual in parallel), but yields potentially slower per-iteration convergence. We explore this trade-off experimentally, and believe that our theoretical results may be extended to this method for sufficiently large sample sizes, but leave such theory to future work. Lastly, we also include a demonstration of how a projective block method may not converge in the sparse corruption setting.

\subsection{Results without subsampling}

We begin with our method as presented in \cref{alg:QBRK}. We perform experiments on systems lying on two geometrical extremes: ``Gaussian" systems, where the entries of each row are sampled i.i.d. $\mathrm{N}(0,1)$ and then each row is normalized; and "coherent" systems, where the entries of each row are sampled i.i.d. $\mathrm{Uniform}(0,1)$ and then each row is normalized. These choices are motivated by the fact that the performance of row projection methods such as ours depends heavily on the geometry of the system, in particular the coherence (that is, the pairwise inner products of rows). Gaussian systems are typically highly incoherent, whilst our coherent construction produces highly coherent systems.

We use $m=10000$ rows in all experiments. $\mA$ is constructed to be either "Gaussian" or "coherent" as described, and then $\xopt \in \mathbb{R}^{n}$ is constructed at random with $\mathrm{N}(0,1)$ entries. We then let $\vb = \mA \xopt$. Corruptions are placed uniformly at random and are taken to be of size $\mathrm{Uniform}(-100,100)$, which is large relative to the magnitude of the entries of $\vb$. Other parameters will be specified for each experiment.

Prior to giving plots showing convergence directly, we first conduct experiments to find the optimal choice of $\alpha$ and $q$, and then use these optimal choices for convergence plots and comparisons with QuantileRK.

\subsubsection{Optimal Step Sizes}

We begin by determining the optimal choice of step size $\alpha$ for the systems with $\mA \in \mathbb{R}^{10000 \times n}$, where $n \in \{10, 50, 100, 200, 500\}$. We take $\beta = 0.2$ and $q = 0.7$ and plot the relative error after 10 iterations, $\norm{\xk - \xopt}/\norm{\xinit - \xopt}$, versus $\alpha$. Since convergence is approximately linear, it suffices to run the method for only a few iterations to determine the optimal parameter. For Gaussian systems the optimal step size appears to scale with the number of columns $n$, and we present a scaled $x$-axis to highlight this. The optimal step size is around $1.6n$ to $1.8n$ for each $n$. For coherent systems, however, the step size does not scale in this fashion, and the optimal step size is approximately $2$ for all $n$. This corresponds to the heuristic that more information about the location of $\xopt$ is obtained when rows are more incoherent, and thus a larger step size may be taken. Note that a relative error greater than $1$ indicates that the method will diverge: for Gaussian systems this happens for step sizes roughly larger than $3n$, and for coherent systems divergence occurs for step sizes roughly larger than $2.5$.

\begin{figure}[h]
    \begin{subfigure}[b]{0.49\textwidth}
    \centering
    \includegraphics[width=\linewidth]{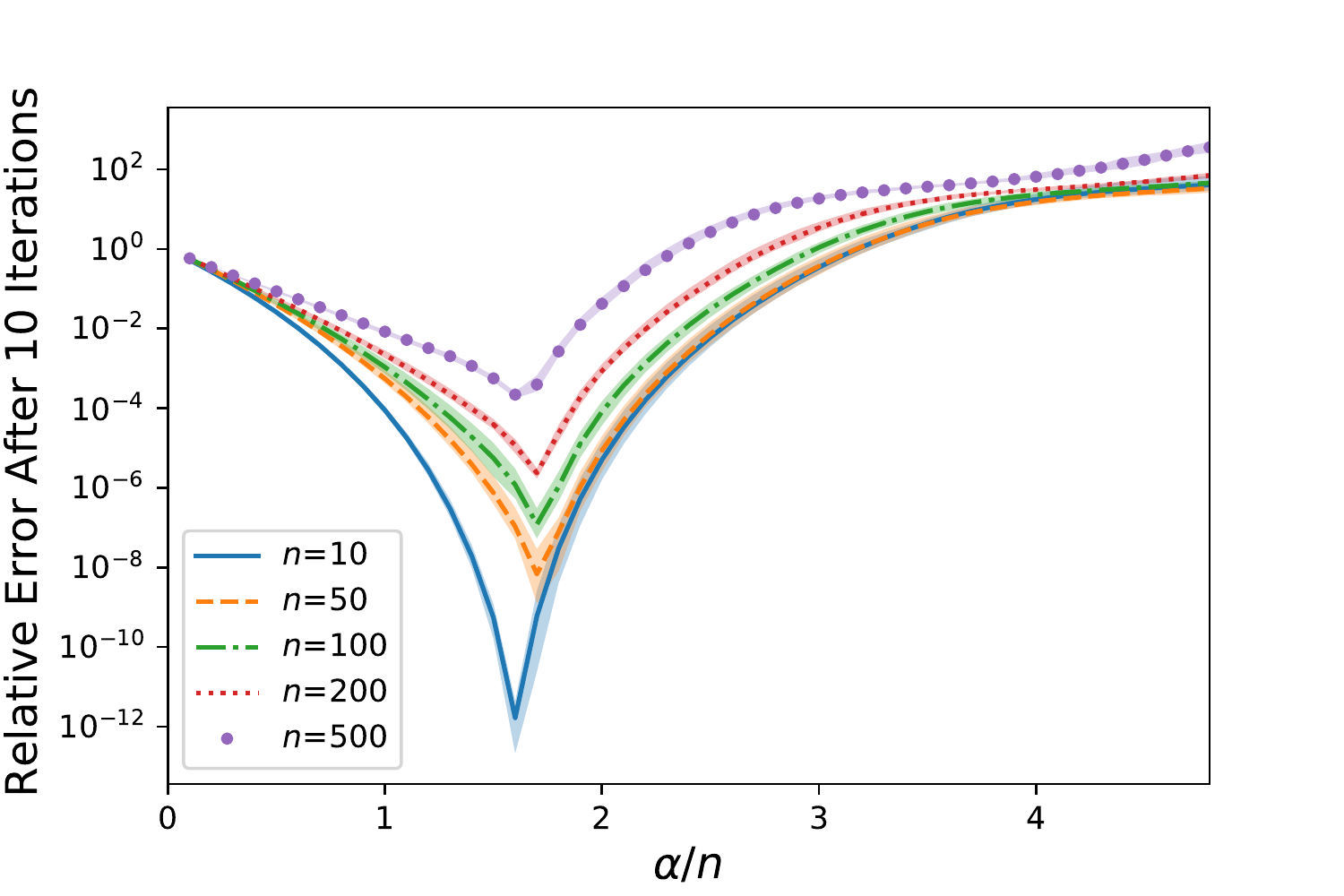}
    \caption{Gaussian systems}
    \label{fig:gauss_stepsize}
    \end{subfigure}
    \begin{subfigure}[b]{0.49\textwidth}
    \centering
    \includegraphics[width=\linewidth]{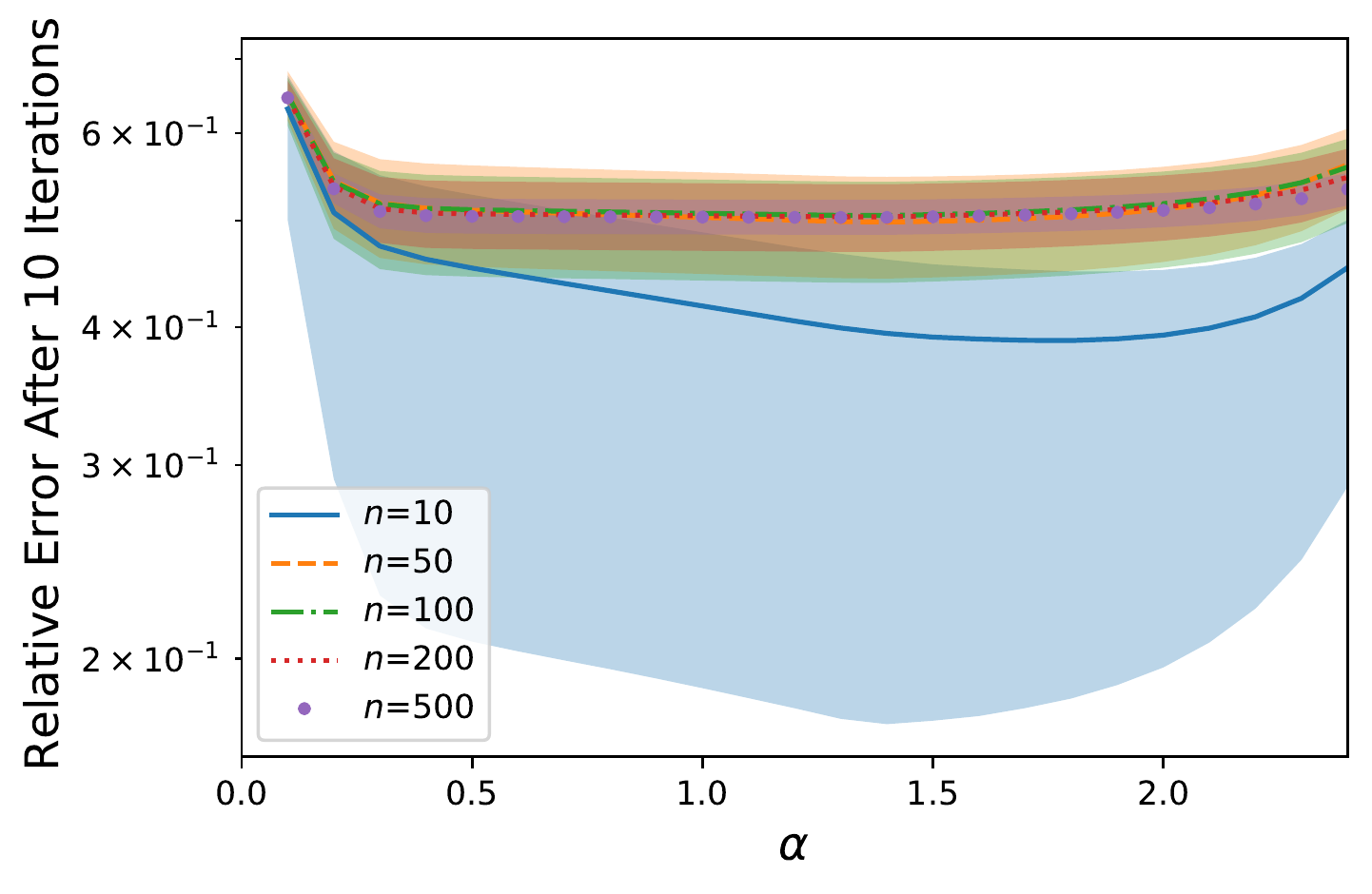}
    \caption{Coherent systems}
    \label{fig:coherent_stepsize}
    \end{subfigure}
    \caption{Relative error after 10 iterations of QuantileABK applied to $10000 \times n$ Gaussian and coherent systems versus step size, for different numbers of columns $n$. Note that in (a) the $x$-axis is scaled.}
    \label{fig:stepsizes}
\end{figure}

\subsubsection{Optimal choice of $q$}

The relative conditions imposed on $q$, $\beta$ in \cref{thm:mainthm} are strict, in the sense that $q$ must in general be much smaller than $1 - \beta$. However, we are able to show in practice that the method is robust even for $q$ very close to $1-\beta$. This is beneficial as taking $q$ to be larger allows for uncorrupted rows with larger residual entries to be used in the averaged projection step, leading to larger movement towards $\xopt$ and consequently accelerated convergence. In \cref{fig:qvsbeta} we take $\mA \in \mathbb{R}^{10000 \times 100}$, $\beta \in \{0.1, 0.2, 0.3, 0.4, 0.5\}$, and vary $q \in (0,1)$. We plot the relative error after $10$ iterations of QuantileABK($q$) with the optimal step size found experimentally as in the previous subsection.

Our results indicate that in both extremes of system geometry, the method is highly robust to $q$ and $q$ may be taken very close to $1-\beta$ before convergence begins to slow or fail entirely. In practice, estimating $\beta$ precisely may be difficult, so one may be more conservative when choosing $q$.

\begin{figure}[H]
    \begin{subfigure}[b]{0.49\textwidth}
    \centering
    \includegraphics[width=\linewidth]{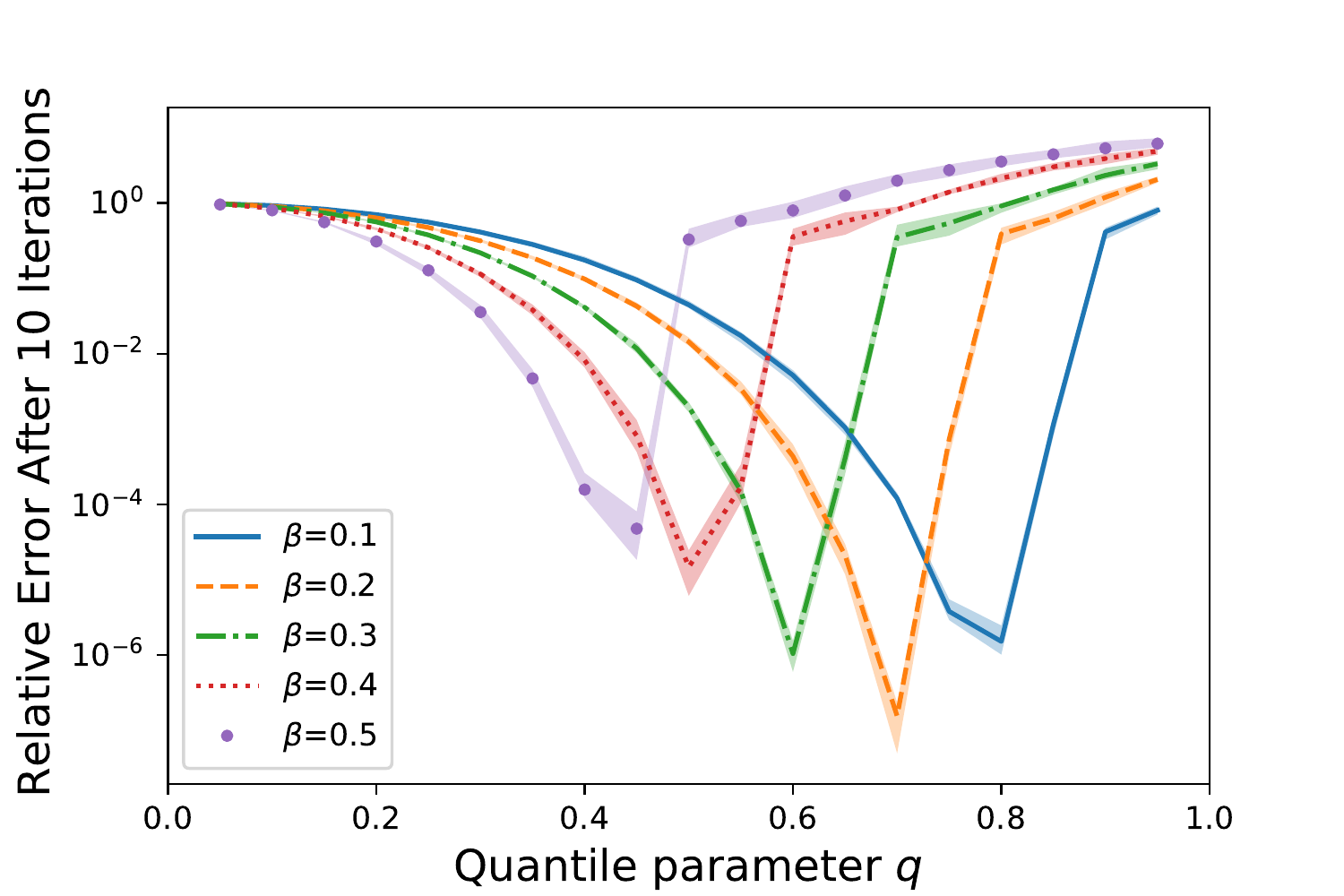}
    \caption{Gaussian system.}\label{fig:gauss_qvsbeta}
    \end{subfigure}
    \begin{subfigure}[b]{0.49\textwidth}
    \centering
    \includegraphics[width = \linewidth]{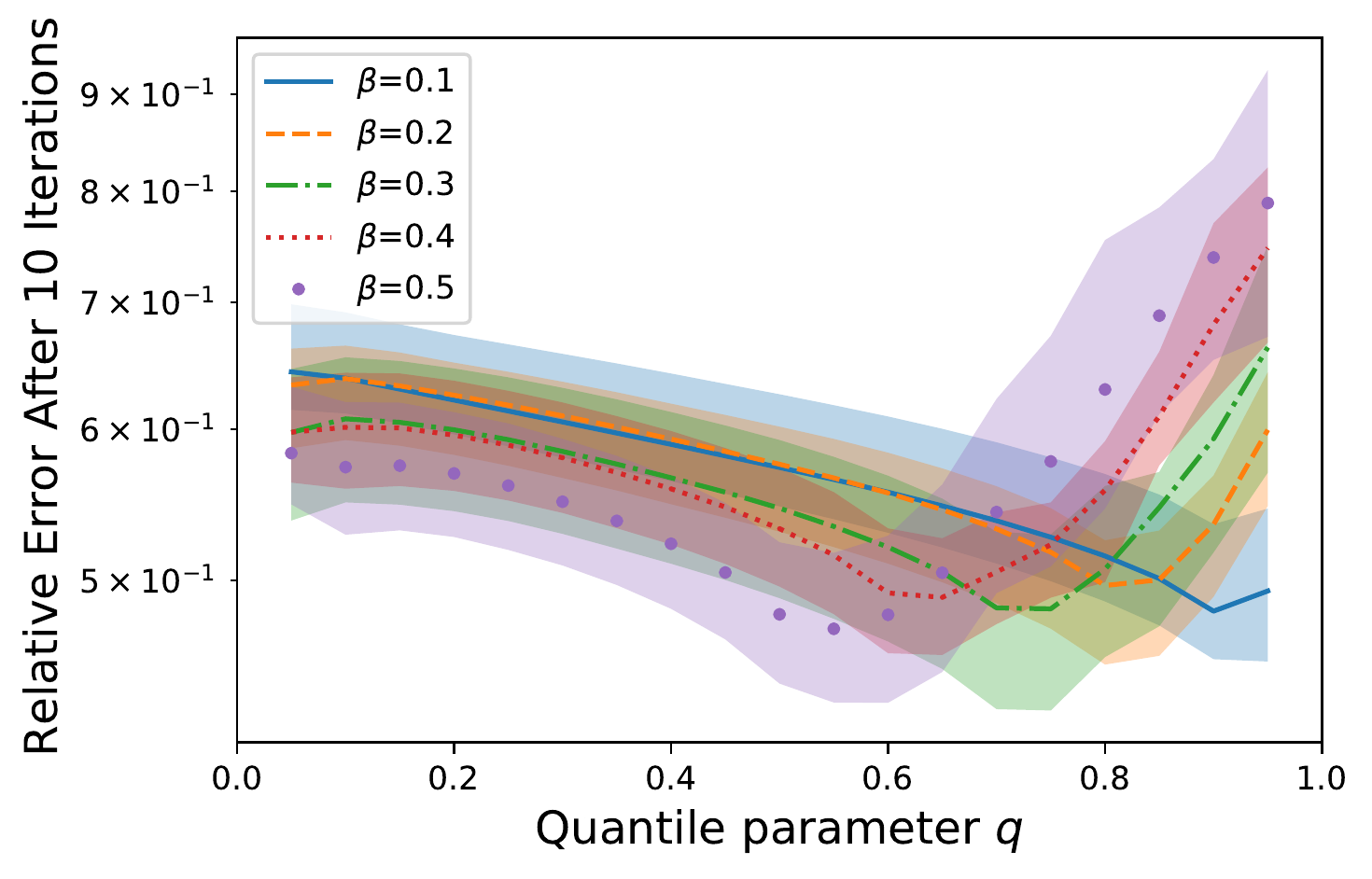}
    \caption{Coherent system.}\label{fig:coherent_qvsbeta}
    \end{subfigure}
    \caption{Relative error after 10 iterations of QuantileABK applied to $10000 \times 100$ Gaussian and coherent systems versus choice of quantile $q$, for a range of corruption rates $\beta$.}
    \label{fig:qvsbeta}
\end{figure}

\subsubsection{Acceleration over QuantileRK}

We compare QuantileABK to QuantileRK on $10000 \times 100$ systems. We take $\beta = 0.2$, $q = 0.7$ and perform 100 iterations of both methods. In \cref{fig:comparison} we plot the relative error of each method versus iteration, and also versus CPU time. It is clear that QuantileABK outperforms QuantileRK significantly in both the Gaussian and coherent settings, on both a per-iteration and temporal basis. We note that the plateau appearing in the Gaussian plots is due to floating point arithmetic limitations.

The plots for the coherent system show an initial sharp drop-off in the relative error before a more steady linear convergence. This is a reflection of an initial large movement when $\xinit$ is projected on the first selected hyperplane(s), and then subsequent small movements from further projections as the incident angles between hyperplanes are small. Note that both methods do converge when applied to the consistent system (but slowly, as suggested by Theorem 1.8, since coherency results in small values of $\sigma^2_{q - \beta, \min}$).

\begin{figure}[H]
  \begin{subfigure}[t]{.49\textwidth}
    \centering
    \includegraphics[width=\linewidth]{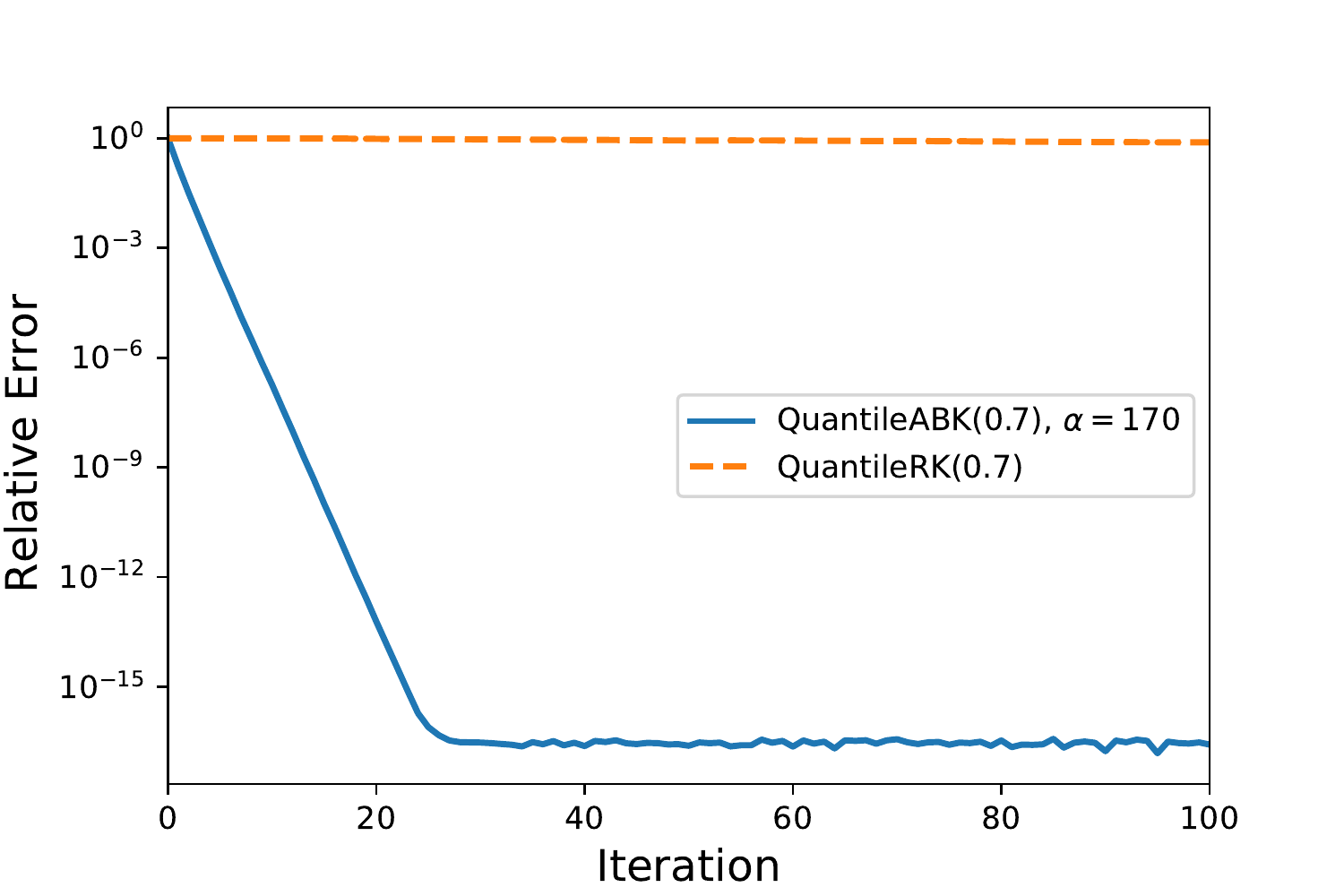}
    \caption{Relative error versus iteration, Gaussian system.}
  \end{subfigure}
  \hfill
  \begin{subfigure}[t]{.49\textwidth}
    \centering
    \includegraphics[width=\linewidth]{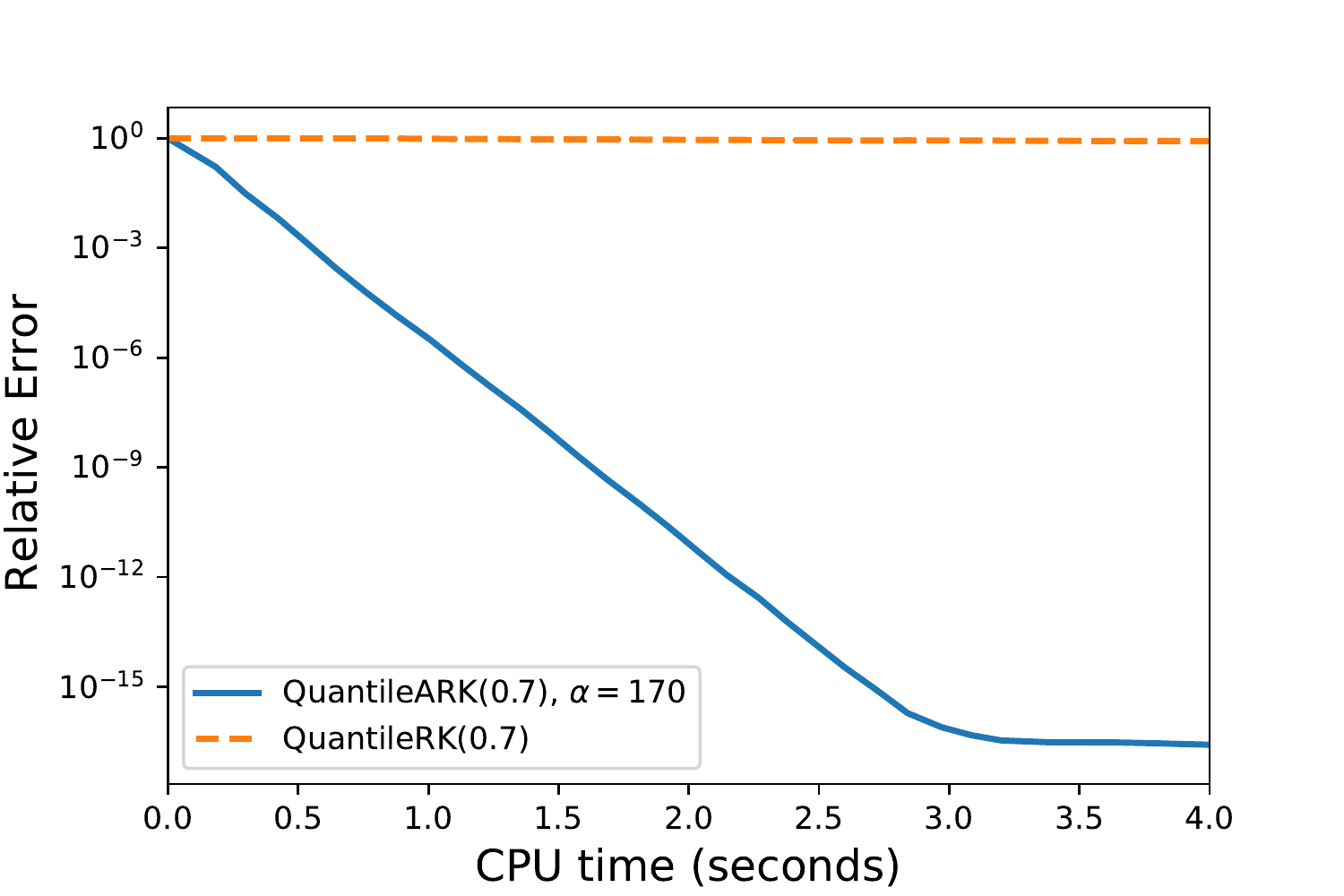}
    \caption{Relative error versus CPU time, Gaussian system.}
  \end{subfigure}

  \medskip

  \begin{subfigure}[t]{.49\textwidth}
    \centering
    \includegraphics[width=\linewidth]{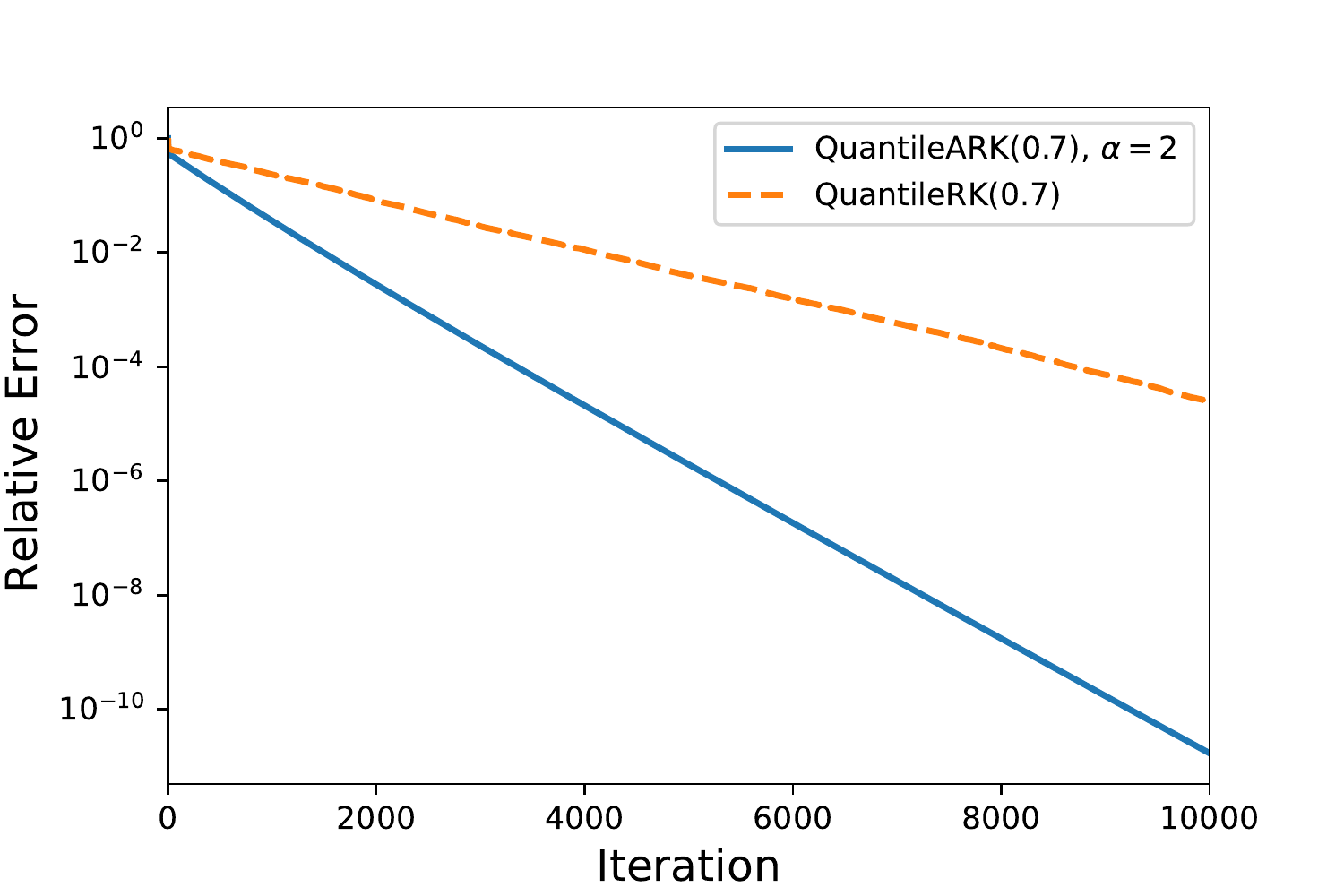}
    \caption{Relative error versus iteration, coherent system.}
  \end{subfigure}
  \hfill
  \begin{subfigure}[t]{.49\textwidth}
    \centering
    \includegraphics[width=\linewidth]{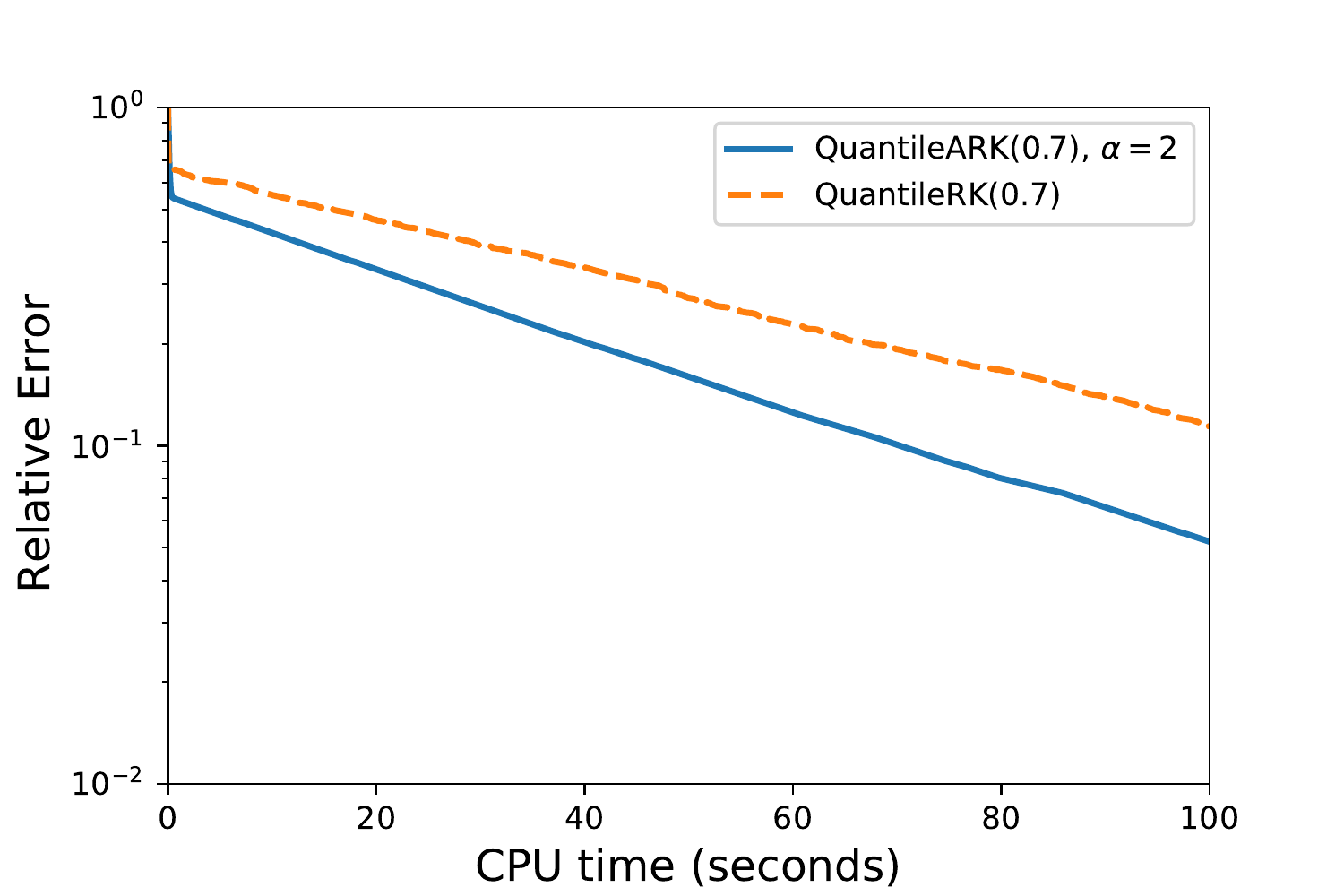}
    \caption{Relative error versus CPU time, coherent system.}
  \end{subfigure}
  \caption{Comparison of QuantileABK to QuantileRK on $10000 \times 100$ Gaussian and coherent systems, with quantile $q = 0.7$, corruption rate $\beta = 0.2$.}\label{fig:comparison}
\end{figure}

\subsection{Results with subsampling}

In this section, we perform experiments using a modification of our method, in which at each iteration only a sampled subset of the residual is computed. At each iteration, $t$ rows are sampled uniformly, the subresidual for that block of rows is computed, and its $q$-quantile is taken. An averaged projection step is then performed using the rows in this block with residual entries below the quantile, with step size $\alpha$. We call this method SampledQABK and give pseudocode in \cref{alg:SQBRK}. This methodology may be of interest when the full residual cannot be computed in parallel, as in this case subsampling can substantially reduce the computational cost (accompanied by a trade-off with the per-iteration convergence rate, as we will show). We note that our theoretical results require sampling the full residual, but we believe that this may be relaxed. 

\begin{algorithm}
	\caption{Sampled Quantile Averaged Block Kaczmarz}\label{alg:SQBRK}	\begin{algorithmic}[1]
		\Procedure{SampledQABK}{$\mA,\vb$, $N$, $q$, $t$, $\alpha$, $\xinit$}
		\For{$k = 1, 2, \ldots, N-1$}
		    \State Sample $i_1, \cdots, i_t \in [m]$ uniformly without replacement
		    \State Compute $Q_q(\vx_{k-1}) = q\textsuperscript{th}\text{ quantile of }\{|\va_i^T \vx_{k-1} - b_i| : i \in \{i_1, \cdots, i_t\}\}$
		    \State Set $\tau = \{i \in \{i_1, \cdots, i_t\} : |\va_i^T \vx_{k-1} - b_i| < Q_q(\vx_{k-1})\}$
    		\State Update $\xk = \vx_{k-1} - \frac{\alpha}{|\tau|}\sum_{i \in \tau}(\va_i^T \vx_{k-1} - b_i)\va_i$
		\EndFor
		\State \Return $\vx_N$
		\EndProcedure
	\end{algorithmic}
\end{algorithm}

Our experimental setup is the same as in the previous section: Gaussian and coherent systems are constructed in the same manner, corruptions are taken $\mathrm{Uniform}(-100,100)$ and placed uniformly at random, and other parameters will be specified for each experiment.

\subsubsection{Optimal Step Sizes}

We again begin by finding the optimal step size experimentally. We fix $10000 \times 100$ Gaussian and coherent systems and take $\beta = 0.2$, $q = 0.7$. We then take sample sizes $t \in \{100, 500, 1000, 5000\}$ and run SampledQABK for $10$ iterations for a range of step sizes $\alpha$, and present our results in \cref{fig:sampled_stepsizes}. As mentioned previously, the method converges linearly and so it is sufficient to run it for only a few iterations for comparison purposes. We note that the method was unstable or did not converge for $t < 100$, which is a consequence of the method being unable to accurately distinguish corrupted and uncorrupted rows given such a small sample.

\begin{figure}[H]
  \begin{subfigure}[t]{.49\textwidth}
    \centering
    \includegraphics[width=\linewidth]{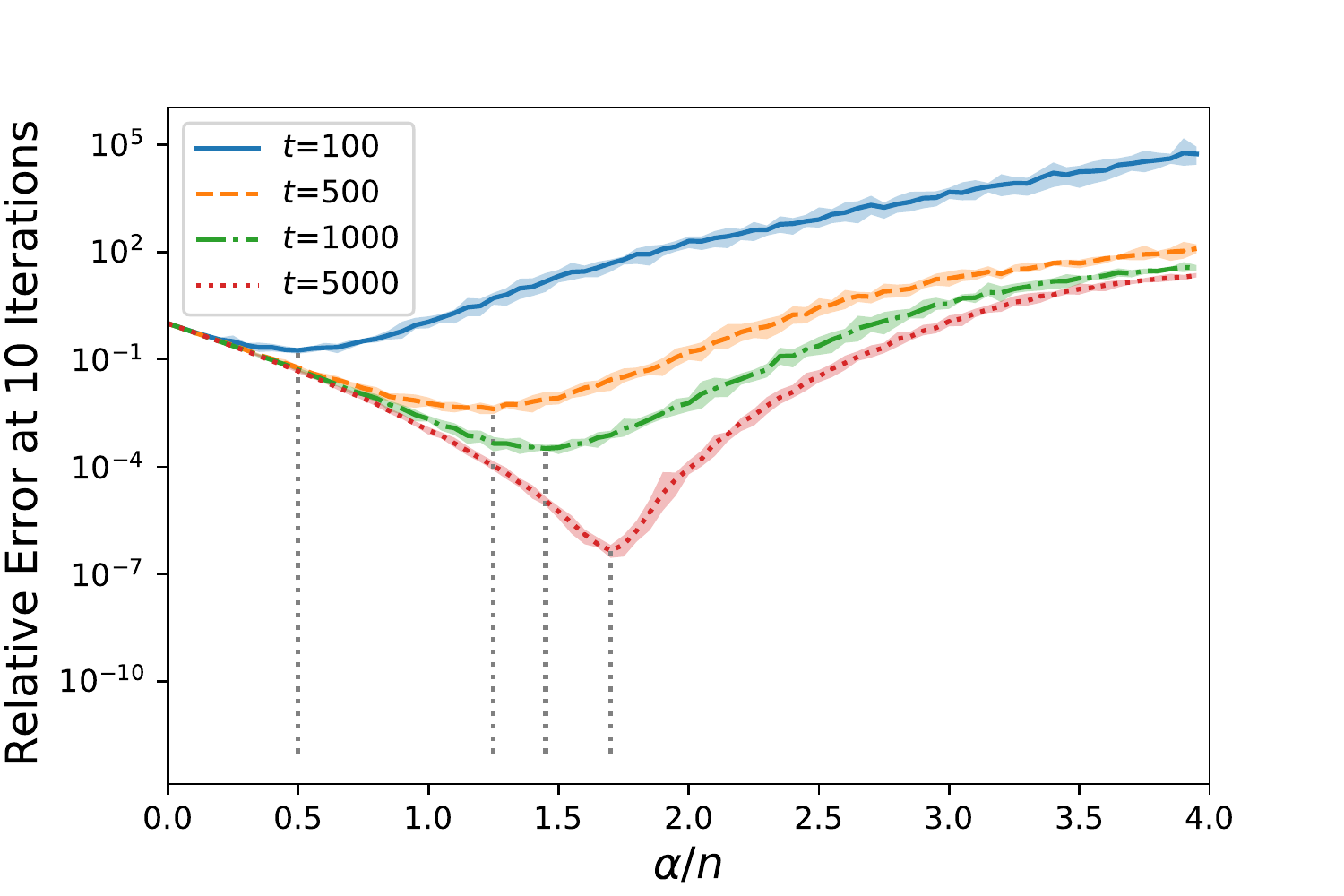}
    \caption{Gaussian system.}
  \end{subfigure}
  \hfill
  \begin{subfigure}[t]{.49\textwidth}
    \centering
    \includegraphics[width=\linewidth]{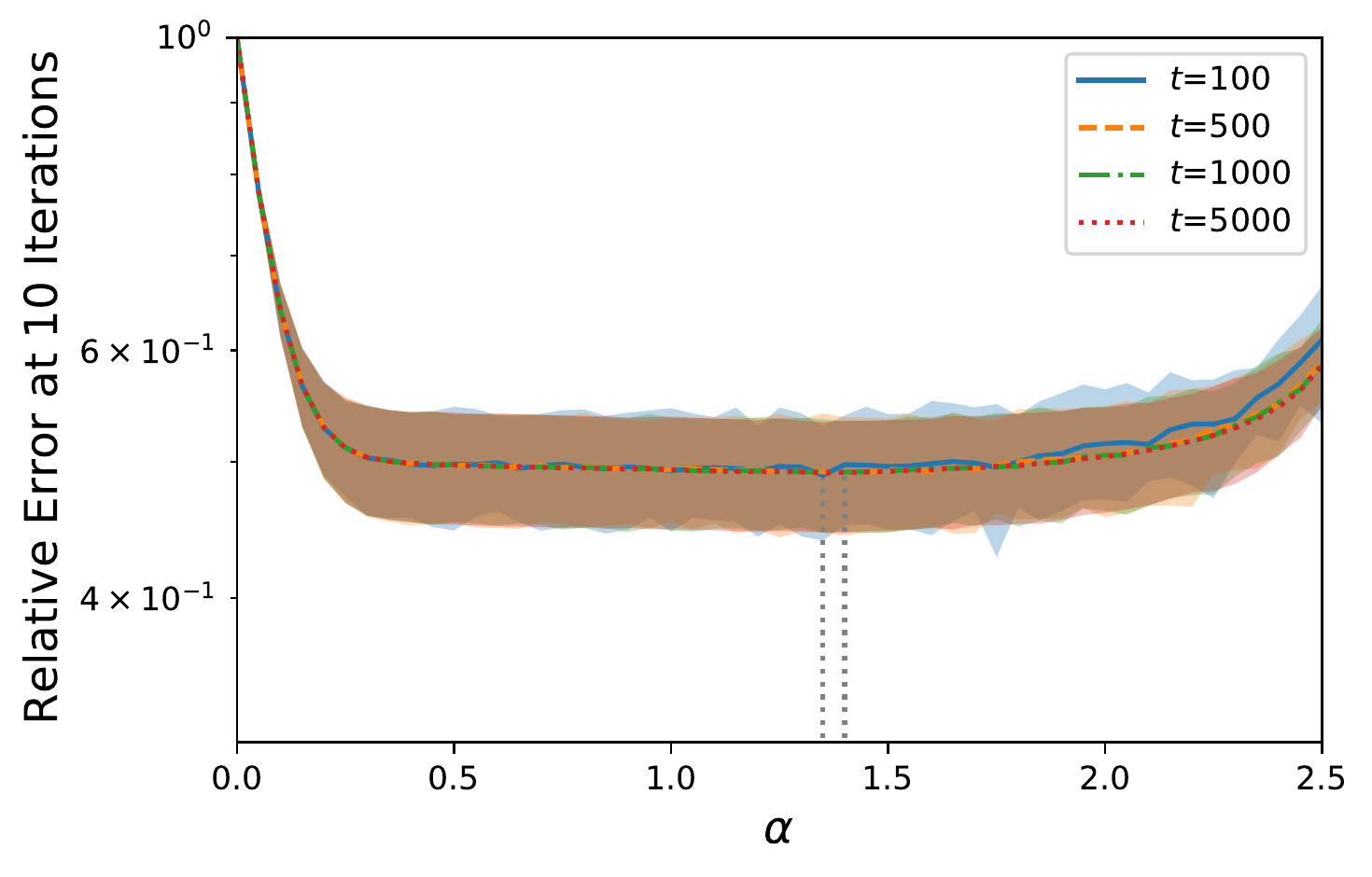}
    \caption{Coherent system.}
  \end{subfigure}
  \caption{Relative error after 10 iterations of SampledQABK versus step size $\alpha$ on $10000 \times 100$ Gaussian and coherent systems, for a range of sample sizes $t$, quantile parameter $q=0.7$, corruption rate $\beta = 0.2$. Note the scaling of the $x$-axis in (a).}\label{fig:sampled_stepsizes}
\end{figure}

We observe that for the Gaussian system, the optimal step size is again on the order of $n$, and that the method becomes more sensitive to the choice of step size as the sample size $t$ increases (that is, the 'valleys' at the optima become sharper). This may be explained by the heuristic that the amount of information that may be obtained from a block of rows (and in turn, the step size that may be taken) is limited by the rank of the matrix, which in the Gaussian case is $n$ almost surely. Hence, significant increases in sample size do not yield corresponding increases in the optimal step size.

For the coherent system, however, we see that the behavior is almost exactly the same across sample sizes. Again, the optimal choice of $\alpha$ is roughly constant, and there is little further information to leverage from taking larger sample sizes.

\subsubsection{Effect of Sample Size on Convergence}

We proceed to compare the convergence of SampledQABK with a variety of sample sizes. In \cref{fig:sampled_convergence} we include plots of the relative error versus iteration and versus CPU time, for both Gaussian and coherent systems. We compare sample sizes $t \in \{100, 500, 1000, 5000\}$, and use the optimal step sizes found in the previous section (for simplicity, we take $\alpha = 1.4$ for all sample sizes for the coherent system). We present our results in \cref{fig:sampled_convergence}.

For the Gaussian system, we see that taking a larger sample size greatly improves the per-iteration convergence. However, when computation time is taken into account, there is a clear trade-off: for example, $t=500$ converges much faster in terms of CPU time than $t = 5000$.

For the coherent system, we see that on a per-iteration basis there is essentially no difference between different sample sizes. The first iteration provides an initial jump, and then convergence proceeds much more slowly than the Gaussian case. Indeed, there is no trade-off between per-iteration convergence and computational cost in this case: subsampling greatly improves convergence over CPU time. We see that taking $t=100$ is significantly faster than any other sample size. In this case, $t$ should be taken as small as possible while still achieving convergence, and as noted previously we observed that convergence fails for $t < 100$.

\begin{figure}[H]
  \begin{subfigure}[t]{.49\textwidth}
    \centering
    \includegraphics[width=\linewidth]{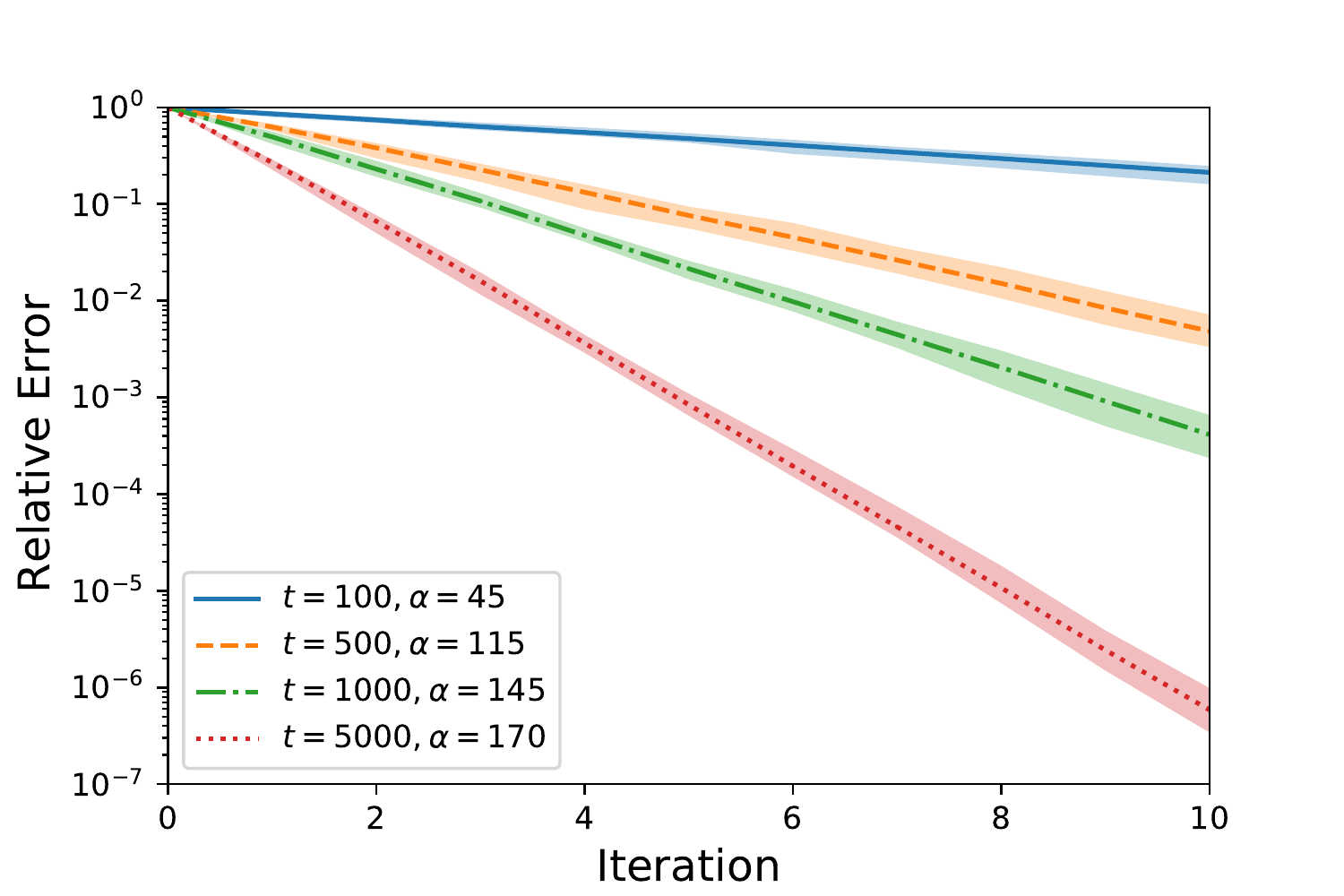}
    \caption{Relative error versus iteration, Gaussian system.}
  \end{subfigure}
  \hfill
  \begin{subfigure}[t]{.49\textwidth}
    \centering
    \includegraphics[width=\linewidth]{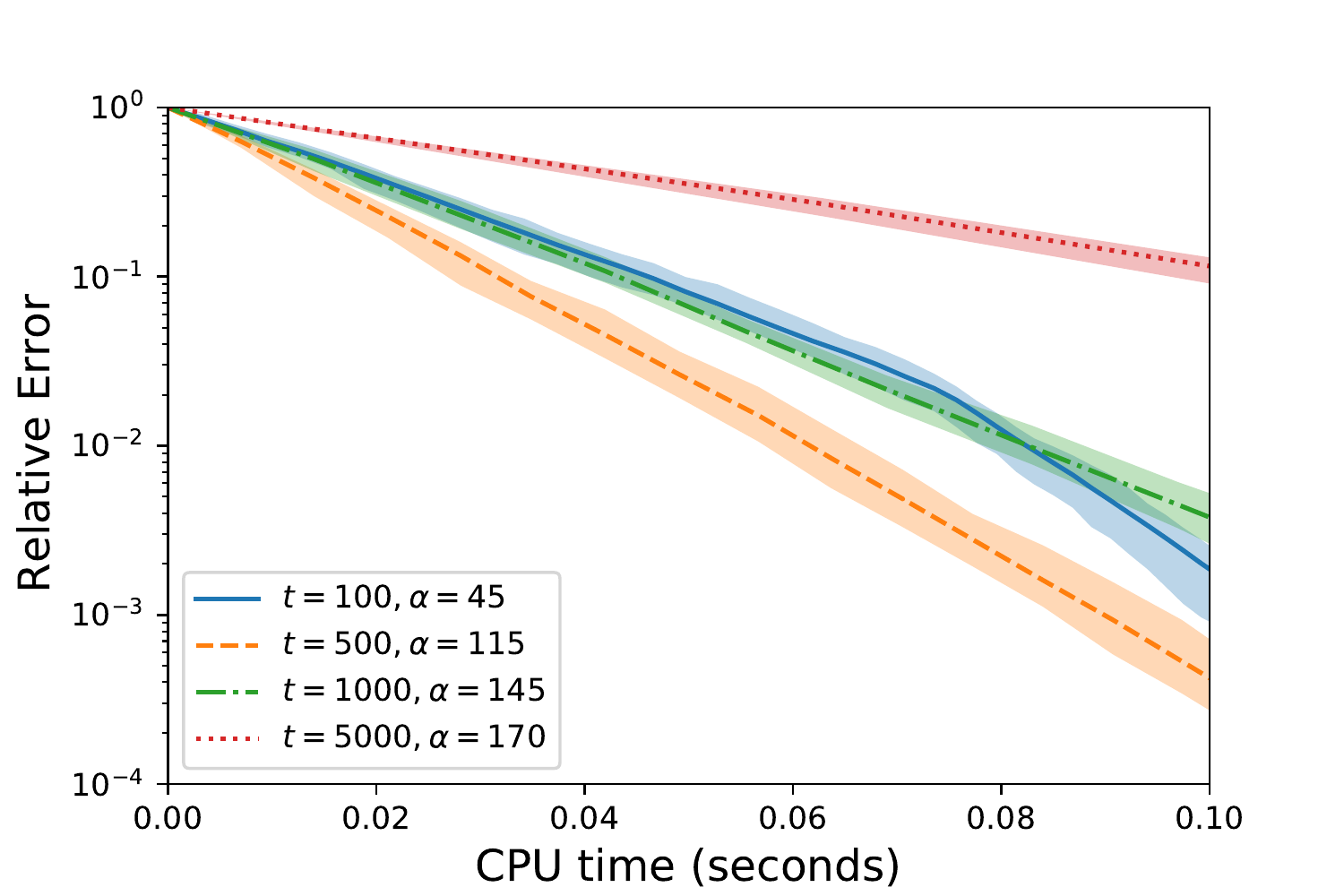}
    \caption{Relative error versus CPU time, Gaussian system.}
  \end{subfigure}

  \medskip

  \begin{subfigure}[t]{.49\textwidth}
    \centering
    \includegraphics[width=\linewidth]{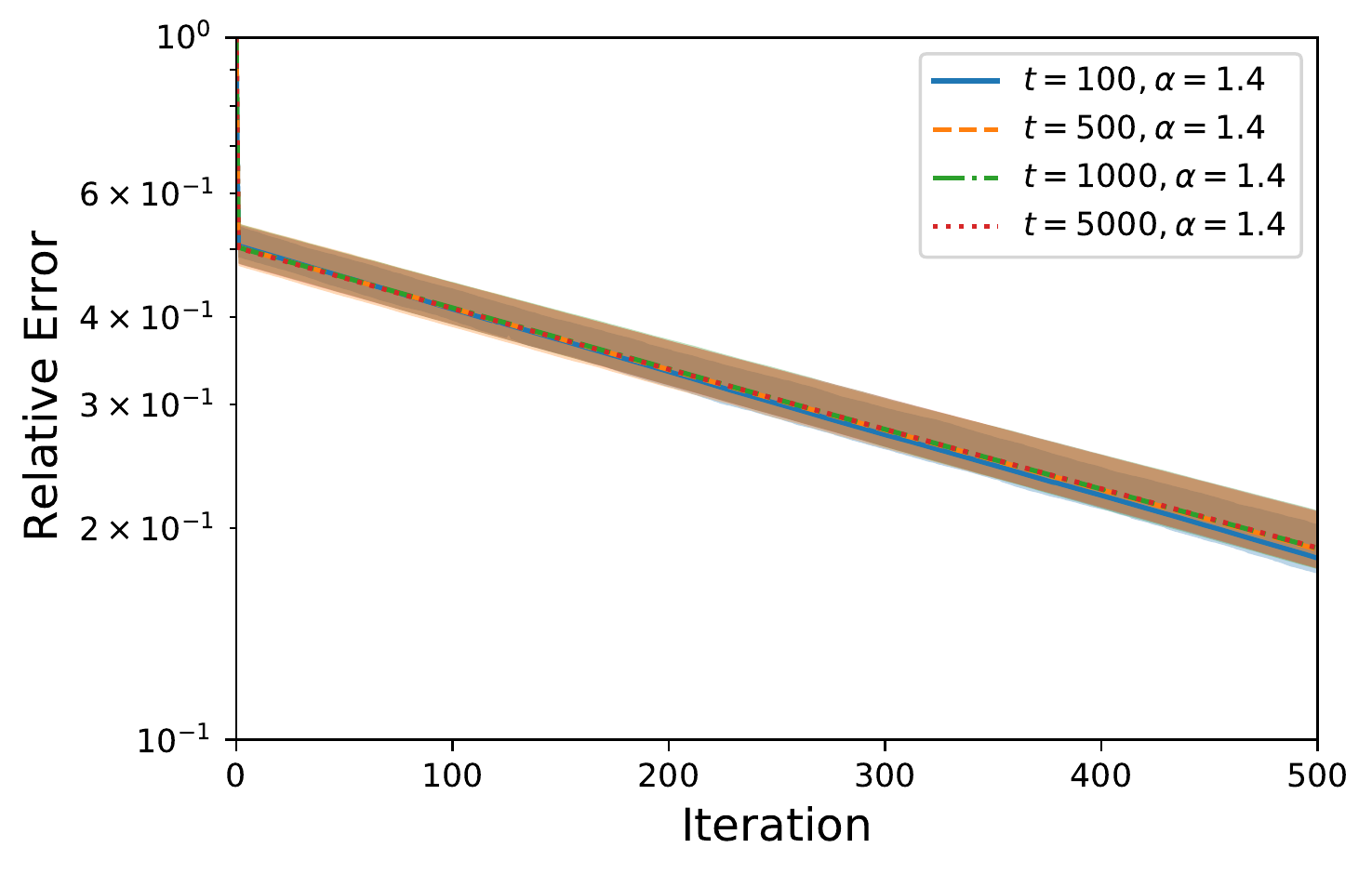}
    \caption{Relative error versus iteration, coherent system.}
  \end{subfigure}
  \hfill
  \begin{subfigure}[t]{.49\textwidth}
    \centering
    \includegraphics[width=\linewidth]{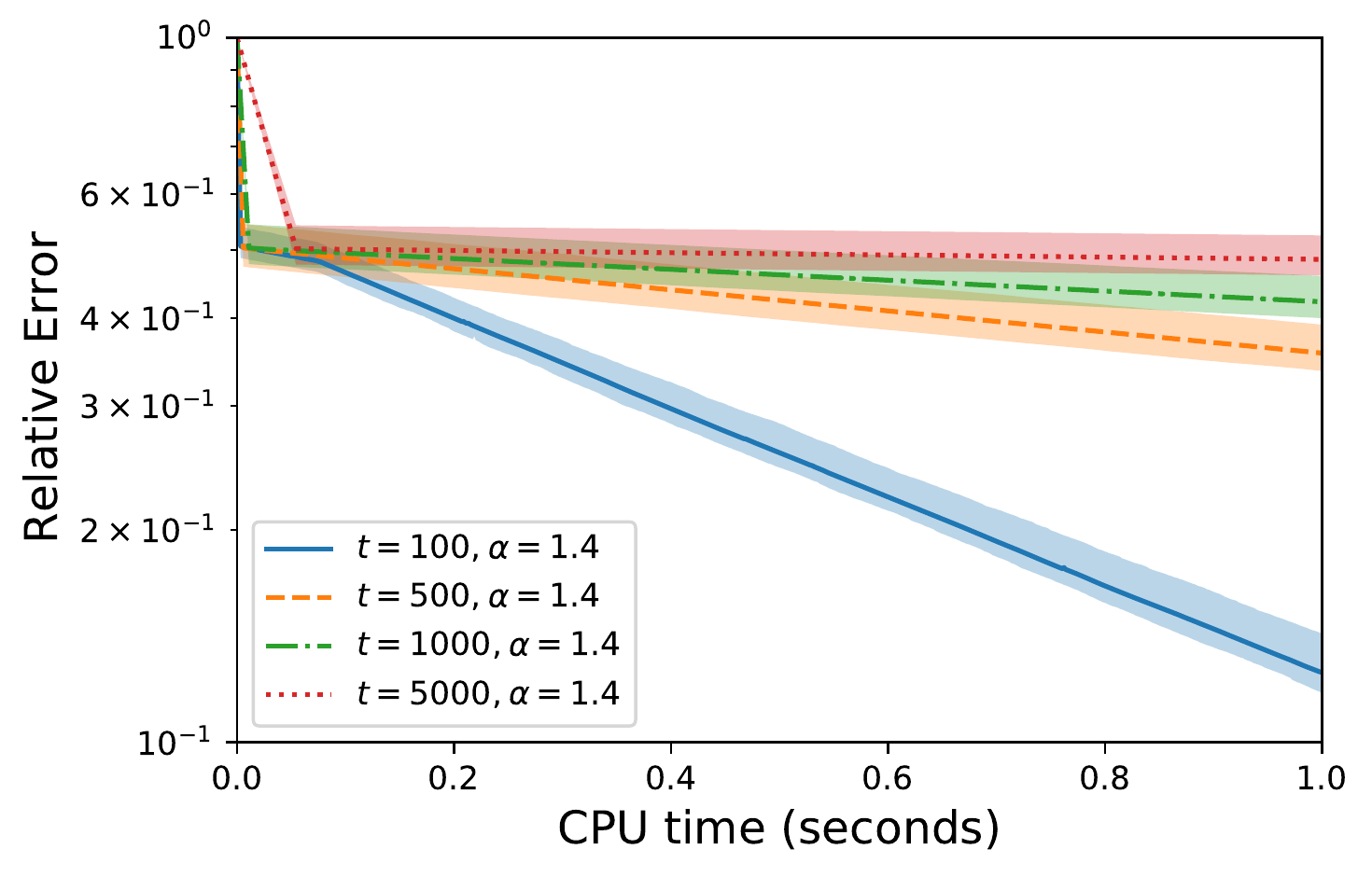}
    \caption{Relative error versus CPU time, coherent system.}
  \end{subfigure}
  \caption{Convergence of SampledQABK for a range of sample sizes $t$.}\label{fig:sampled_convergence}
\end{figure}

\subsection{Projective vs Averaged}\label{s:proj_vs_adv}
In this section, we give an experiment to support earlier discussion in \cref{s:block-k} regarding averaged versus projective block variants. We argued that in projective block methods, where iterates are projected onto the intersection of the hyperplanes corresponding to an entire block of rows, the presence of even a single corrupted row in each block can prevent convergence. To illustrate this we first give a natural quantile-based block Kaczmarz variant, QuantilePBK, in \cref{alg:QPBK}. Similar to QuantileABK, at each iteration, a quantile of the residual is taken, and then the previous iterate is projected onto the intersection of the hyperplanes of every row with residual entry beneath the quantile.

\begin{algorithm}[H]
	\caption{Quantile Projective Block Kaczmarz}\label{alg:QPBK}
	\begin{algorithmic}[1]
		\Procedure{QuantilePBK}{$\mA,\vb$, $N$, $q$, $\xinit$}
		\For{$k = 1, 2, \ldots, N-1$}
		    \State Compute $Q_q(\vx_{k-1}) = \text{q\textsuperscript{th} quantile of }\{|\va_i^T \vx_{k-1} - b_i| : i \in [m]\}$
		    \State Set $\tau = \{i \in [m] : |\va_i^T \vx_{k-1} - b_i| < Q_q(\vx_{k-1})\}$
		    \State Update {$\vx_k = \vx_{k-1} + \mA_{\tau}^{\dagger}(\vb_{\tau} - \mA_{\tau}\vx_{k-1})$}
		\EndFor
		\State \Return $\vx_N$
		\EndProcedure
	\end{algorithmic}
\end{algorithm}

 We construct an example to demonstrate how QuantilePBK may fail as follows. We construct a matrix $\mA \in \mathbb{R}^{1250 \times 100}$, where $1000$ rows are sampled by taking i.i.d. $N(0,1)$ entries and then normalizing, and where $250$ rows are identical copies of one further Gaussian row. A solution vector $\xopt$ is then constructed with i.i.d $N(0,1)$ entries. Then, the $250$ identical rows have their entries in $b$ corrupted in order to all equal $500$, given $250$ identical full rows in the system. Denoting one such row by $\va^\top \vx = 500$, the initial iterate $\xinit$ is then taken to be $\xinit = (500 - \va^\top\mathbf{1})\va^\top$, i.e., the projection of the vector of all ones $\mathbf{1}$ onto the hyperplane $\{ \va^\top \vx = 500 \}$. Under these choices, the iterates will always lie in this (corrupted) hyperplane, hence these rows will always have residual entry zero. This ensures they always pass the quantile test, and ensures that QuantilePBK cannot converge. We note that even taking projections on smaller sub-blocks of the accepted index set $\tau$ won't improve robustness, as long as each block contains one of the corrupted rows. So, in the worst case, even $250$ blocks of $5$ equations each can be such that the iterates never leave the corrupted hyperplane.

We perform QuantilePBK and QuantileABK on this system with $q = 0.7$, initial iterate as described above, and for QuantileABK a step size of $\alpha = 10$. In \cref{fig:projection_vs_averaged}, we plot the relative error versus iteration, and indeed observe as expected that QuantilePBK fails to converge, whilst QuantileABK continues to enjoy linear convergence even in this adversarial setting.

\begin{figure}[H]
    \centering
    \includegraphics[width=0.5\textwidth]{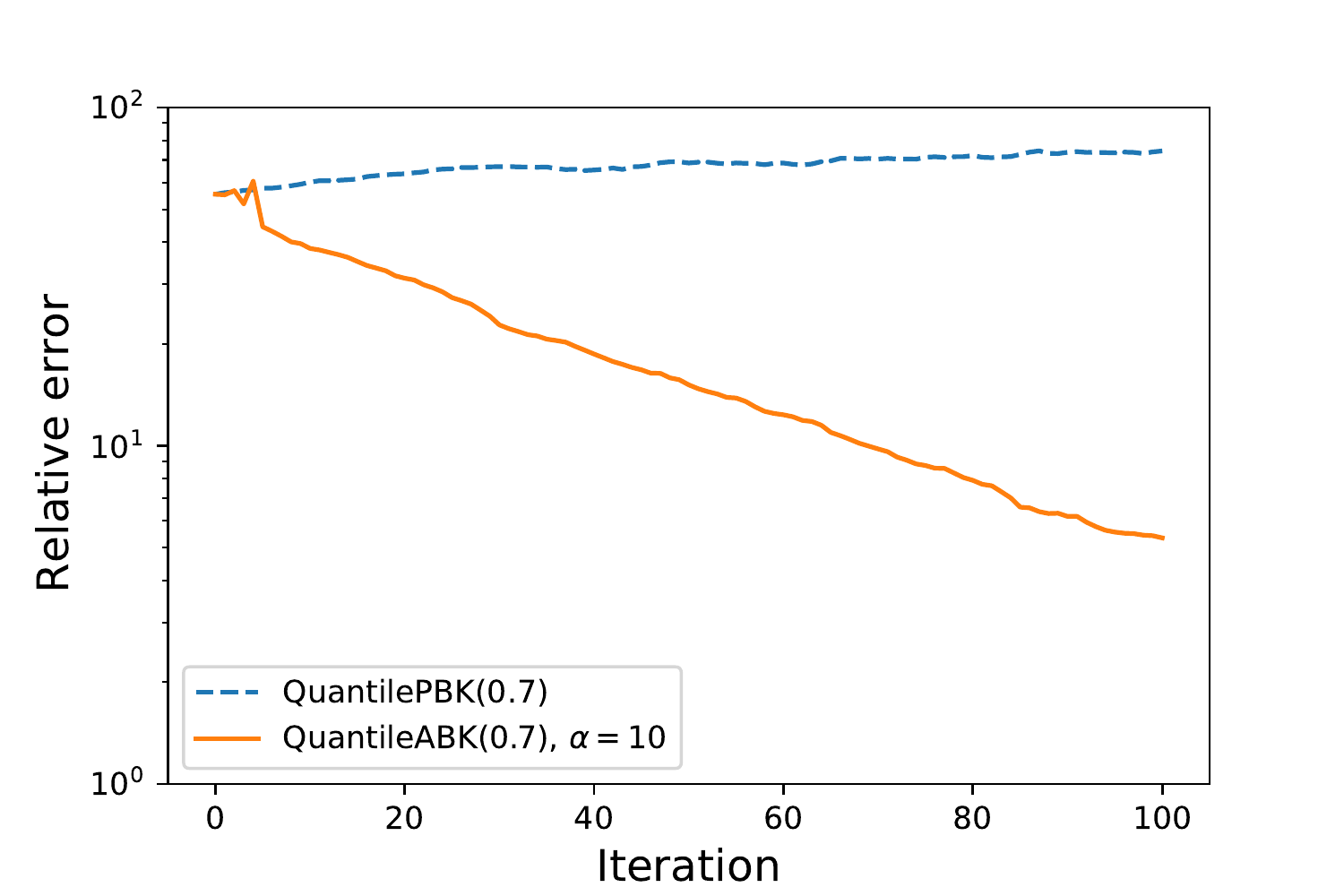}
    \caption{Relative Error versus iteration, QuantilePBK versus QuantileABK.}
    \label{fig:projection_vs_averaged}
\end{figure}

\section{Conclusion}\label{sec:conclusion}

In this work, we propose a novel method, QuantileABK, for solving large-scale systems of equations that suffer from arbitrarily large, but sparse, corruptions in the measurement vector. This sparse corruption model arises in a wide range of applications, including sensor networks, computerized tomography, and many problems in distributed computing, and finding methods that are able to detect and avoid these corruptions has been a popular recent problem.

Our method combines an averaged blocking technique, that has experienced recent popularity in related literature, with the use of a quantile of the residual at each iteration. This provides a large acceleration over the preeminent existing method for this setting, QuantileRK, by leveraging far more information from the computed residual at each iteration.

We prove that our method enjoys linear convergence under certain conditions on the quantile parameter $q$, and the fraction of corruption rates $\beta$, for all matrices such that the uniform smallest singular value over all row-submatrices with at least $(q-\beta)m$ rows is positive, i.e. $\smin > 0$. Notably, our results place no restriction on the size or (potentially adversarial) placement of corruptions. We show theoretically and experimentally that our method converges faster than QuantileRK. In particular by specializing to the case of a matrix of subgaussian-type, we are able to quantify this speed-up more precisely, and show that our method converges faster than QuantileRK by a factor of $n$ (the number of columns of the system). Whilst this speed up is per-iteration, both methods require computing the full residual at each iteration, so the per-iteration computational cost is of the same order.

Experimentally, we show that our method significantly outperforms QuantileRK, by iteration and by CPU time. We provide experiments on both geometric extremes (that is, matrices with nearly parallel rows and matrices with nearly orthogonal rows), and demonstrate the scaling behavior of the optimal step size in these cases, as well as direct performance comparisons to QuantileRK, in which the increase in convergence rate is clear. We also introduce a variant of our method that uses only a subsample of rows at each iteration, and provide step size and convergence results for a range of sample sizes.

As future work, we propose that there is still further information to be gained from the residual. In particular, we believe that historical residual information may be used to estimate the likelihood of a row being corrupted. That is, if one row's residual entries are continually greater than the quantile threshold, then that row is more likely to be corrupted than others. This could potentially then be used to reduce the number of corrupted rows that are deemed acceptable for projection at each iteration.

\section{Acknowledgements} The authors are grateful to Jackie Lok and anonymous reviewers for the valuable comments improving the presentation of the paper.

\printbibliography
\end{document}